\newcommand{\e}{\epsilon}
\newcommand{\im}{\mathrm{i}\,}
\newcommand\restr[2]{{
  \left.\kern-\nulldelimiterspace 
  #1 
  \vphantom{\big|} 
  \right|_{#2} 
  }}
\theoremstyle{plain}
\newtheorem{lem}{Lemma}
\newtheorem{teo}[lem]{Theorem}
\newtheorem{prop}[lem]{Proposition}
\theoremstyle{definition}
\newtheorem{rmk}[lem]{Remark}
\newcommand{\vet}[2]{\left[\footnotesize{\begin{matrix}#1 \\ #2 \end{matrix}}\right]}
\newcommand{\uno}{\mathrm{Id}}
\newcommand{\bR}{\mathbb{R}}
\newcommand{\bT}{\mathbb{T}}
\newcommand{\bZ}{\mathbb{Z}}
\newcommand{\bZd}{\mathbb{Z}^d}
\newcommand{\bN}{\mathbb{N}}
\newcommand{\bC}{\mathbb{C}}
\newcommand{\rdj}{\mathrm{\dj}}
\newcommand{\cL}{\mathcal{L}}
\newcommand{\de}{\mathrm{d}}
\newcommand{\pa}{\partial}
\newcommand{\cH}{\mathcal{H}}
\newcommand{\cF}{\mathcal{F}}
\numberwithin{equation}{section}
\title{\bf On the analyticity of the \\ 
Dirichlet-Neumann operator 
and  
Stokes waves
}
\begin{document}

 \author{Massimiliano Berti, Alberto Maspero, Paolo Ventura}

\date{}

\maketitle

\qquad \qquad \qquad \qquad \qquad \qquad \qquad 
\qquad
\qquad \qquad \qquad \quad {\it To the memory of Antonio Ambrosetti} 

\bigskip

\medskip 
\noindent
{\bf Abstract:} We  prove an analyticity result for the Dirichlet-Neumann operator 
under space periodic boundary conditions  in any dimension in an unbounded domain 
 with infinite depth. 
We derive an analytic  bifurcation result of  analytic 
Stokes waves --i.e. space periodic traveling solutions-- of 
the 
water waves equations in deep water. 
\\[1mm]
{\it MSC 2020:} 76B15, 35B32, 35J05.  

 \tableofcontents

\section{Introduction and main results}

The Dirichlet-Neumann operator plays an important role in fluid mechanics, for example in the Craig-Sulem-Zhakarov \cite{CS,Zak1} 
formulation of the water waves equations (cfr. Section \ref{sec:stokes}),    
and in several other branches of analysis, as in 
the theory of inverse problems. Roughly speaking it  is defined as the linear 
operator  which maps 
the Dirichlet datum of a harmonic function in a 
given domain into the normal derivative at its boundary 
 (Neumann datum). 
The Dirichlet-Neumann operator is nonlinear with respect to the boundary of the domain. 
In view of many applications it is important 
to determine 
its regularity in different function spaces. 

Several results about the analyticity of the Dirichlet-Neumann operator 
acting between Sobolev spaces,  with respect to the variation of the boundary, 
have been proved, starting with the pioneering works of Coifmann-Meyer \cite{CM}, Craig-Schanz-Sulem 
\cite{CSS}, Craig-Nicholls \cite{CN} and Lannes \cite{LannesLivre} where we
refer for an extended bibliography.  
We also mention the deep analysis of the Dirichlet-Neumann operator in \cite{AIM,AD,ABZ2,Wu1}, on which we will comment later.

\smallskip
The major 
aim of this paper is to prove a further analyticity result for the Dirichlet-Neumann operator 
$ G(\eta ) $ defined in \eqref{DN1}
on the unbounded domain $ \bT^d \times \{ y \leq \eta (x) \} $, where 
$ \bT^d := (\bR / 2 \pi \bZ)^d $ is the standard  $ d$-dimensional flat 
torus, in any space dimension $ d \geq 1  $. 
Assuming that $\eta (x) $ is analytic, we prove in 
Theorem \ref{thm:DN}  the analyticity of the 
map  $ \eta \mapsto G(\eta) $ acting between suitable spaces of analytic 
periodic functions. The delicate point of this result 
is that $ \eta $ and $ \psi $ are assumed to 
have the {\it same} regularity (if  
$ \eta $ is more regular than $ \psi $ the result is simpler). 
Following  Lannes \cite{Lannes,LannesLivre} and Alazard-Burq-Zuily \cite{ABZ}
we make use of a regularizing diffeomorphism 
to flatten the domain to the half cylinder,  in which the transformed 
harmonic function solves  a perturbed elliptic equation. Then 
the proof relies on a perturbative approach to invert the transformed Laplacian 
over suitable spaces of functions $ u(x,y) $ which are analytic in $ x $, with Sobolev 
regularity in $ y $ and 
decay to zero as $ y \to -\infty $, cfr. \eqref{spaziok}.  
The key step is obtain  linear elliptic regularity estimates for the 
Poisson equation in these spaces, see Lemma \ref{lem:ell5}. 
Then the elliptic estimates for the modified problem are obtained  
by a perturbative argument 
 differently from  \cite{ABZ}. 

\smallskip

As a consequence of  Theorem \ref{thm:DN}, 
we derive an analytic bifurcation result of analytic Stokes waves --i.e. space periodic traveling solutions, which look stationary in a  moving frame with constant speed-- of  the pure gravity  water waves equations in infinite depth, see Theorem \ref{LeviCivita}. 
Existence of traveling waves which are constant in one space dimension, i.e.  
are $ 1$-dimensional waves, 
dates back to  classical works of 
Levi-Civita \cite{LC}, Nekrasov \cite{Nek} and Struik \cite{Struik},
 in the twenties of the last century.  Then Lewy \cite{Lewy} proved
that a traveling wave  which is at least $ C^1 $
is actually analytic. Theorem \ref{LeviCivita} 
proves in addition that small amplitude  Stokes waves depend 
analytically on the amplitude taking values in a space of analytic functions. 
In finite depth and with  surface tension, a result of this kind 
is proved in Nicholls-Reitich \cite{NR} by a power series expansion approach. 

In this paper we deduce Theorem \ref{LeviCivita}  by the analytic 
Crandall-Rabinowitz 
bifurcation theorem from a simple eigenvalue, as presented in
the book of Ambrosetti-Prodi \cite{AP}, thanks to the analytic estimates 
of the Dirichlet-Neumann operator obtained in Theorem 
\ref{thm:DN}.

\smallskip

In addition to their interests per se --traveling waves have
 fundamental importance in fluid mechanics--, 
 these results have  been used in the study of the
Benjiamin-Feir instability of the  Stokes waves in \cite{BMV}.

\smallskip

We now state precisely our results. Along this paper we use the following notation.
We denote the spatial variables by 
$ (x,y)  \in \bT^d \times \bR $, $ d \geq 1 $,  where $ \bT^d := (\bR/ 2 \pi \bZ)^d $ 
is  the standard flat torus.
 The symbol $\nabla$ denotes the gradient
$$
\nabla := (\pa_{x_j})_{j=1, \ldots, d} \qquad \text{and} 
\qquad \Delta := \sum_{j=1}^d \pa_{x_j}^2 \, , 
\quad \Delta_{x,y} := \Delta + \pa_y^2 \, . 
$$
 A dot will denote the standard scalar product in $\bR^d$.
Moreover 
$ \bN := \{1, 2, \ldots \} $ and $ \bN_0 = \{0\} \cup \bN  $.

\subsection{Dirichlet-Neumann operator}

We consider the cylindrical domain 
\begin{equation}\label{Deta}
{\cal D}_\eta := \big\{  (x,y) \in \bT^d \times \bR \ : \ y < \eta (x) \big\} \, , \quad d \geq 1  \, , 
\end{equation}
delimited by the graph $ \partial {\cal D}_\eta =  \{ y = \eta (x) \} $ of a periodic function $ \eta (x) $, and, given  
a periodic Dirichlet datum   
$ \psi (x) $,  we consider the unique harmonic function $\Phi(x,y)$ 
solving the system
\begin{equation}\label{Phi}
\begin{cases}
\Delta_{x,y} \Phi = 0  & \mbox{ in } {	\cal D}_\eta \\  
\Phi(x, y ) = \psi(x) & \mbox{ at } y = \eta(x) \\
\pa_y \Phi(x,y) \to 0 & \mbox{ as } y \to - \infty \, .
\end{cases}
\end{equation}
The Dirichlet-Neumann operator $G(\eta) $ is then defined as the linear operator
\begin{equation}
\label{DN1}
\begin{aligned}
[G(\eta) \psi](x) & := \sqrt{1+|\nabla \eta|^2} \,  \pa_n \Phi_{|y = \eta (x) } \\
& = (\pa_y \Phi) (x, \eta(x)) - \nabla \eta(x) \cdot (\nabla \Phi)(x, \eta(x))
\end{aligned}
\end{equation}
where $ n $ denotes the exterior normal 
$$ 
n :=  \frac{1}{\sqrt{1 + |\nabla \eta |^2}} \vet{- \nabla \eta}{1}  \, , 
\quad 
\pa_n := \frac{1}{\sqrt{1 + |\nabla \eta |^2}}(\pa_y - \nabla \eta \cdot \nabla ) \, . 
$$ 
The reason of the name 
``Dirichlet-Neumann"  is that the operator $G(\eta) $  maps the Dirichlet datum $ \psi (x) $ of 
the harmonic function  $ \Phi (x,y) $  into the (normalized) normal derivative  
$ \partial_n \Phi $ 
 at the 
boundary $ \partial {\cal D}_\eta = \{ y = \eta (x) \} $ (Neumann datum). 

\begin{rmk}
In \eqref{Phi}  it is equivalent to require the boundary condition 
$ \nabla \Phi (x,y) \to 0 $
as $ y \to - \infty $, see Remark \ref{remdecay}. Actually 
$ \nabla \Phi (x,y)  $ decays to zero exponentially fast as $ y \to - \infty $. 
\end{rmk}

Simple algebraic properties of the Dirichlet-Neumann operator are recalled in Appendix \ref{app:DN}.

Since Calderon it is known that 
the Dirichlet-Neumann operator $ G(\eta) $ is, if $ \eta $ is a $ C^\infty $ function, 
a classical pseudo-differential operator, elliptic of order $ 1 $, with an asymptotic expansion
in classical decreasing symbols. 
For the flat surface $ \eta (x) = 0 $, the Dirichlet-Neumann operator is the Fourier multiplier 
$$
G(0) = |D| = (-\Delta)^{\frac12} 
$$
as follows by the elementary calculus \eqref{ell3}. 
In space dimension $ d = 1 $ 
the Dirichlet-Neumann operator is equal to $|D|$ up to infinitely many 
times regularizing operators, see e.g.  \cite{BM,BBHM}.
If $ \eta (x) $ has a finite smoothness, Lannes \cite{Lannes,LannesLivre} 
proved  an analogous expansion in symbols  with finite smothness.  

The Dirichlet-Neumann operator is a nonlinear map with respect to the boundary 
of the domain
$ \partial {\cal D}_\eta $.  
The analytic dependence with respect to $ \eta $
of the Dirichlet-Neumann operator $ \eta \mapsto G(\eta )$ 
has been first established in the two dimensional setting by Coifman-Meyer 
\cite{CM}, and in the three dimensional setting by
Craig, Schanz and Sulem \cite{CSS}, showing 
that, if  $ \eta \in C^{k+1} $, $ \psi \in H^{k+1}$, $ k \in \bN $, then
$ G(\eta)[\psi] \in H^k $ is analytic in  $C^{k+1} \cap \{ \| \eta \|_{C^1}< r \} $ for $r$  sufficiently small.

In view of of application to water waves 
Craig-Nicholls \cite{CN}, Wu 
\cite{Wu1,Wu2},  and Lannes \cite{Lannes,LannesLivre} proved, 
 with different approaches, that if 
 $ \eta, \psi $ have the same Sobolev regularity $H^s$
then $  G(\eta) [\psi ] \in H^{s-1} $.  
In particular Lannes proved tame estimates using {\it regularizing} diffeomorphisms
to  straighten the domain. 

The paralinearization of $ G(\eta) \psi $, which enables to prove optimal
estimates for the action of the Dirichlet-Neumann operator, has been obtained in 
Alazard-Metivier \cite{AIM}, 
Alazard-Delort \cite{AD},  
and Alazard-Burq-Zuily \cite{ABZ1,ABZ2} 
in rough domains, using a variational analysis to construct the solution 
and applying elliptic regularity theory. 
The paralinearization of  the Dirichlet-Neumann operator in $ d = 1 $
with a multilinear expansion in $ \eta $ is proved in Berti-Delort \cite{BD}, by using a
paradifferential parametrix \`a la Boutet de Monvel.   

Finally we mention the work of Alazard-Burq-Zuily \cite{ABZ}
for the study of the Dirichlet-Neumann operator acting in  analytic function spaces,
making use of a regularizing diffeomorphism as in \cite{Lannes}, variational methods and
 elliptic regularity analysis.

\smallskip

In this paper we prove an analyticity result  (Theorem \ref{thm:DN})
for the Dirichlet-Neumann map $ \eta \mapsto  G(\eta) \psi $ in 
the cylindrical domain $ {\cal D}_\eta $ defined in \eqref{Deta}, 
acting between spaces of periodic analytic functions defined in \eqref{def:Hsigmas} below.
We suppose that the functions  $\eta$ and $\psi$ 
belong to the spaces of   periodic 
functions
\begin{equation} \label{def:Hsigmas}
H^{\sigma, s} := H^{\sigma, s}(\bT^d):= 
\Big\{ u (x) = \sum_{k \in \bZd}  u_k e^{\im k \cdot x} \, : \ \  
\| u \|_{H^{\sigma, s}}^2 := \sum_{k \in \bZd}  e^{2\sigma |k|_1}  \langle k \rangle^{2s}
\, |  u_k|^2 < \infty  \Big\}
\end{equation}
where,  for any $ k = (k_1, \ldots, k_d) \in \bZ^d $, we set 
$$
|k|_1 := |k_1|+\dots+|k_d| \, , \quad
\langle k \rangle := \max(1, |k|) \, , \quad 
| k| := \Big( \sum_{j=1}^d k_j^2 \Big)^{1/2} \, .
$$
Clearly, if the dimension $ d = 1 $ then $ |k| = |k |_1 $. 

If $ \sigma = 0 $ the space $ H^{0, s}  $ is the usual Sobolev space $ H^s $. 
If  $ \sigma > 0 $, a periodic function $ u (x) $ belongs to  
$H^{\sigma, s}(\bT^d)$, if and only if 
it admits an analytic extension in the strip $ |y|_{\infty} 
:= \max\{ |y_1|, \ldots, |y_d| \} < \sigma $ and 
the traces at the boundaries $ u( \cdot  + \im y ) $,  $ |y|_\infty = \sigma $,  
belong to   the Sobolev space $H^s := H^s(\bT^d)$.  
In  Appendix 
\ref{App1A} we prove this characterization, together with the property that 
 the spaces $ H^{\sigma,s} $ 
form, for $ s > d/ 2 $, an algebra with respect to the product of functions
and satisfy tame estimates.
\smallskip

The main result of this section is the following theorem. 

Let $ B^{\sigma, s}(r) $ denote the open ball in
$ H^{\sigma,s} $ of center $ 0 $ and radius $ r > 0  $. 
\begin{teo}\label{thm:DN}
{\bf (Dirichlet-Neumann operator)}
Let $\sigma\geq 0$ and $s $, $ s_0$ 
such that $s+\frac12,\, s_0 \in \bN $,  
and  $s-\frac32\geq s_0 > \frac{d+1}{2} $. Then there exists $\e_0 :=\e_0(s)>0$ such that 
the Dirichlet-Neumann operator map\footnote{$H^{\sigma, s}\cap B^{\sigma, s_0}(\e_0) $ is an open set in the $H^{\sigma,s}$ topology. } 	
$$
\eta \mapsto G(\eta) \, , \quad 
H^{\sigma, s}\cap B^{\sigma, s_0+\frac32}(\e_0)  \to \cL(H^{\sigma, s}, H^{\sigma, s-1}) \, , 
$$ 
is analytic and fulfills the tame estimate 
\begin{equation}\label{tameGeta}
\| G(\eta)\psi \|_{H^{\sigma,s-1}} \leq C(s)\big( \|\psi\|_{H^{\sigma,s}} +\|\eta \|_{H^{\sigma,s}}\|\psi\|_{H^{\sigma,s_0+\frac32}}  \big) \, .   
\end{equation}
\end{teo}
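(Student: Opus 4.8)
The plan is to follow the Lannes--Alazard--Burq--Zuily strategy of flattening the domain $\cD_\eta$ to the half-cylinder $\bT^d\times\{y<0\}$ by a regularizing diffeomorphism, solve the resulting perturbed elliptic problem by a fixed-point argument, and then read off the analyticity and the tame estimate from the analytic-implicit-function machinery. First I would choose the change of variables $(x,y)\mapsto (x, y+\chi(y)\eta(x))$ where $\chi$ is a suitable cutoff; because $\eta$ is lifted through a smoothing operator (the Poisson-type kernel regularizing in the $x$-variable and localizing in $y$), its $y$-dependent extension gains the regularity needed so that the transformed function loses no derivatives with respect to $\eta$. Under this diffeomorphism the Laplacian $\Delta_{x,y}$ becomes an operator of the form $\Delta_{x,y}+\cR(\eta)$, where $\cR(\eta)$ is a second-order operator whose coefficients are small in the $H^{\sigma,s_0+1/2}$-type norm when $\|\eta\|_{H^{\sigma,s_0+3/2}}$ is small, and depend polynomially (hence analytically) on $\eta$ and its first derivatives after the smoothing. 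The transformed harmonic function $u(x,y)$ then solves $(\Delta_{x,y}+\cR(\eta))u=0$ in $\bT^d\times\{y<0\}$ with $u(x,0)=\psi(x)$ and decay as $y\to-\infty$.

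The next step is to invert the model operator. In the spaces $H^{\sigma,s}$ the flat problem $\Delta_{x,y}u_0=0$, $u_0(\cdot,0)=\psi$ is solved explicitly by $u_0(x,y)=\sum_k \psi_k e^{|k|y}e^{\im k\cdot x}$, and one checks directly that this lies in the anisotropic space of functions analytic in $x$ with Sobolev regularity in $y$ and exponential decay — this is exactly the content one needs from Lemma~\ref{lem:ell5}, the elliptic estimate for the Poisson equation $\Delta_{x,y}v=F$ with zero boundary data. I would then write $u=u_0+v$, so that $v$ solves $\Delta_{x,y}v=-\cR(\eta)(u_0+v)$ with homogeneous Dirichlet data; applying the right inverse $\Delta_{x,y}^{-1}$ from Lemma~\ref{lem:ell5} turns this into a fixed-point equation $v=-\Delta_{x,y}^{-1}\cR(\eta)(u_0+v)$. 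Since $\Delta_{x,y}^{-1}$ gains two $y$-derivatives (and preserves the $\sigma$-analytic weight) while $\cR(\eta)$ is second order with small coefficients, the map $v\mapsto -\Delta_{x,y}^{-1}\cR(\eta)(u_0+v)$ is a contraction on a ball of the appropriate anisotropic space for $\|\eta\|_{H^{\sigma,s_0+3/2}}<\e_0$, which produces a unique solution $u$ depending analytically on $(\eta,\psi)$ — analyticity because the fixed-point map is built from bounded multilinear operations in $\eta$ composed with the fixed linear operator $\Delta_{x,y}^{-1}$, and the analytic implicit function theorem applies. The tame estimate \eqref{tameGeta} comes out of the same iteration: each application of $\cR(\eta)$ contributes either a small factor $\|\eta\|_{H^{\sigma,s_0+3/2}}$ (absorbed) or one factor $\|\eta\|_{H^{\sigma,s}}$ at the top order multiplied by a low norm of the rest, using the algebra and tame product estimates for $H^{\sigma,s}$ recalled in Appendix~\ref{App1A}.

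Finally, $G(\eta)\psi$ is recovered from $u$ by the formula \eqref{DN1}, i.e. by taking $\pa_y$ and $\nabla$ traces at $y=0$ after undoing the diffeomorphism; since $u$ is analytic in $x$ with $H^{s}$-regularity in $y$ and controlled decay, the traces of $\pa_y u$ and $\nabla u$ at $y=0$ land in $H^{\sigma,s-1}$, the composition with the (polynomial, analytic) Jacobian factors of the diffeomorphism stays in $H^{\sigma,s-1}$ by the algebra property, and the bound \eqref{tameGeta} follows. The map $\eta\mapsto G(\eta)$ is then analytic as a composition of the analytic solution map $\eta\mapsto u$ with the bounded linear trace-and-recombine operation, and the required open set $H^{\sigma,s}\cap B^{\sigma,s_0+3/2}(\e_0)$ is precisely the domain on which the contraction works. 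The main obstacle I expect is the bookkeeping in the anisotropic spaces: one must set up the function space for $u(x,y)$ — analytic in $x$ with the weight $e^{\sigma|k|_1}$, $H^{s}$ in $y$ on $(-\infty,0)$, with exponential decay — so that (i) the flat inverse $\Delta_{x,y}^{-1}$ genuinely gains two $y$-derivatives uniformly in the analytic weight (this is where the Fourier-side computation $|k|^{-2}$ meets the decay in $y$, carefully handling the $k=0$ mode), and (ii) the perturbation $\cR(\eta)$, after the smoothing diffeomorphism, maps this space to itself losing at most what $\Delta_{x,y}^{-1}$ regains, with the crucial gain of a small factor from $\|\eta\|_{H^{\sigma,s_0+3/2}}$. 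Getting the diffeomorphism's smoothing to interact correctly with the analytic weight $e^{\sigma|k|_1}$ — so that no analyticity radius is lost — is the delicate technical heart, and is exactly why the proof is organized around the linear elliptic estimate of Lemma~\ref{lem:ell5} followed by a perturbative argument rather than the variational route of \cite{ABZ}.
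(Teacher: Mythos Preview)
Your overall strategy matches the paper's proof closely: the paper also flattens ${\cal D}_\eta$, writes $\varphi = e^{y|D|}\psi + u$, inverts $\uno - L\circ F(\eta)$ by an explicit Neumann series with $L=\Delta_{x,y}^{-1}$ from Lemma~\ref{lem:ell5}, and recovers $G(\eta)\psi$ from the trace formula \eqref{DN2}. There is, however, one slip that matters. The diffeomorphism you actually write, $(x,y)\mapsto(x,y+\chi(y)\eta(x))$ with $\chi$ a cutoff, is the \emph{non}-regularizing flattening: its coefficients involve $\nabla\eta$ and $\nabla^2\eta$ directly, at regularity $H^{\sigma,s-1}$ and $H^{\sigma,s-2}$, which is not enough to close the perturbative argument when $\eta$ and $\psi$ have the \emph{same} regularity $H^{\sigma,s}$ --- precisely the delicate point of the theorem. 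Your own parenthetical ``Poisson-type kernel regularizing in the $x$-variable'' describes the correct object, and the paper uses exactly that: $\rho(x,y)=y+e^{y|D|}\eta(x)$, see \eqref{smooth.chanhe}. The smoothing by $e^{y|D|}$ is what puts $e^{y|D|}\nabla\eta$ into $\cH^{\sigma,s-\frac12,a}$ (Lemma~\ref{lem:und.varphi}) instead of losing a full derivative, and that half-derivative gain is exactly what makes $L\circ F(\eta)$ bounded on $\bC\oplus\cH^{\sigma,s+\frac12,a}$ with operator norm controlled by $\|\eta\|_{H^{\sigma,s_0+3/2}}$. Replace $\chi(y)\eta(x)$ by $e^{y|D|}\eta(x)$ and your plan coincides with the paper's.

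One further minor correction: the coefficients $\alpha,\delta$ in \eqref{f} contain the factor $1/\pa_y\rho = 1/(1+e^{y|D|}|D|\eta)$, so they are analytic in $\eta$ via a convergent geometric series, but not polynomial as you state.
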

We remark that, in Theorem \ref{thm:DN}, the functions 
$ \eta, \psi $ have the same analytic regularity. 
The proof of such result, given in Section \ref{sec:DN},   
relies on a regularizing flattening method (following \cite{Lannes, ABZ}) together with a perturbative argument in 
suitable functional spaces. 

\subsection{Stokes waves}\label{sec:stokes}

As an application of Theorem \ref{thm:DN} we prove that 
$ 1$-dimensional Stokes waves solutions of the pure gravity 
water waves equations in deep water 
are analytic functions belonging to the spaces $ H^{\sigma,s} $, 
and moreover depend  analytically with respect to 
 the amplitude  parameter.
  Clearly  $ 1 $-dimensional traveling waves are also $ 2$d-traveling waves 
which are constant in one space direction, so it extends to higher dimensional Stokes waves.
We first present the 
water waves equations. 
\\[1mm]{\bf The pure gravity water waves equations.}
We consider the Euler equations for a bi-dimensional incompressible,
inviscid, irrotational fluid under the action of  gravity,
filling the 
region ${ \cal D}_\eta $ defined in  \eqref{Deta} with $ d = 1 $,   
\begin{equation}\label{water}
\begin{cases}
\partial_t \Phi + \frac12 
\big( ( \pa_x \Phi )^2 + (\pa_y \Phi)^2 \big) + g \eta = 0  \ \ 
\qquad     \text{ at} \ y = \eta (x)  \cr
\partial_t \eta = \partial_y \Phi - (\partial_x \eta) \, 
(\partial_x \Phi) \qquad \qquad \qquad \quad \ \,  \text{at} \  y = \eta (x)  \cr
\Delta_{x,y} \Phi =0 \qquad  \qquad \qquad \qquad \      \qquad  \qquad  \ \quad \text{in} \ {\cal D}_{\eta}  \cr
\pa_y \Phi \to 0  \qquad  \qquad \ \ \,  \qquad \qquad \qquad \qquad \quad \   \text{ as} \  
y \to  - \infty  \, , 
\end{cases}
\end{equation}
where $ g >  0 $ is the acceleration of gravity. 
The irrotational velocity field is the gradient  
of the harmonic scalar potential $\Phi=\Phi(t,x,y) $,  
determined by its trace $ \psi(t,x)=\Phi(t,x,\eta(t,x)) $ at the free surface
$ y = \eta (t, x ) $.
Actually $\Phi (t, \cdot ) $ is 
the unique solution of the elliptic equation \eqref{Phi}. 
The time evolution of the fluid is determined by the first 
two boundary conditions in \eqref{water}  at the free surface. 
The first states  that the pressure of the fluid  
is equal, at the free surface, to the constant atmospheric pressure  (dynamic boundary condition) and the second one  
that the fluid particles  remain, along the evolution, on the free surface   (kinematic
boundary condition). 

As shown by Zakharov \cite{Zak1} and Craig-Sulem \cite{CS}, 
the evolutionary system \eqref{water} amounts to 
the following equations for the unknowns $ (\eta (t,x), \psi (t,x)) $,  
\begin{equation}\label{WWeq}
 \eta_t  = G(\eta)\psi \, , \quad 
  \psi_t  =  
- g \eta - \dfrac{\psi_x^2}{2} + \dfrac{1}{2(1+\eta_x^2)} \big( G(\eta) \psi + \eta_x \psi_x \big)^2 \, , 
\end{equation}
where  $G(\eta)$ is 
the Dirichlet-Neumann operator in \eqref{DN1}.
In addition the equations \eqref{WWeq} are the Hamiltonian system
\begin{equation}\label{HSWW}
\pa_t \eta = \nabla_\psi \mathcal{H} \, , \quad \pa_t \psi = - \nabla_\eta \mathcal{H} \, , 
\end{equation}
 where $ \nabla_{\eta}, \nabla_{\psi} $ 
 denote the $ L^2$-gradients of the Hamiltonian
$$  
\mathcal{H}(\eta,\psi) := 
 \frac12 \int_{\mathbb{T}} \left( \psi \,G(\eta)\psi +g \eta^2 \right) \de x \, , 
$$
which  is the sum of the kinetic  energy (cfr. \eqref{DNSP})
 and potential gravitational energy  of the fluid. 
 Actually, as proved in \cite{CS,Zak1}, the $ L^2$-gradient with respect to 
 $ \eta  $ of the kinetic energy
\begin{equation} \label{kine}
K(\eta, \psi) := \frac12 (\psi, G(\eta) \psi)_{L^2}  \stackrel{\eqref{DNSP}} 
= \frac12 \int_{{\cal D}_\eta} 
|\nabla \Phi|^2 \, \de x \, , 
\end{equation} 
is equal to
\begin{equation}\label{Kform}
\nabla_\eta K(\eta, \psi) =  -  \frac12 \psi_x^2  +  \dfrac{1}{2(1+\eta_x^2)} \big( G(\eta) \psi + \eta_x \psi_x \big)^2 \, ,
\end{equation} 
yielding the equivalence between \eqref{HSWW} and \eqref{WWeq}. 
 
 We also remark that the water waves equations \eqref{WWeq} 
are  invariant under space translations namely, by 
\eqref{DN-inv}, 
$$ 
{\mathcal H} \circ \tau_\theta = {\mathcal H } \, , \quad \forall \theta \in \bR^d \, . 
$$   
In addition, the water waves equations
are reversible with respect to the involution 
$$
\rho\vet{\eta(x)}{\psi(x)} := \vet{\eta(-x)}{-\psi(-x)}, \quad \text{i.e. }
\mathcal{H} \circ \rho = \mathcal{H} \, , 
$$
as a consequence of \eqref{DN-rev}.
\\[1mm]
{\bf The Stokes waves.}
Noteworthy solutions of \eqref{WWeq} are the so-called  Stokes waves, namely traveling 
solutions of the form 
\begin{equation}\label{profiles}
\eta(t,x)=\breve \eta(x-ct) \, , \quad 
\psi(t,x)=\breve \psi(x-ct) \, , 
\end{equation}
for some real  $c $ (the speed)  and  $2\pi$-periodic functions  $(\breve \eta (x), \breve \psi (x)) $ (the profiles).
In a reference frame in translational motion with constant speed $c$, 
 the water waves equations \eqref{WWeq} then become,
 by using the translation invariance property 
\eqref{DN-inv},  
\begin{equation}\label{travelingWW}
\eta_t  = c\eta_x+G(\eta)\psi \, , \quad 
 \psi_t  = c\psi_x - g \eta - \dfrac{\psi_x^2}{2} + \dfrac{1}{2(1+\eta_x^2)} \big( G(\eta) \psi + \eta_x \psi_x \big)^2  \, .
\end{equation}
The Stokes waves profiles $(\breve \eta, \breve \psi)$ in \eqref{profiles}  
are then equilibrium steady solutions 
of \eqref{travelingWW}, namely solve the system
\begin{equation}\label{travelingWWstokes}
c\eta_x+G(\eta)\psi = 0 \, , \quad 
c\psi_x - g \eta - \dfrac{\psi_x^2}{2} + 
\dfrac{1}{2(1+\eta_x^2)} \big( G(\eta) \psi + \eta_x \psi_x \big)^2 
= 0 \, .
\end{equation}
The next theorem is the main bifurcation result of small amplitude Stokes waves 
proved in this paper.  
We denote by $B(r):= \{ x \in \bR \colon \  |x| < r\}$ the real ball with center 0 and radius $r$. 
 \begin{teo}\label{LeviCivita}
{\bf (Stokes waves)} For any  $ \sigma \geq 0 $, 
$ s > 5/ 2 $ and $k\in\bN$, there exists  $ \e_0 := \e_0 (\sigma,s,k) > 0 $
and a unique family  of 
 solutions 
 $$
 (\eta_\e(x), \psi_\e(x), c_\e) \in H^{\sigma,s} (\bT)\times H^{\sigma,s}(\bT)\times \bR 
 $$ 
  of the system  \eqref{travelingWWstokes}, 
  parameterized by  
  $|\e| \leq \e_0$, 
  such that
  \begin{enumerate}
  \item  
 the map 
 $ \e \mapsto (\eta_\e, \psi_\e, c_\e)  $, 
$  B(\e_0) \to H^{\sigma,s} (\bT)\times H^{\sigma,s}(\bT)\times \bR $ 
 is analytic; 
 \item 
 $\eta_\e (x) $ is even,  $ \eta_\e (x) $ has zero average, 
 $\psi_\e (x) $ is odd; 
\item the solutions $ (\eta_\e(x), \psi_\e(x), c_\e) $ have the expansion
\begin{equation}\label{HH}
 (\eta_\e(x), \psi_\e(x)) = \e (\sqrt{k} \cos (kx), \sqrt{g} \sin (kx)) + O(\e^2) \, , \quad c_\e \to 
 \sqrt{\frac{g}{k}} 
 \ \ \text{as} \ \ \e \to 0 \, .
 \end{equation}
 \end{enumerate}
\end{teo}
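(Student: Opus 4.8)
The plan is to deduce Theorem~\ref{LeviCivita} from the analytic Crandall--Rabinowitz bifurcation theorem from a simple eigenvalue, exactly as announced in the introduction. First I would reformulate the stationary system \eqref{travelingWWstokes} as the zero set of a map
\[
F(\eta,\psi,c) := \begin{pmatrix} c\eta_x + G(\eta)\psi \\[1mm] c\psi_x - g\eta - \dfrac{\psi_x^2}{2} + \dfrac{1}{2(1+\eta_x^2)}\bigl(G(\eta)\psi + \eta_x\psi_x\bigr)^2 \end{pmatrix},
\]
acting on a suitable pair of analytic Sobolev spaces. Using Theorem~\ref{thm:DN}, with the loss of one derivative in $x$ coming from $G(\eta)$, $F$ is an analytic map from a neighborhood of the origin in $X := H^{\sigma,s}_{\mathrm{even},0}(\bT) \times H^{\sigma,s}_{\mathrm{odd}}(\bT) \times \bR$ into $Y := H^{\sigma,s-1}_{\mathrm{odd}}(\bT) \times H^{\sigma,s-1}_{\mathrm{even}}(\bT)$; here I restrict to the subspaces of even $\eta$ with zero average and odd $\psi$ dictated by the reversibility with respect to $\rho$, which makes the trivial solution line $(\eta,\psi) = (0,0)$ persist and selects the correct parity in \eqref{HH}. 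The analyticity of $F$ follows from Theorem~\ref{thm:DN} together with the algebra and tame-product properties of the spaces $H^{\sigma,s}$ recalled in Appendix~\ref{App1A}, plus analyticity of the Nemytskii-type map $\eta_x \mapsto (1+\eta_x^2)^{-1}$ for $\|\eta_x\|$ small, which holds because $s - 1 > d/2 = 1/2$.

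Next I would compute the linearization at the trivial solution $(\eta,\psi) = (0,0)$ with speed parameter $c$. Since $G(0) = |D|$, the differential $d_{(\eta,\psi)}F(0,0,c)$ is the Fourier-diagonal operator sending $(\widehat\eta_k,\widehat\psi_k)$ to $(ic k\,\widehat\eta_k + |k|\,\widehat\psi_k,\ ic k\,\widehat\psi_k - g\,\widehat\eta_k)$. Restricting to the $k$-th Fourier block, this $2\times 2$ matrix is singular precisely when $c^2 k^2 = g|k|$, i.e. $c^2 = g/|k|$; thus at $c_k^* := \sqrt{g/k}$ the operator $d_{(\eta,\psi)}F(0,0,c_k^*)$ has a one-dimensional kernel in $X$, spanned by the vector whose only nonzero Fourier modes are $\pm k$ and which, after imposing the even/odd parities, is a real multiple of $(\sqrt{k}\cos(kx),\sqrt{g}\sin(kx))$ — this is the eigenfunction appearing in \eqref{HH}. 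One then checks that for $|j|\neq k$ the $j$-block stays invertible at $c = c_k^*$, so the kernel is exactly simple, the range of $d_{(\eta,\psi)}F(0,0,c_k^*)$ is closed with one-dimensional complement, and the transversality (Crandall--Rabinowitz) condition
\[
\partial_c\, d_{(\eta,\psi)}F(0,0,c_k^*)\,[v_k^*] \notin \mathrm{Range}\bigl(d_{(\eta,\psi)}F(0,0,c_k^*)\bigr)
\]
holds, since differentiating the $k$-block in $c$ produces a vector with a component along the cokernel (concretely $2c_k^* k \neq 0$ times the relevant mode). The analytic version of the Crandall--Rabinowitz theorem, as in Ambrosetti--Prodi \cite{AP}, then yields an analytic branch $\e \mapsto (\eta_\e,\psi_\e,c_\e)$ bifurcating from $(0,0,c_k^*)$, with the leading order \eqref{HH} read off from the kernel direction, and uniqueness in a full neighborhood after quotienting the trivial line.

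The main obstacle, and the reason Theorem~\ref{thm:DN} is needed in the form stated, is verifying that $F$ is genuinely analytic as a map \emph{between the analytic-Gevrey spaces} $H^{\sigma,s}$ with $\eta$ and $\psi$ at the same regularity, rather than merely smooth or analytic on Sobolev scales — this is exactly what Theorem~\ref{thm:DN} supplies, and without it the bifurcating profiles would not be known to lie in $H^{\sigma,s}$. A secondary technical point is bookkeeping the one-derivative loss: $F$ maps $X$ into $Y$ with $s-1$ in the target, so to apply an implicit-function/bifurcation scheme I must check that the finite-dimensional reduction and the inversion of $d_{(\eta,\psi)}F$ on the complement of the kernel gain back that derivative, which they do because on the range the inverse is, block by block, multiplication by a matrix whose entries are $O(1/|k|)$ for large $|k|$ — i.e. the reduced linear operator is an isomorphism $H^{\sigma,s-1}_{\perp} \to H^{\sigma,s}_{\perp}$, uniformly in $\sigma$. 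Once these two points are in place, the rest is the routine application of the abstract analytic bifurcation theorem, and the parity and zero-average statements in item~(2) follow automatically from having set up $F$ on the reversible subspaces and from the fact that $G(\eta)\psi$ always has zero average.
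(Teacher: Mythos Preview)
Your approach is the same as the paper's: set up $F$ on the reversible subspaces, invoke Theorem~\ref{thm:DN} for analyticity, compute the linearization $\begin{pmatrix} c\partial_x & |D| \\ -g & c\partial_x \end{pmatrix}$, identify the bifurcation speeds $c_k^*=\sqrt{g/k}$ and the one-dimensional kernel, verify the range and transversality conditions, and apply analytic Crandall--Rabinowitz. The paper carries this out in Lemmata~\ref{lem:ker} and~\ref{lem:CrandallRabinowitzHP}, including the explicit check that the block inverse gains one derivative, exactly as you indicate.

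There is, however, one genuine gap in your setup. You take the target space to be $Y = H^{\sigma,s-1}_{\mathrm{odd}} \times H^{\sigma,s-1}_{\mathrm{even}}$, without imposing zero average on the second factor. But the linearization at $(0,0,c)$ maps into $H^{\sigma,s-1}_{\mathrm{odd}} \times H^{\sigma,s-1}_{\mathrm{even},0}$ (both $-g\eta$ and $c\psi_x$ have zero mean), so in your $Y$ the range has codimension at least two --- the constants in the second slot are never reached --- and the Crandall--Rabinowitz hypothesis $\mathrm{codim}\,R=1$ fails. The remedy is to take the second factor of $Y$ to be $H^{\sigma,s-1}_{\mathrm{even},0}$, but then you must check that the \emph{nonlinear} second component of $F$ has zero average. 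This is not automatic: the term
\[
-\tfrac12\psi_x^2 + \tfrac{1}{2(1+\eta_x^2)}\bigl(G(\eta)\psi+\eta_x\psi_x\bigr)^2
\]
is not visibly a total derivative, and the fact that $G(\eta)\psi$ has zero average (which you invoke) is irrelevant here. The paper handles this in Lemma~\ref{lem:averagefree} by recognizing the expression as $\nabla_\eta K(\eta,\psi)$ via \eqref{Kform} and then using the vertical-translation invariance $G(\eta+m)=G(\eta)$ from \eqref{DN-inf}, which forces $\int_\bT \nabla_\eta K\,\de x = 0$. You should add this step.
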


 Theorem \ref{LeviCivita} is proved in Section \ref{sec:Sto}.
 Let us make some comments on the result.
 \\[1mm]
1. As already mentioned in the introduction, the first rigorous bifurcation 
proof of  small amplitude  Stokes waves for pure 
  gravity water waves  goes back to  Levi-Civita \cite{LC} and  
 Nekrasov \cite{Nek} in deep water, and Struik \cite{Struik} in finite depth.
 We refer to the monographs of Ambrosetti-Prodi \cite{AP}
 and Buffoni-Toland \cite{BuT} for a complete presentation.
Concerning regularity, it is known  since  Lewy \cite{Lewy} that a Stokes wave which is at least $ C^1 $ 
is actually analytic.  Theorem \ref{LeviCivita} 
proves in addition that the Stokes waves $  (\eta_\e(x), \psi_\e(x)) $  depend 
analytically on the amplitude $ \e $ taking values in a space of analytic functions $ H^{\sigma,s} 	\times H^{\sigma,s}   $. 
In finite depth and in presence of surface tension, an analyticity result of this kind 
 is proved in Nicholls-Reitich \cite{NR}, by a power series expansion.
We also mention Plotnikov-Toland \cite{PlotTol} for  related results about analytic continuation of Stokes waves.

 Existence of traveling water waves has been also proved by 
 Zeidler \cite{Zei} under the effect of capillary forces and 
 Martin  \cite{Martin}, Walh\'en \cite{Wh1} also for constant vorticity flows. 
 We expect that, thanks to  Theorem \ref{thm:DN},  
 an analyticity result for the Stokes waves, analogous to  Theorem \ref{LeviCivita},  
holds also in these cases. 
\\[1mm]
2. Higher order Taylor expansions of the Stokes waves in $ \e $ are known, see e.g. 
\cite{Fenton}, \cite{NS}, \cite{NR}. 
 We remark that Theorem \ref{LeviCivita} proves the convergence of the Taylor 
 series of the Stokes waves in $ \e $, taking values in spaces of analytic periodic functions. 
\\[2mm]
3.  {\it Quasi-periodic traveling waves.}
More general $ 1$d  time quasi-periodic traveling Stokes waves 
  have been recently obtained in Berti-Franzoi-Maspero \cite{BFM,BFM2}, with or without
  surface tension,   and Feola-Giuliani \cite{FG}, by means 
  of a Nash-Moser implicit function iterative scheme.
We remark that these solutions  are not steady in any moving frame. This implies 
a small divisor problem.  
\\[1mm]
4. {\it Higher space dimension: existence.}
{For three dimensional fluids,   in addition to Stokes waves,  
also  traveling wave solutions which are 
nontrivially periodic  in both spatial directions  are known}, 
for example forming hexagonal patterns. 
 Their existence was first  proved in  Craig-Nicholls
\cite{CN} for  gravity-capillary  water waves, by applying 
variational bifurcation arguments \`a la Weinstein-Moser, exploiting the Hamiltonian
nature \eqref{HSWW} of the water waves equations.
The surface tension allows to apply, in the bifurcation analysis,  
the standard implicit function theorem. 
On the other hand the existence of 
$ 2$d pure gravity doubly-periodic traveling wave solutions is a small divisor problem. 
In this case, solutions with Sobolev regularity were constructed   by 
Iooss-Plotinkov
\cite{IP-Mem-2009,IP2} by means of a Nash-Moser implicit function theorem,
requiring suitable Diophantine conditions on the speed vector.
\\[1mm]
5. {\it 
 Regularity.}
In higher space dimensions a regularity result \`a la Lewy \cite{Lewy}, i.e. 
a traveling wave surface which is at least $ C^1 $
is actually analytic,  
has been  proved for gravity-capillary water waves
by Craig-Matei \cite{CM}. 
For 
pure gravity waves, a   result of this kind is false,  
 because the system is no more elliptic. This feature is the counterpart of the small divisor problem arising in the existence proof of Iooss-Plotinkov \cite{IP-Mem-2009,IP2}. 
Assuming Diophantine conditions on the speed vector, 
Alazard-Metivier  \cite{AIM} proved that the periodic traveling waves 
constructed in  \cite{IP-Mem-2009,IP2}, which have Sobolev regularity,  
are  indeed $ C^\infty $. 
\\[1mm]
6. We finally note that, for larger values of the amplitude 
$ \e $,  the regularity of the traveling wave
solutions 
may break down. Indeed it is well known that 
large traveling waves have cusps, as proved in the celebrated works about the Stokes 
conjecture of Amick, Fraenkel, Toland \cite{AFT} and Plotinkov \cite{Plot}.

\section{Analyticity of the Dirichlet-Neumann operator}\label{sec:DN}

In this section we prove Theorem \ref{thm:DN} concerning the analyticity of the Dirichlet-Neumann operator. 
The first step is to straighten  the free surface. 
\\[1mm]
{\bf Regularizing diffeomorphism.}
Following \cite{Lannes,ABZ} we apply the regularizing 
change of variables
\begin{equation}
\label{smooth.chanhe}
 x = x' \, , \qquad y = \rho(x', y') \, ,  \quad \rho(x', y') := y' + e^{y'|D|}\eta(x') \, , 
\end{equation}
where $e^{y|D|} $ is the Fourier multiplier
$$
\left(e^{y|D|} g\right)(x):=  \sum_{k \in \bZ^d}  g_k \, e^{y|k|} \, e^{\im k \cdot x} \, , 
\quad \forall \  g(x) = \sum_{k \in \bZ^d}  g_k  \, e^{\im k \cdot x} \, . 
$$
Note that 
$$
\rho(x', 0) = \eta(x') \, ,   \quad \lim_{y' \to - \infty} \rho(x',y')-y' = \eta_0 \, ,  
$$
and, since
$$ 
\pa_{y'} \rho(x',y') = 1 + e^{y|'D|} |D| \eta \, , 
$$
 if 
 $ \sup_{y'<0} \| e^{y'|D|} |D| \eta \|_{L^\infty (\bT^d)} < 1 $
the change of coordinates \eqref{smooth.chanhe}  is a diffeomorphism between  the domain $ {\cal D}_\eta = \{(x,y) \, \colon \,  y \leq \eta(x)\}$ and the flat half-cylinder  $\{ (x',y') 
\, \colon \,  y' \leq 0 \} = \bT^d \times \bR_{\leq 0}$ where $ \bR_{\leq 0} := (-\infty, 0] $. 
By the change of variables \eqref{smooth.chanhe}  the derivatives $\pa_y$ and $\nabla_x$ become respectively
$$
\Lambda_1 = \frac{1}{\pa_{y'} \rho} \pa_{y'} \, , \qquad
\Lambda_2 = \nabla_{x'} - \frac{\nabla_{x'} \rho}{\pa_{y'}\rho} \pa_{y'} \, ,
$$
and the transformed harmonic function 
$$
\varphi(x',y') := \Phi(x', y' + \rho(x',y')) 
$$
solves the elliptic problem 
\begin{equation}
\label{ell2.0}
\begin{cases}
(\Lambda_1^2 + \Lambda_2^2)\varphi = 0 \\
\varphi(x,0) = \psi(x)    \\
\pa_y \varphi(x,y) \to 0 & \mbox{ as } y \to - \infty \, .
\end{cases}
\end{equation}
By means of chain rule, system \eqref{ell2.0}  is rewritten (cfr.  \cite{ABZ})    
as the perturbed elliptic problem    (we rename the variables $x',y' $ as $ x,y$)
\begin{equation}
\label{ell2}
\begin{cases}
\Delta_{x,y} \varphi = F(\eta)[\varphi] 
  \\
\varphi(x,0) = \psi(x)    \\
\pa_y \varphi(x,y) \to 0 \quad \mbox{ as } y \to - \infty \, , 
\end{cases}
\end{equation}
where
\begin{equation}\label{def:f}
F(\eta)[\varphi]:=    \left( \alpha(\eta) \pa_y^2  + \beta(\eta) \Delta + \gamma(\eta) \cdot \nabla\pa_y + \delta(\eta ) \pa_y \right) \varphi
\end{equation}
with, since $ \nabla \rho(x,y) = e^{y|D|} \nabla \eta $ and 
$ \pa_{y} \rho(x,y) = 1 + e^{y|D|} |D| \eta $, 
 \begin{equation}
 \label{f}
 \begin{aligned}
&\alpha(\eta) :=  1- \frac{1+|\nabla \rho|^2}{\pa_y \rho} = 
\frac{e^{y|D|} |D| \eta- | e^{y|D|} \nabla \eta|^2}{1+e^{y|D|} |D| \eta } \, , 
\\
&\beta(\eta) := 1-\pa_y \rho = - e^{y|D|} |D| \eta \, , \\
&\gamma(\eta) := 2 \nabla \rho = 2 e^{y|D|}\nabla \eta \, , \\
&\delta(\eta) :=\frac{1}{\pa_y \rho } \Big( -2 \nabla\rho \cdot \nabla \pa_y \rho + \pa_y \rho \Delta \rho 
+ \frac{1+|\nabla \rho|^2}{\pa_y \rho} \pa_y^2 \rho \Big)\, . 
 \end{aligned}
 \end{equation}
In the new variables \eqref{smooth.chanhe},  the Dirichlet-Neumann operator defined in \eqref{DN1} becomes
\begin{equation}
\label{DN2}
\begin{aligned}
[G(\eta)\psi] (\cdot )& =
- \nabla\eta\cdot \nabla\varphi(\cdot, 0) + 
\frac{1+ |\nabla\eta|^2(\cdot)}{1+(|D|\eta)(\cdot)} \,  (\pa_y \varphi)(\cdot, 0) \, .
\end{aligned}
\end{equation}

\paragraph{Function spaces.}
In order to state our main existence result for the solutions  of \eqref{ell2},  
we introduce some function spaces. 
Given $s \in \bN_0 $, $ \sigma,  a  \geq 0 $, we define 
\begin{equation}
\label{spaziok}
\cH^{\sigma, s, a}:= 
\Big\{ 
u(x,y) = \sum_{k \in \bZ^d}   u_k (y) e^{\im k \cdot x}
\, \colon \, \bT^d \times (-\infty, 0] \to \bC \ \ \mbox{:} \ \    \| u \|_{\sigma, s, a} < \infty 
\Big\}  \,
\end{equation}
endowed with  the norm 
\begin{align}
\| u \|_{\sigma, s, a}^2 & := 
\sum_{j = 0}^s 
\| \pa_y^j u \|_{L^{2,a}( \bR_{\leq 0},H^{\sigma,s-j})}^2   \label{migliore} \\
& = 
\sum_{j = 0}^s 
\int_{-\infty}^0 \| \pa_y^j u(\cdot, y)\|_{H^{\sigma,s-j}}^2 \, e^{-2 a y} \de y  \notag \\ 
& = 
\sum_{j = 0}^s \int_{-\infty}^0 \sum_{k \in \bZd}
 \, e^{2 \sigma |k|_1 } \, \langle k \rangle^{2(s-j)} \, 
| \pa_y^j  u_k  (y)|^2 e^{2 a |y|} \de y  \notag \\  
& = 
\sum_{j = 0}^s \sum_{k \in \bZd}
e^{2 \sigma |k|_1 } \, \langle k \rangle^{2(s-j)} \,  \| \pa_y^j u_k \|_{L^{2,a}}^2   
\label{migliore4}
\end{align}
where,  given a Hilbert space $ X $, we have used the notation
\begin{equation}\label{defuL2a}
\| u \|_{L^{2,a}( \bR_{\leq 0},X)}^2  := \int_{-\infty}^{0} \| u(y) \|_X^2 e^{-2 a y} \,  \de y
= \int_{-\infty}^{0} \| u(y) \|_X^2 e^{2 a |y|} \, \de y  \, .
\end{equation}

\begin{rmk}
For $ \sigma = a = 0$,  the space $ H^{0,s,0} $ coincides with the 
Sobolev space $ H^s ( \bT^d \times \bR_{\leq 0 }) $ of 
$ L^2 $ functions $ u : \bT^d \times \bR_{\leq 0 } \to \bC $  
possessing weak derivatives $ \pa^\alpha u $ in $ L^2 $, for any multiindex $ \alpha \in \bN^{d+1}$ with modulus $ | \alpha| \leq s $, 
with equivalent norm
$ \| u \|_s^2 = \sum_{\alpha \in \bN^{d+1}, |\alpha| \leq s} \| \pa^\alpha u \|_{L^2}^2 $.
\end{rmk}

We point out that, for any $ s \in \bN $,  
$$
\|u\|_{\sigma,s,a}^2 = \|u\|^2_{L^{2,a}(\bR_{\leq 0}, H^{\sigma,s})} + \|\pa_y u\|_{\sigma,s-1,a}^2 \, ,
$$
and, by  \eqref{migliore} and $  \| \pa_{x_i} v \|_{H^{\sigma,s-1}} 
 \leq  \| v  \|_{H^{\sigma,s}}  $,   we directly get the following simple lemma. 
\begin{lem}\label{lem:prop}
Let $ s \in \bN $, $ \sigma \geq 0 $, $ a \geq 0 $. 
The linear maps
$$
\pa_{x_i} : \cH^{\sigma, s, a} \mapsto \cH^{\sigma, s-1, a} \, , \   \forall i =1,\dots,d, 
\quad \pa_y : \cH^{\sigma, s, a} \mapsto \cH^{\sigma, s-1, a} \, , 
$$
are continuous.
\end{lem}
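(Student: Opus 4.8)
The plan is to read the statement straight off the definition \eqref{migliore} of the norm $\| \cdot \|_{\sigma,s,a}$; no genuine obstacle is expected, the whole content being bookkeeping on the index $j$ that counts the number of $y$-derivatives, together with the elementary Fourier-side bound $\| \pa_{x_i} v \|_{H^{\sigma,m}} \le \| v \|_{H^{\sigma,m+1}}$, valid for any $m \ge 0$ since $\langle k \rangle^{2m} |k_i|^2 \le \langle k \rangle^{2m} |k|^2 \le \langle k \rangle^{2(m+1)}$ by \eqref{def:Hsigmas}. Linearity of all the maps is obvious, so only the continuity estimate needs a line.

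For the maps $\pa_{x_i}$ I would use that $x$- and $y$-derivatives commute, $\pa_y^j \pa_{x_i} u = \pa_{x_i} \pa_y^j u$, and then estimate term by term: for $u \in \cH^{\sigma,s,a}$,
\begin{align*}
\| \pa_{x_i} u \|_{\sigma,s-1,a}^2
&= \sum_{j=0}^{s-1} \int_{-\infty}^0 \| \pa_y^j \pa_{x_i} u(\cdot, y) \|_{H^{\sigma,s-1-j}}^2 \, e^{-2ay} \de y \\
&\le \sum_{j=0}^{s-1} \int_{-\infty}^0 \| \pa_y^j u(\cdot, y) \|_{H^{\sigma,s-j}}^2 \, e^{-2ay} \de y
\le \| u \|_{\sigma,s,a}^2 \, ,
\end{align*}
the last step just discarding the nonnegative $j=s$ summand in \eqref{migliore}. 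Hence $\pa_{x_i} : \cH^{\sigma,s,a} \to \cH^{\sigma,s-1,a}$ is bounded, with operator norm at most $1$.

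For $\pa_y$ the cleanest route is the identity recorded immediately before the statement,
$$
\| u \|_{\sigma,s,a}^2 = \| u \|_{L^{2,a}(\bR_{\leq 0}, H^{\sigma,s})}^2 + \| \pa_y u \|_{\sigma,s-1,a}^2 \, ,
$$
which already gives $\| \pa_y u \|_{\sigma,s-1,a} \le \| u \|_{\sigma,s,a}$. Alternatively one reindexes directly in \eqref{migliore}: since $\pa_y^j (\pa_y u) = \pa_y^{j+1} u$, the substitution $i = j+1$ turns $\sum_{j=0}^{s-1} \int_{-\infty}^0 \| \pa_y^{j+1} u(\cdot,y) \|_{H^{\sigma,s-1-j}}^2 e^{-2ay}\de y$ into $\sum_{i=1}^{s} \int_{-\infty}^0 \| \pa_y^{i} u(\cdot,y) \|_{H^{\sigma,s-i}}^2 e^{-2ay}\de y \le \| u \|_{\sigma,s,a}^2$. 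Either way continuity (again with norm $\le 1$) follows. The only point deserving a word is that for $u \in \cH^{\sigma,s,a}$ one has $\pa_y^j u(\cdot,y) \in H^{\sigma,s-j}$ for a.e.\ $y < 0$ and each $j \le s$, which is built into \eqref{spaziok}–\eqref{migliore}; thus all the derivatives above are well defined as elements of the spaces in which we estimate them.
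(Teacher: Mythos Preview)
Your proof is correct and follows exactly the route the paper indicates: the lemma is stated there as an immediate consequence of the definition \eqref{migliore} together with the Fourier-side inequality $\| \pa_{x_i} v \|_{H^{\sigma,s-1}} \le \| v \|_{H^{\sigma,s}}$, and the identity $\| u \|_{\sigma,s,a}^2 = \| u \|_{L^{2,a}(\bR_{\leq 0}, H^{\sigma,s})}^2 + \| \pa_y u \|_{\sigma,s-1,a}^2$ displayed just before the statement. You have simply written out the one-line bookkeeping that the paper leaves to the reader.
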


We also denote 
\begin{equation}\label{coplusH}
\bC \oplus \cH^{\sigma, s, a}
:= \big\{ c + u(x,y) \, , \ c \in \bC \, , \ u \in \cH^{\sigma, s, a} \big\} \, ,\quad \Pi : \bC \oplus \cH^{\sigma, s, a} \to \cH^{\sigma, s, a}, \ \Pi [c + u] = u \, , 
\end{equation}
and, with a small abuse of notation, given a function 
$g  \in \bC \oplus \cH^{\sigma, s, a}$, we denote  its norm by
$\| g\|_{\sigma, s, a}:=  \| \Pi g \|_{\sigma, s, a} + |g - \Pi g | $. 
The function spaces $\cH^{\sigma, s, a}  $ and $\bC \oplus \cH^{\sigma, s, a} $ are  
modeled to mimic the decay 
of the harmonic function  
$ \underline{\varphi} $ in 
\eqref{ell3} as $ y \to - \infty $, cfr.  Lemma \ref{lem:und.varphi}. 

\smallskip

We now list a series of properties 
of the  spaces $\cH^{\sigma, s, a}$ used in the sequel; we defer their proofs in 
  Appendix \ref{sec:A2}.

\begin{lem}[{\bf Trace}]\label{lem:trace}
Let  $ \sigma \geq 0 $, $ s  \in \bR $. Then one has
\begin{equation}\label{uinfty}
\|u \|_{C^0(\bR_{\leq 0},H^{\sigma,s})}\leq \|u\|_{L^{2}(\bR_{\leq 0},H^{\sigma,s+\frac12})} + \|\pa_y u\|_{L^{2}(\bR_{\leq 0},H^{\sigma,s-\frac12})}.
\end{equation}
In particular, 
the  trace operator 
\begin{equation} \label{optraccia}
\Gamma (u):= u(\cdot, 0) := u\vert_{y= 0}
\end{equation}
is, for any  $s\in \bN_0$, $a \geq 0$,   a linear bounded 
 map between $ \cH^{\sigma, s+1, a} \to H^{\sigma, s+\frac12}$, 
satisfying
\begin{equation}
\label{est:trace}
\| \Gamma (u) \|_{H^{\sigma, s+ \frac12}} \leq   \| u \|_{\sigma, s+1, 0} \leq  \| u \|_{\sigma, s+1, a} \, . 
\end{equation}
\end{lem}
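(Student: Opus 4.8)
The plan is to prove first the pointwise-in-$y$ bound \eqref{uinfty} at the level of a single Fourier mode, and then sum in $k$ with the appropriate analytic weight $e^{2\sigma|k|_1}\langle k\rangle^{2s}$. Fix $k\in\bZd$ and write $v(y):=u_k(y)$, so that $\|u(\cdot,y)\|_{H^{\sigma,s}}^2=\sum_k e^{2\sigma|k|_1}\langle k\rangle^{2s}|v(y)|^2$. For a scalar function $v\in H^1_{\mathrm{loc}}((-\infty,0])$ one has the elementary identity, valid for every $y\le 0$,
\begin{equation}\label{fundthm}
|v(y)|^2 = -\int_{-\infty}^{y} \frac{\de}{\de t}|v(t)|^2\,\de t = -2\,\Re\!\int_{-\infty}^{y} v(t)\,\overline{v'(t)}\,\de t \, ,
\end{equation}
provided $|v(t)|^2\to 0$ as $t\to-\infty$ (which holds for any $v$ in the relevant space since $\int_{-\infty}^0(|v|^2+|v'|^2)\de t<\infty$, so $|v|^2$ is an integrable function with integrable derivative, hence has a limit, which must be $0$). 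Multiplying \eqref{fundthm} by $\langle k\rangle^{2s}$ and by $e^{2\sigma|k|_1}$, summing over $k$, and applying Cauchy--Schwarz in $t$ after splitting $\langle k\rangle^{2s}=\langle k\rangle^{2s+1}\langle k\rangle^{-1}\cdot\langle k\rangle$ so that the two factors carry $s+\tfrac12$ and $s-\tfrac12$ respectively, gives
\begin{equation}\label{pointwise}
\|u(\cdot,y)\|_{H^{\sigma,s}}^2 \leq 2\,\|u\|_{L^2(\bR_{\leq 0},H^{\sigma,s+\frac12})}\,\|\pa_y u\|_{L^2(\bR_{\leq 0},H^{\sigma,s-\frac12})} \, .
\end{equation}
Since the right-hand side is independent of $y\le 0$, taking the supremum over $y$ and using $2ab\le a^2+b^2$ yields \eqref{uinfty}; continuity of $y\mapsto u(\cdot,y)$ into $H^{\sigma,s}$ follows from the same estimate applied to differences $u(\cdot,y_1)-u(\cdot,y_2)$, i.e. from $\|u(\cdot,y_1)-u(\cdot,y_2)\|_{H^{\sigma,s}}^2\le 2\|u\|_{L^2((y_2,y_1),H^{\sigma,s+\frac12})}\|\pa_y u\|_{L^2((y_2,y_1),H^{\sigma,s-\frac12})}$, which tends to $0$ by absolute continuity of the Lebesgue integral.

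For the statement about the trace operator $\Gamma$, specialize \eqref{uinfty} to $y=0$: given $u\in\cH^{\sigma,s+1,a}$ with $s\in\bN_0$, $a\ge 0$, apply the bound with the regularity index $s+\tfrac12$ in place of $s$, so that the two terms on the right become $\|u\|_{L^2(\bR_{\leq 0},H^{\sigma,s+1})}$ and $\|\pa_y u\|_{L^2(\bR_{\leq 0},H^{\sigma,s})}$. Both are controlled by $\|u\|_{\sigma,s+1,0}$: indeed the $j=0$ term in \eqref{migliore} is exactly $\|u\|_{L^{2,0}(\bR_{\leq 0},H^{\sigma,s+1})}^2=\|u\|_{L^2(\bR_{\leq 0},H^{\sigma,s+1})}^2$, while the $j=1$ term is $\|\pa_y u\|_{L^{2,0}(\bR_{\leq 0},H^{\sigma,s})}^2=\|\pa_y u\|_{L^2(\bR_{\leq 0},H^{\sigma,s})}^2$ (here one uses $s+1\in\bN$ so that $j=1\le s+1$ is an admissible index in the sum defining the norm). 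Hence $\|\Gamma(u)\|_{H^{\sigma,s+\frac12}}\le\|u\|_{\sigma,s+1,0}$; and since $e^{2a|y|}\ge 1$ on $\bR_{\leq 0}$ for $a\ge 0$, each summand in \eqref{migliore4} with weight $e^{2a|y|}$ dominates the corresponding one with weight $1$, giving $\|u\|_{\sigma,s+1,0}\le\|u\|_{\sigma,s+1,a}$. This establishes \eqref{est:trace} and the asserted boundedness $\cH^{\sigma,s+1,a}\to H^{\sigma,s+\frac12}$.

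The only genuinely delicate point is the justification that each Fourier mode $v=u_k$ satisfies $|v(t)|^2\to 0$ as $t\to-\infty$, which is what makes the integration-by-parts identity \eqref{fundthm} legitimate with no boundary term at $-\infty$; everything else is Cauchy--Schwarz and bookkeeping of the weights. This decay is not automatic from membership in the space for a \emph{fixed} mode in isolation — it genuinely uses that $v,v'\in L^2(\bR_{\leq 0})$, so $|v|^2\in W^{1,1}(\bR_{\leq 0})$, whence $|v|^2$ has a limit at $-\infty$ which, being the limit of an $L^1$ function, is necessarily $0$. One should also remark that the density of smooth compactly supported (in $y$) functions in $\cH^{\sigma,s,a}$ could be invoked to first prove \eqref{fundthm}--\eqref{pointwise} for nice $u$ and then pass to the limit, but the direct Sobolev-on-the-half-line argument above is cleaner and avoids any density discussion.
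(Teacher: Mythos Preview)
Your argument is essentially the same as the paper's: both work Fourier mode by mode, use the fundamental theorem of calculus to bound $|u_k(y_0)|^2$ by $2\int_{-\infty}^0|u_k||\pa_y u_k|\,\de y$, and then redistribute the weight $\langle k\rangle^{2s}$ as $\langle k\rangle^{s+\frac12}\cdot\langle k\rangle^{s-\frac12}$ (the paper via the elementary inequality $2\langle k\rangle^{2s}ab\le\langle k\rangle^{2s+1}b^2+\langle k\rangle^{2s-1}a^2$, you via Cauchy--Schwarz followed by $2ab\le a^2+b^2$); the paper handles the vanishing at $-\infty$ by density of $C^\infty_c$ rather than your direct $W^{1,1}$ argument, but this is a cosmetic difference. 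Note one typo: in your identity \eqref{fundthm} the minus sign is spurious, since $\int_{-\infty}^y\frac{\de}{\de t}|v|^2\,\de t=|v(y)|^2$ with a plus sign --- this is harmless because you immediately pass to absolute values.
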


If  $s   > \frac{d+1}{2}$, the space $  \cH^{\sigma, s, a} $  is an algebra with respect to the product of functions and the following tame estimates hold. 
 
\begin{prop}[{\bf Tame}]\label{lem:algebray}
 Let $\sigma, a \geq 0$ and $s \geq s_0  > \frac{d+1}{2}$, $s,s_0 \in \bN$. 
 Then there exist positive constants $C_s \geq 1 $ (non-decreasing in $ s $)  such that, for any  
$u  \in \cH^{\sigma, s, 0}$ and  $ v \in \cH^{\sigma, s, a}$, 
\begin{equation}
\label{algebrassa}
\| u v \|_{\sigma, s, a} \leq C_s \big(
\| u \|_{\sigma, s, 0} \, \| v \|_{\sigma, s_0, a } +
  \| u \|_{\sigma, s_0, 0} \, \| v \|_{\sigma, s, a } \big) \, . 
\end{equation}
In particular one has
\begin{equation}\label{potenze}
\| u^j \|_{\sigma,s,a}\leq \big(2 C_s \|u\|_{\sigma,s_0,a}\big)^{j-1} \|u\|_{\sigma,s,a}\, , \quad \forall j \geq 1 \, .
\end{equation}
\end{prop}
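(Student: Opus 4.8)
\emph{Proof sketch.} The plan is to recognize \eqref{algebrassa} as the classical tame (Moser) product estimate for the Sobolev space $H^{s}$ on the $(d+1)$-dimensional half-cylinder $\bT^{d}\times\bR_{\leq 0}$, of which $\cH^{\sigma,s,a}$ is merely a version decorated with the Gevrey weight $e^{\sigma|k|_{1}}$ in the $x$-frequencies — harmless in products since $|k|_{1}\leq|k-k'|_{1}+|k'|_{1}$, this being exactly the content of the tame product inequalities for $H^{\sigma,\cdot}(\bT^{d})$ proved in Appendix \ref{App1A} — and with the exponential weight $e^{a|y|}$ in $y$, which we shall always charge to the factor $v$. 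The hypothesis $s_{0}>\frac{d+1}{2}$ enters at exactly one place: it is the Sobolev threshold in dimension $d+1$ that makes $\cH^{\sigma,s_{0},a}$ embed into the $e^{a|y|}$-weighted $L^{\infty}(\bT^{d}\times\bR_{\leq 0})$. Then \eqref{potenze} follows from \eqref{algebrassa} by an elementary induction on $j$.

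\textbf{Step 1: a weighted $L^{\infty}$ bound, uniform in $a$.} Since $s_{0}>\frac{d+1}{2}$ we fix a real $\tau$ with $\frac d2<\tau\leq s_{0}-\frac12$. Applying the trace inequality \eqref{uinfty} of Lemma \ref{lem:trace} to $y\mapsto\|w(\cdot,y)\|_{H^{\sigma,\tau}}$ with the weight $e^{a|y|}$ inserted — legitimate with an $a$-independent constant, because in the underlying integration by parts the extra term $-2a\int\|w\|^{2}e^{2a|y|}\,\de y$ carries the favorable sign and may be discarded — gives, for every $w\in\cH^{\sigma,s_{0},a}$,
\begin{equation}\label{plan:Linf}
\sup_{y\leq 0}\ \|w(\cdot,y)\|_{H^{\sigma,\tau}}\,e^{a|y|}\ \leq\ \|w\|_{L^{2,a}(\bR_{\leq 0},H^{\sigma,\tau+\frac12})}+\|\pa_{y}w\|_{L^{2,a}(\bR_{\leq 0},H^{\sigma,\tau-\frac12})}\ \leq\ C\,\|w\|_{\sigma,s_{0},a}\,,
\end{equation}
the last inequality using $\tau+\frac12\leq s_{0}$ and $\tau-\frac12\leq s_{0}-1$; with $a=0$ one gets the analogous bound for the unweighted sup. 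Since $H^{\sigma,\tau}(\bT^{d})\hookrightarrow L^{\infty}(\bT^{d})$ for $\tau>d/2$ (the weight $e^{\sigma|k|_{1}}$ only helping), \eqref{plan:Linf} controls the $e^{a|y|}$-weighted sup norm on the cylinder.

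\textbf{Step 2: Leibniz, Moser and Gagliardo--Nirenberg; powers.} By definition and the Leibniz rule, $\|uv\|_{\sigma,s,a}^{2}=\sum_{j\leq s}\|\pa_{y}^{j}(uv)\|_{L^{2,a}(\bR_{\leq 0},H^{\sigma,s-j})}^{2}$ and $\pa_{y}^{j}(uv)=\sum_{k\leq j}\binom jk(\pa_{y}^{k}u)(\pa_{y}^{j-k}v)$; distributing also the remaining $x$-derivatives reduces \eqref{algebrassa} to bounding $\|e^{a|y|}(\pa^{\beta}u)(\pa^{\gamma}v)\|_{L^{2}(\bT^{d}\times\bR_{\leq 0})}$ for all multi-indices with $|\beta|+|\gamma|\leq s$, each $\pa$ being a $\pa_{x_{i}}$ or $\pa_{y}$, $u$ unweighted, $e^{a|y|}$ attached to the $v$-factor. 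The extreme cases $|\beta|=0$ and $|\gamma|=0$ are immediate: one puts the factor carrying no derivative in $L^{\infty}$, controlled (with its weight) by Step 1, and the other directly in the $\cH^{\sigma,s,\cdot}$-norm. For the intermediate terms one uses H\"older, $\|e^{a|y|}(\pa^{\beta}u)(\pa^{\gamma}v)\|_{L^{2}}\leq\|\pa^{\beta}u\|_{L^{p}}\,\|e^{a|y|}\pa^{\gamma}v\|_{L^{q}}$ with $p=2s/|\beta|$, $q=2s/|\gamma|$, followed by the Gagliardo--Nirenberg inequality $\|\pa^{\beta}u\|_{L^{p}}\leq C\|u\|_{L^{\infty}}^{1-|\beta|/s}\|u\|_{\sigma,s,0}^{|\beta|/s}$ and its $e^{a|y|}$-weighted counterpart $\|e^{a|y|}\pa^{\gamma}v\|_{L^{q}}\leq C\big(\sup e^{a|y|}|v|\big)^{1-|\gamma|/s}\|v\|_{\sigma,s,a}^{|\gamma|/s}$; since $\frac{|\beta|}{s}+\frac{|\gamma|}{s}\leq1$, Young's inequality bounds the product by $C\big(\|u\|_{\sigma,s,0}\sup(e^{a|y|}|v|)+\|u\|_{L^{\infty}}\|v\|_{\sigma,s,a}\big)$, and Step 1 replaces the two $L^{\infty}$ norms by $\|v\|_{\sigma,s_{0},a}$ and $\|u\|_{\sigma,s_{0},0}$. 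Summing over the finitely many $\beta,\gamma,k$ and $j\leq s$ gives \eqref{algebrassa}. Finally, since $e^{2a|y|}\geq1$ gives $\cH^{\sigma,s,a}\subseteq\cH^{\sigma,s,0}$ with $\|u\|_{\sigma,s,0}\leq\|u\|_{\sigma,s,a}$, applying \eqref{algebrassa} to the pair $(u,u^{j-1})$, using $C_{s_{0}}\leq C_{s}$ and the base-level bound $\|u^{i}\|_{\sigma,s_{0},a}\leq2C_{s}\|u\|_{\sigma,s_{0},a}\|u^{i-1}\|_{\sigma,s_{0},a}$, an induction on $j\geq1$ yields \eqref{potenze}.

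\textbf{Main difficulty.} The delicate points are the intermediate (balanced) Leibniz terms of Step 2, where neither factor carries few enough derivatives to be absorbed by a low norm, so that the Gagliardo--Nirenberg interpolation — rather than a naive ``algebra in $x$, trace in $y$'' splitting — is genuinely needed; and the requirement, in both \eqref{plan:Linf} and the weighted Gagliardo--Nirenberg inequality for $e^{a|y|}\pa^{\gamma}v$, that every constant be independent of the decay parameter $a\geq 0$. The latter is secured by never letting a derivative fall on the weight $e^{a|y|}$ — which is not even smooth at $y=0$ — and by discarding, at each integration by parts, the term with the favorable sign.
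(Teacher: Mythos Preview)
Your approach differs substantially from the paper's, and while the overall strategy (Leibniz rule + Gagliardo--Nirenberg interpolation, \`a la Moser) is a legitimate alternative route to tame product estimates, the implementation has two real gaps.

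\textbf{What the paper does instead.} The paper first constructs an extension operator $\cE_s:\cH^{\sigma,s,a}\to\cH^{\sigma,s,a}_\bR$ to the full cylinder $\bT^d\times\bR$ by reflection (Lemma~\ref{lem:ext}). On the full line one Fourier-transforms in $y$; the weight $e^{a|y|}$ becomes a complex shift $\xi\mapsto\xi\pm\im a$ (Lemma~\ref{lem:car2}), and the convolution $\widehat{uv}=\widehat u*\widehat v$ is controlled by splitting the frequency space $\{(m,\eta,k,\xi)\}$ according to whether $\langle m,\eta\rangle\leq 2\langle k,\xi\rangle$ or not, using $e^{\sigma|m|_1}\leq e^{\sigma|k|_1}e^{\sigma|m-k|_1}$ and Cauchy--Schwarz together with $\sum_k\int\langle k,\xi\rangle^{-2s_0}\de\xi<\infty$ for $s_0>\tfrac{d+1}{2}$ (Lemma~\ref{lem:tame}). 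The half-cylinder estimate follows from $\|uv\|_{\sigma,s,a}\leq\|\cE_s u\cdot\cE_s v\|_{\sigma,s,a,\bR}$. The induction for \eqref{potenze} is exactly as you wrote it.

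\textbf{The gaps in your argument.} First, when you ``distribute also the remaining $x$-derivatives'' to reduce to bounding $\|e^{a|y|}(\pa^\beta u)(\pa^\gamma v)\|_{L^2}$, you are treating $\|f\|_{H^{\sigma,m}}$ as equivalent to $\sum_{|\alpha|\leq m}\|\pa_x^\alpha f\|_{L^2}$. For $\sigma>0$ this is false: the Gevrey weight $e^{\sigma|k|_1}$ cannot be realized by a differential operator, so the physical-space $L^p$ norms you subsequently invoke do not see it. A fix exists --- use the strip characterization of Lemma~\ref{lem:equinorms}, run the whole argument on $u(\cdot+\im y_0,y)$ for each $|y_0|_\infty\leq\sigma$, and take the supremum --- but you have not done this, and it requires checking $\|u\|_{\sigma,s,a}\simeq_d\sup_{|y_0|_\infty\leq\sigma}\|u(\cdot+\im y_0,\cdot)\|_{0,s,a}$ separately.

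Second, the weighted Gagliardo--Nirenberg inequality $\|e^{a|y|}\pa^\gamma v\|_{L^q}\lesssim(\sup e^{a|y|}|v|)^{1-|\gamma|/s}\|v\|_{0,s,a}^{|\gamma|/s}$ is asserted without proof, and your justification (``never letting a derivative fall on the weight'', ``discarding the term with the favorable sign'') pertains to the integration-by-parts in Step~1, not to GN, which is an interpolation inequality not proved that way. If one sets $V=e^{-ay}v$ then $e^{-ay}\pa_y^j v=(\pa_y+a)^j V$, so what you need is GN for the operator $(\pa_y+a,\nabla_x)$ rather than $(\pa_y,\nabla_x)$; expanding the binomial introduces $a$-dependent constants, and a uniform bound is not automatic. (Incidentally, on $y\leq 0$ the weight $e^{a|y|}=e^{-ay}$ is perfectly smooth, contrary to your remark.) The paper's Fourier approach sidesteps both difficulties: the Gevrey weight is handled by subadditivity of $|\cdot|_1$ directly on Fourier coefficients, and the $a$-weight is absorbed once and for all into a complex shift of the dual variable, after which the convolution estimate is weight-free.
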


The next lemma proves the continuity 
of the harmonic function  $e^{y|D|}g $, which solves the Dirichlet-Neumann 
elliptic problem  \eqref{ell3},
with respect to  the Dirichlet datum $ g $ at  $ y = 0 $.  

\begin{lem}[{\bf Harmonic propagator}]\label{lem:und.varphi}
Let $\sigma\geq 0$ and $s + \frac12 \in \bN$.
Then, for any $g \in H^{\sigma, s}$,  the function 
$$
( e^{y|D|}g)(x) := \sum_{k\in \bZ^d} g_k \, e^{y|k|} \, e^{\im k\cdot x}
$$  
belongs to 
$ \bC \oplus \cH^{\sigma, s+\frac12 , a} $, $a \in (0,1)$,    
and the linear map 
$$
H^{\sigma, s} \to  
\cH^{\sigma, s+\frac12, a} \, , 
\quad g \mapsto   \Pi [ e^{y|D|}g ] = e^{y|D|}g -  g_0 \, , 
$$ 
is continuous.
\end{lem}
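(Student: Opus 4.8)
The plan is to reduce the statement to the explicit diagonal (in Fourier) structure of $e^{y|D|}$ plus one elementary one-dimensional integral. Set $s' := s+\tfrac12 \in \bN$. Since $(e^{y|D|}g)(x) = \sum_{k\in\bZd} g_k\, e^{y|k|}\, e^{\im k\cdot x}$, its $k=0$ mode is the constant $g_0$, so $e^{y|D|}g = g_0 + u$ where $u := \Pi[e^{y|D|}g]$ has Fourier coefficients $u_k(y) = g_k e^{y|k|}$ for $k\neq 0$ and $u_0 \equiv 0$. Hence $e^{y|D|}g \in \bC\oplus\cH^{\sigma,s',a}$ as soon as $u \in \cH^{\sigma,s',a}$, and the map $g \mapsto u$ is manifestly linear; so everything comes down to proving the bound $\|u\|_{\sigma,s',a} \leq C(s,a)\,\|g\|_{H^{\sigma,s}}$.

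First I would compute, for each $k\neq 0$ and $0\leq j\leq s'$, the quantity $\|\pa_y^j u_k\|_{L^{2,a}}^2$. Since $\pa_y^j u_k(y) = g_k |k|^j e^{y|k|}$ and, by \eqref{defuL2a}, the weight on $\bR_{\leq 0}$ is $e^{2a|y|}=e^{-2ay}$, one gets
$\|\pa_y^j u_k\|_{L^{2,a}}^2 = |g_k|^2|k|^{2j}\int_{-\infty}^0 e^{2y(|k|-a)}\,\de y$.
The integral converges precisely because $|k|\geq 1 > a$ (this is where $a\in(0,1)$ enters), and equals $\tfrac{1}{2(|k|-a)}$; thus $\|\pa_y^j u_k\|_{L^{2,a}}^2 = \tfrac{|g_k|^2|k|^{2j}}{2(|k|-a)}$.

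Then I would insert this into the series \eqref{migliore4} defining $\|u\|_{\sigma,s',a}^2$. Using $\langle k\rangle = |k|$ for $k\neq 0$, the product $\langle k\rangle^{2(s'-j)}|k|^{2j}$ collapses to $|k|^{2s'}$ independently of $j$, so summing over $j=0,\dots,s'$ only produces the factor $s'+1$, giving $\|u\|_{\sigma,s',a}^2 = \tfrac{s'+1}{2}\sum_{k\neq 0} e^{2\sigma|k|_1}\,\tfrac{|k|^{2s'}}{|k|-a}\,|g_k|^2$. Since $2s' = 2s+1$, I rewrite $\tfrac{|k|^{2s'}}{|k|-a} = |k|^{2s}\,\tfrac{|k|}{|k|-a}$ and use that $t\mapsto t/(t-a)$ is decreasing for $t\geq 1$, so $\tfrac{|k|}{|k|-a}\leq\tfrac{1}{1-a}$; this yields $\|u\|_{\sigma,s',a}^2 \leq \tfrac{s'+1}{2(1-a)}\sum_{k} e^{2\sigma|k|_1}\langle k\rangle^{2s}|g_k|^2 = \tfrac{s'+1}{2(1-a)}\,\|g\|_{H^{\sigma,s}}^2$, which is the desired estimate (the constant blows up as $a\uparrow 1$, consistently with $a\in(0,1)$). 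Convergence of the Fourier series of $u$ in $\cH^{\sigma,s',a}$ follows by applying the same bound to the tail sums, and continuity of $g\mapsto\Pi[e^{y|D|}g]$ is then immediate from the estimate and linearity.

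There is no genuine obstacle here: the computation is elementary, and the only point requiring care is the matching of the exponential weight $e^{2a|y|}$ with the decay $e^{y|k|}$ — i.e. checking $|k|>a$ so that the $y$-integral converges, which is exactly why one must take $a<1$ — together with the bookkeeping that integrating the decaying exponential returns one factor $|k|^{-1}$; this is precisely the "half-derivative gain" that upgrades the spatial regularity from $s$ (the Dirichlet datum $g$) to $s+\tfrac12$ in the weighted $y$-space $\cH^{\sigma,s+\frac12,a}$.
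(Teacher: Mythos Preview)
Your proof is correct and follows essentially the same route as the paper's: both expand in Fourier, compute $\|\pa_y^j u_k\|_{L^{2,a}}^2 = |g_k|^2|k|^{2j}/(2(|k|-a))$ via the elementary integral $\int_{-\infty}^0 e^{2(|k|-a)y}\,\de y$, use $\langle k\rangle=|k|$ for $k\neq 0$ to collapse the $j$-sum, and bound $\langle k\rangle^{2s+1}/(|k|-a)$ by $C_a\langle k\rangle^{2s}$. Your write-up is in fact slightly more careful than the paper's, making explicit the bound $|k|/(|k|-a)\leq 1/(1-a)$ and the correct count $s'+1$ of terms in the $j$-sum.
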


We now come back to  Theorem \ref{thm:DN}. 
The  key result of its proof is the following proposition regarding the solution of the  elliptic problem \eqref{ell2}.

The parameter $a \in (0,1)$ 
plays a technical 
role  in studying the decay as $y \to -\infty$ of the solution of the elliptic problem 
\eqref{ell2.0} (see in particular Lemma \ref{lem:T}). 
{\em  In the sequel  we fix $a = \frac12$.}

\begin{prop}\label{prop:varphi}
Let $\sigma\geq 0$  and $s,\, s_0$ such that  
$s+\frac12 $, $ s_0 \in \bN $ and 
$s-\frac32\geq s_0 > \frac{d+1}{2} $. Then there exist $\e_0 :=\e_0(s)>0$ and, 
 for any $ \eta \in H^{\sigma,s}\cap B^{\sigma,s_0+\frac32}(\e_0)$ and $\psi \in H^{\sigma, s}$,
 a unique solution $\varphi \in \bC \oplus \cH^{\sigma, s+\frac12, a}$ 
  of the elliptic problem \eqref{ell2}, satisfying
\begin{equation}\label{tamevarphi}
\| \Pi \varphi \|_{\sigma,s+\frac12,a} \leq C(s)\big( \|\psi\|_{H^{\sigma,s}} +\|\eta \|_{H^{\sigma,s}}\|\psi\|_{H^{\sigma,s_0+\frac32}}  \big) \, . 
\end{equation}
Moreover  $\varphi = \Psi(\eta)[\psi]$, where 
$\Psi$ is an analytic map  $H^{\sigma,s}\cap B^{\sigma,s_0+\frac32}(\e_0) \to$ $
  \cL(H^{\sigma,s}, \, \bC \oplus \cH^{\sigma, s+\frac12, a} )$, and 
  $\Psi(0) \psi= e^{y|D|}\psi$.
\end{prop}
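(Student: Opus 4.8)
The plan is to look for $\varphi = e^{y|D|}\psi + v$. By Lemma~\ref{lem:und.varphi} the harmonic lift $e^{y|D|}\psi$ belongs to $\bC\oplus\cH^{\sigma,s+\frac12,a}$, is annihilated by $\Delta_{x,y}$, has trace $\psi$ at $y=0$ and decays as $y\to-\infty$; hence the remainder $v$ must solve the Poisson problem with zero Dirichlet datum
\[
\Delta_{x,y} v \;=\; F(\eta)[v] \;+\; F(\eta)\big[e^{y|D|}\psi\big]\,,\qquad v(\cdot,0)=0\,,\qquad \pa_y v(\cdot,y)\to 0 \ \text{ as } y\to-\infty\,.
\]
Let $\mathcal T$ denote the solution operator of $\Delta_{x,y}w=g$, $w(\cdot,0)=0$, $\pa_y w(\cdot,y)\to 0$, which by the linear elliptic estimate of Lemma~\ref{lem:ell5} (and Lemma~\ref{lem:T} for the decay of the $k=0$ mode, where the weight $a$ enters) is bounded $\cH^{\sigma,m,a}\to\cH^{\sigma,m+2,a}$. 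Applying $\mathcal T$, the problem becomes the linear fixed point equation
\[
\big(\mathrm{Id}-\mathcal T F(\eta)\big)\,v \;=\; \mathcal T F(\eta)\big[e^{y|D|}\psi\big]\,,
\]
to be solved in $\cH^{\sigma,s+\frac12,a}$, and in the low--regularity space $\cH^{\sigma,s_0+2,a}$.

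The core of the argument is the control of $F(\eta)$ on the scale $\{\cH^{\sigma,m,a}\}$. Writing $\nabla\rho=e^{y|D|}\nabla\eta$, $\pa_y\rho-1=e^{y|D|}|D|\eta$, $\pa_y^2\rho=e^{y|D|}|D|^2\eta$ and expanding $(\pa_y\rho)^{-1}=\sum_{n\ge0}(-e^{y|D|}|D|\eta)^n$ — convergent once $\|e^{y|D|}|D|\eta\|$ is small, which holds for $\|\eta\|_{H^{\sigma,s_0+\frac32}}\le\e_0$ — Lemma~\ref{lem:und.varphi} together with the algebra and tame estimates of Proposition~\ref{lem:algebray} yield $\alpha(\eta),\beta(\eta),\gamma(\eta)\in\cH^{\sigma,s-\frac12,a}$ and $\delta(\eta)\in\cH^{\sigma,s-\frac32,a}$, each of low norm $O(\e_0)$ and high ($H^{\sigma,s}$--level) norm $O(\|\eta\|_{H^{\sigma,s}})$. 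Since $F(\eta)$ carries two derivatives, $F(\eta)$ maps $\cH^{\sigma,m+2,a}\to\cH^{\sigma,m,a}$, so that $\mathcal T F(\eta)$ maps $\cH^{\sigma,s_0+2,a}$ into itself with norm $O(\e_0)$; choosing $\e_0=\e_0(s)$ small it is a contraction there, and the fixed point equation has a unique solution $v\in\cH^{\sigma,s_0+2,a}$ (hence $\varphi\in\bC\oplus\cH^{\sigma,s+\frac12,a}$ is unique) with $\|v\|_{\sigma,s_0+2,a}\lesssim\|\psi\|_{H^{\sigma,s_0+\frac32}}$. To obtain \eqref{tamevarphi} one re-inserts $v$ into the equation and estimates in $\cH^{\sigma,s+\frac12,a}$ by the tame product bound of Proposition~\ref{lem:algebray}: the term $\|\mathcal T F(\eta)v\|_{\sigma,s+\frac12,a}$ splits as $O(\e_0)\|v\|_{\sigma,s+\frac12,a}+O(\|\eta\|_{H^{\sigma,s}})\|v\|_{\sigma,s_0+2,a}$, the first summand is absorbed on the left, and using $\|\Pi[e^{y|D|}\psi]\|_{\sigma,s+\frac12,a}\lesssim\|\psi\|_{H^{\sigma,s}}$ from Lemma~\ref{lem:und.varphi} one concludes.

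Finally, each of $\alpha(\eta),\beta(\eta),\gamma(\eta),\delta(\eta)$ is an analytic function of $\eta$ with values in the relevant $\cH^{\sigma,m,a}$, being a polynomial in $\eta,\nabla\eta,|D|\eta,\dots$ composed with the analytic map $t\mapsto(1+t)^{-1}$ near the origin; hence $\eta\mapsto F(\eta)\in\cL(\cH^{\sigma,s+\frac12,a},\cH^{\sigma,s-\frac32,a})$, and so $\eta\mapsto\mathcal T F(\eta)\in\cL(\cH^{\sigma,s+\frac12,a})$, is analytic on $H^{\sigma,s}\cap B^{\sigma,s_0+\frac32}(\e_0)$. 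Moreover $\mathrm{Id}-\mathcal T F(\eta)$ is invertible on $\cH^{\sigma,s+\frac12,a}$ (injectivity from the low--space contraction, surjectivity from the bootstrap just performed), so composing with the analytic inversion map on the open set of invertible operators gives that $\eta\mapsto(\mathrm{Id}-\mathcal T F(\eta))^{-1}$ is analytic; therefore
\[
\Psi(\eta)[\psi]\;:=\;e^{y|D|}\psi+\big(\mathrm{Id}-\mathcal T F(\eta)\big)^{-1}\mathcal T F(\eta)\big[e^{y|D|}\psi\big]
\]
is analytic in $\eta$, linear and bounded in $\psi$, with values in $\bC\oplus\cH^{\sigma,s+\frac12,a}$, and $F(0)=0$ gives $\Psi(0)\psi=e^{y|D|}\psi$. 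I expect the main obstacle to be the bookkeeping of the coefficient estimates — in particular handling the nonlinear denominator $1/\pa_y\rho$ via the algebra property and tracking the correct decay as $y\to-\infty$ (the role of $a$); granted the linear elliptic estimate of Lemma~\ref{lem:ell5} on which everything rests, the remainder is a Neumann series plus a one--step elliptic bootstrap.
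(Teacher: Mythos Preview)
Your proposal is correct and follows essentially the same route as the paper: the decomposition $\varphi=e^{y|D|}\psi+v$, the reduction to the fixed point equation $(\mathrm{Id}-\mathcal T F(\eta))v=\mathcal T F(\eta)[e^{y|D|}\psi]$ via the elliptic solution operator of Lemma~\ref{lem:ell5}, the coefficient estimates for $\alpha,\beta,\gamma,\delta$ (the paper's Lemma~\ref{lem:non}), and the low--norm contraction plus tame bootstrap are exactly what the paper does in Lemma~\ref{lem:lasol}. The only cosmetic differences are that the paper writes out the Neumann series and proves analyticity via uniform convergence (Weierstrass) rather than invoking analytic operator inversion, and it keeps track of the constant component by working in $\bC\oplus\cH^{\sigma,s+\frac12,a}$ throughout (your $\mathcal T$ lands in $\bC\oplus\cH$, not $\cH$, since the $k=0$ mode of the Poisson solution carries a constant; this is harmless because $F(\eta)$ kills constants).
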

Postponing the proof of this proposition, we first use it to deduce Theorem  \ref{thm:DN}.

\smallskip

\noindent{\bf Proof of Theorem  \ref{thm:DN}.}
By Proposition \ref{prop:varphi}, for any  
$ \eta \in H^{\sigma, s}\cap B^{\sigma,s_0+\frac32}(\e_0)$ and $ \psi \in  H^{\sigma, s}$, 
there exists a unique solution $ \varphi \in \bC \oplus \cH^{\sigma, s+ \frac12, a} $ 
of 
\eqref{ell2}. The Dirichlet-Neumann operator is computed in \eqref{DN2}.
Since $ \varphi (x,0) = \psi (x) $, using the  trace operator $  \Gamma(u)  = u(\cdot, 0) $
in \eqref{optraccia}, and recalling the definition of $ \Pi $ in \eqref{coplusH}, 
we rewrite   \eqref{DN2} as 
\begin{align}
G(\eta)\psi   &= - \nabla\eta\cdot \nabla \psi+ 
 \frac{1+ |\nabla\eta|^2}{1+(|D|\eta)} \,  \Gamma[ \pa_y \varphi ] \notag 
 \\ \label{DN3bis}
 &=\underbrace{ - \nabla\eta\cdot \nabla \psi}_{=: G_1(\eta)\psi}
 +
 \underbrace{ \Gamma[ \pa_y \Pi \varphi ]}_{=: G_2(\eta)\psi} 
 + 
\underbrace{ \frac{|\nabla\eta|^2-(|D|\eta)}{1+(|D|\eta)} \,  \Gamma[ \pa_y \Pi \varphi ]}_{=:  G_3(\eta)\psi} \, .
\end{align}
We prove that each map 
\begin{equation}\label{Giana}
G_i : 
H^{\sigma, s}\cap B^{\sigma, s_0+\frac32}(\e_0)  \to \cL(H^{\sigma, s}, H^{\sigma, s-1}) \, ,
\ \  i =1, 2, 3 \, , \quad \text{is analytic} \, , 
\end{equation}
and fulfills the tame estimate
\eqref{tameGeta}.
Regarding $G_1(\eta)\psi$, it suffices to note that it is  linear in $\eta$ and  by \eqref{tameHs}, 
\begin{equation} \label{stima1} 
\|\nabla\eta\cdot \nabla \psi\|_{H^{\sigma,s-1}} \lesssim_s \|\eta \|_{H^{\sigma,s_0+\frac12}} \|  \psi\|_{H^{\sigma,s}} + \|\eta \|_{H^{\sigma,s}} \|  \psi\|_{H^{\sigma,s_0+\frac12}} \, . 
 \end{equation} 
Next we consider $G_2(\eta)\psi =  \Gamma[\pa_y \Pi \Psi(\eta)\psi] $. 
By Lemma \ref{lem:prop} and  \ref{lem:trace}, the map $\varphi \mapsto  \Gamma[\pa_y \Pi \varphi] \in \cL(\bC \oplus \cH^{\sigma, s+\frac12, a}, H^{\sigma, s-1})$ which, together with the 
analyticity  of $\eta \mapsto \Psi(\eta)$ stated in Proposition \ref{prop:varphi}, implies  the analyticity of $\eta \mapsto G_2(\eta)$ as in \eqref{Giana}.
Moreover by 
\eqref{est:trace}, Lemma \ref{lem:prop} and \eqref{tamevarphi}, we have
\begin{equation}\label{stima2} 
\| \Gamma[\pa_y  \Pi \varphi]\|_{H^{\sigma,s-1}} \leq \|\pa_y \Pi \varphi \|_{\sigma,s-\frac12,0} \lesssim_s 
 \|\psi\|_{H^{\sigma,s}} +\|\eta \|_{H^{\sigma,s}}\|\psi\|_{H^{\sigma,s_0+\frac32}}  \, .
\end{equation}
Finally consider $G_3(\eta)\psi = f(\eta) G_2(\eta)\psi$, where 
$f(\eta)$ is the multiplication operator by the function 
\begin{equation}\label{domsvif}
 f(\eta) = \frac{|\nabla\eta|^2-(|D|\eta)}{1+(|D|\eta)}
 =  \big( |\nabla\eta|^2-(|D|\eta) \big) \sum_{j=0}^\infty (-|D|\eta)^j \, .
 \end{equation}
By Lemma \ref{lem:surfacingsubalgebra} we have that 
$ \| (|D| \eta)^j \|_{H^{\sigma, s-1}}
\leq ( C(s) \| \eta \|_{H^{\sigma, s_0 + \frac32}})^j
   \|\eta \|_{H^{\sigma,s}} $ for any  $  j \in \bN $,  
and therefore  $ f(\eta)$ in \eqref{domsvif}
is bounded,  on the domain  $H^{\sigma, s}\cap B^{\sigma, s_0+\frac32}(\e_0)  $, by
\begin{equation}\label{boufeta}
\| f(\eta) \|_{H^{\sigma, s-1}}
\lesssim_s 
   \| \eta \|_{H^{\sigma,s}} \ .
\end{equation}
Moreover  $f(\eta)$ in \eqref{domsvif}
  is a series of analytic functions 
 uniformly convergent on the sets $B^{\sigma,s}(R) \cap B^{\sigma, s_0+\frac32}(\e_0) $, $\forall R >0$.
Thus, by Weierstrass theorem, 
$\eta \mapsto f(\eta)$ is analytic on
$B^{\sigma,s}(R) \cap B^{\sigma, s_0+\frac32}(\e_0)$, and, by the arbitrariness of $R$, on the whole open set 
$H^{\sigma, s}\cap B^{\sigma, s_0+\frac32}(\e_0)  \to H^{\sigma, s-1} $.
We conclude that also $G_3  (\eta) $  is analytic
as stated  in \eqref{Giana}. 
Finally,  \eqref{boufeta} and \eqref{stima2} imply that $ G_3 (\eta) $ satisfies
the tame estimate \eqref{tameGeta}.
\qed

\begin{rmk}
It  follows from the proof that $G(0)\psi = G_2(0)\psi = \Gamma[\pa_y \Pi \Psi(0)\psi]$, which, together with  $\Psi(0) \psi= e^{y|D|}\psi$,  recovers the identity $G(0)\psi = |D| \psi$.
\end{rmk}
\vspace{1em}

The final paragraph is devoted to the  proof of Proposition \ref{prop:varphi}.

\paragraph{Proof of Proposition \ref{prop:varphi}: the perturbative argument.}  
We look for a solution $\varphi$ of \eqref{ell2} of the form
\begin{equation}
\label{eq:varphi}
 \varphi(x, y) =  \underline{\varphi}(x, y) + u(x, y)
\end{equation}
where $ \underline{\varphi}$ is the harmonic solution of 
\begin{equation}
\label{ell3}
\begin{cases}
\Delta_{x,y} \underline{\varphi} =0  \\
\underline{\varphi}(x,0) = \psi(x) &  \\
\pa_y \underline{\varphi}(x,y) \to 0 & \mbox{ as } y \to - \infty  \, , 
\end{cases} 
\qquad
\mbox{ i.e. }  \quad  
\underline{\varphi}(x,y) := e^{y|D|}\psi(x) \, ,  
\end{equation}
whereas
$u $ solves the elliptic problem
\begin{equation}\label{ell4}
\begin{cases}
\Delta_{x,y} u = F(\eta)[\phi + u]  \, , \qquad   \\
u(x,0) =0 &  \\
\pa_y u(x,y) \to 0 \qquad \mbox{ as } y \to - \infty \, ,
\end{cases}
\end{equation}
with $\phi :=\underline{\varphi}$. 
The harmonic function $ \underline{\varphi} = e^{y|D|} \psi  $ 
ia estimated by Lemma \ref{lem:und.varphi}. 
\begin{rmk}\label{remdecay}
Also the derivative $ \pa_x  \underline{\varphi} (x,y) \to 0 $ as $ y \to - \infty $. 
Actually any solution of \eqref{Phi} satisfies $ \nabla \Phi (x,y) \to 0 $ as
$ y \to - \infty $. Indeed let $ a $ such that $ \bT \times \{ y = - a \} \subset {\cal D}_\eta $. 
Since the harmonic function 
$ \Phi (x, y )$ is analytic then $ \vartheta (x) := \Phi (x, -a)$   is analytic as well. Thus
(cfr. \eqref{ell3}) we can represent 
$ \Phi (x,y) = \sum_{k \in \bZd}   \vartheta_k \, e^{|k| (y+a)} \, e^{\im k\cdot x} $, which proves that
$$ 
\nabla \Phi (x,y) = \sum_{k \in \bZd \setminus \{0\}}  
\im k \,  \vartheta_k \, e^{|k| (y+a)} \, e^{\im k\cdot x}  \to 0 \quad \text{as}
\quad y \to - \infty \, , 
$$ 
actually  exponentially fast. 
\end{rmk}
The solution of system \eqref{ell4} is given  by  the following lemma. 
\begin{lem}\label{lem:lasol}
Let $\sigma\geq 0$ and $s,\, s_0$ such that  
$s+\frac12,\, s_0 \in \bN $ and $s-\frac32\geq s_0 > \frac{d+1}{2} $. Then there exist $\e_0 :=\e_0(s)>0$ and a unique analytic map
\begin{align*}
\eta \mapsto U(\eta) \, , \quad
U: H^{\sigma,s}\cap B^{\sigma, s_0+\frac32}(\e_0) \longrightarrow \cL \big( \bC \oplus \cH^{\sigma, s+\frac12,a}\big)\, ,
\end{align*}
such that $u = U(\eta)[\phi]=U(\eta)[\Pi \phi]$, with $\Pi $ in \eqref{coplusH},
solves  \eqref{ell4}, satisfying  
\begin{equation}\label{tameu}
\|\Pi U(\eta)[\phi]\|_{\sigma,s+\frac12,a} \leq C(s )\big( \|\eta \|_{H^{\sigma,s_0+\frac32}}\|\Pi \phi\|_{\sigma,s+\frac12,a} +\|\eta \|_{H^{\sigma,s}}\|\Pi \phi\|_{\sigma,s_0+2,a}  \big) \, . 
\end{equation}
\end{lem}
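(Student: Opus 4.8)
The plan is to find the solution $u$ of \eqref{ell4} as the fixed point of an equation built from the Poisson solution operator, exploiting that the coefficients of the perturbation $F(\eta)$ in \eqref{def:f} vanish at $\eta=0$. The ingredient I would invoke is the linear elliptic estimate (Lemma~\ref{lem:ell5}): it provides a bounded linear operator $\mathcal{T}$ that solves the flat Poisson problem $\Delta_{x,y}u=g$, $u(\cdot,0)=0$, $\pa_y u\to0$ as $y\to-\infty$, gaining two $y$-derivatives, i.e. $\mathcal{T}$ maps $\cH^{\sigma,m,a}$ boundedly into $\bC\oplus\cH^{\sigma,m+2,a}$ for the relevant indices. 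Since $F(\eta)$ is a second order differential operator in $\varphi$ and kills constants, $F(\eta)[\phi+u]=F(\eta)[\Pi\phi+u]$, so \eqref{ell4} is equivalent to $u=\mathcal{T}F(\eta)[\Pi\phi+u]$, that is $(I-\cL_\eta)u=\cL_\eta\Pi\phi$ with $\cL_\eta:=\mathcal{T}F(\eta)$ acting on $\bC\oplus\cH^{\sigma,s+\frac12,a}$.

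Next I would estimate $\cL_\eta$. From the explicit formulas \eqref{f}, Lemma~\ref{lem:und.varphi} and the tame product estimate of Proposition~\ref{lem:algebray}, the coefficients $\beta,\gamma$ are linear images of $\eta$ lying in $\cH^{\sigma,s-\frac12,a}$, while $\alpha$ and $\delta$ --- the latter of lowest regularity $\cH^{\sigma,s-\frac32,a}$, since it contains $\pa_y^2\rho=e^{y|D|}|D|^2\eta$ --- are analytic functions of $\eta$, obtained by composing polynomials in $e^{y|D|}\eta,\,e^{y|D|}\nabla\eta,\,e^{y|D|}|D|\eta$ with the map $t\mapsto(1+t)^{-1}$, whose Neumann series converges in $\cH^{\sigma,s-\frac12,a}$ uniformly on $B^{\sigma,s_0+\frac32}(\e_0)$ by \eqref{potenze}. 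Counting derivatives, $F(\eta)$ sends $\cH^{\sigma,s+\frac12,a}$ into $\cH^{\sigma,s-\frac32,a}$ (this is where $s-\frac32\ge s_0$ enters), so $\cL_\eta\in\cL(\cH^{\sigma,s+\frac12,a})$; moreover, keeping track of which Sobolev norm of $\eta$ controls the low and the top norm of each coefficient (e.g. $\|\delta\|_{\sigma,s_0,a}\lesssim_s \|\eta\|_{H^{\sigma,s_0+\frac32}}$ and $\|\delta\|_{\sigma,s-\frac32,a}\lesssim_s \|\eta\|_{H^{\sigma,s}}$), the tame estimate \eqref{algebrassa} gives $\|\cL_\eta\varphi\|_{\sigma,s+\frac12,a}\lesssim_s \|\eta\|_{H^{\sigma,s_0+\frac32}}\|\varphi\|_{\sigma,s+\frac12,a}+\|\eta\|_{H^{\sigma,s}}\|\varphi\|_{\sigma,s_0+2,a}$. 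In particular $\|\cL_\eta\|_{\cL(\cH^{\sigma,s_0+2,a})}\le C(s)\e_0$.

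The fixed point is then solved in two tiers. Since $\cH^{\sigma,s_0,a}$ is an algebra, the previous bound gives $\|\cL_\eta\|_{\cL(\cH^{\sigma,s_0+2,a})}\le\frac12$ for $\e_0$ small, so $I-\cL_\eta$ is invertible on $\bC\oplus\cH^{\sigma,s_0+2,a}$ by a Neumann series and $u:=(I-\cL_\eta)^{-1}\cL_\eta\Pi\phi$ is the unique solution of \eqref{ell4} in that space, with $\|u\|_{\sigma,s_0+2,a}\lesssim_s \|\eta\|_{H^{\sigma,s_0+\frac32}}\|\Pi\phi\|_{\sigma,s_0+2,a}$. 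To upgrade to \eqref{tameu}, I would plug $u=\cL_\eta\Pi\phi+\cL_\eta u$ into the $s+\frac12$-tame estimate, insert the low bound just found for $\|u\|_{\sigma,s_0+2,a}$, and absorb the term $C(s)\e_0\|u\|_{\sigma,s+\frac12,a}$ on the left-hand side; running the same computation with $\cL_\eta\Pi\phi$ replaced by an arbitrary $g\in\bC\oplus\cH^{\sigma,s+\frac12,a}$ shows that $I-\cL_\eta$ is in fact invertible in $\cL(\bC\oplus\cH^{\sigma,s+\frac12,a})$, with $\|(I-\cL_\eta)^{-1}\|\lesssim_s 1+\|\eta\|_{H^{\sigma,s}}$. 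Setting $U(\eta):=(I-\cL_\eta)^{-1}\cL_\eta\,\Pi$ gives the operator of the statement, with $U(0)=0$ since $F(0)=0$. For analyticity: the coefficient maps $\eta\mapsto\alpha(\eta),\dots,\delta(\eta)$ are analytic (polynomials composed with the uniformly convergent Neumann series above, hence analytic by Weierstrass), so $\eta\mapsto F(\eta)\in\cL(\cH^{\sigma,s+\frac12,a},\cH^{\sigma,s-\frac32,a})$ and $\eta\mapsto\cL_\eta\in\cL(\bC\oplus\cH^{\sigma,s+\frac12,a})$ are analytic on the domain; since $I-\cL_\eta$ is invertible in $\cL(\bC\oplus\cH^{\sigma,s+\frac12,a})$ for every such $\eta$ and operator inversion is analytic on the open set of invertibles, $\eta\mapsto(I-\cL_\eta)^{-1}$, hence $\eta\mapsto U(\eta)$, is analytic into $\cL(\bC\oplus\cH^{\sigma,s+\frac12,a})$.

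The main difficulty is precisely the tame (as opposed to uniform) character of the bound on $\cL_\eta$: its operator norm on $\cH^{\sigma,s+\frac12,a}$ involves $\|\eta\|_{H^{\sigma,s}}$, which is \emph{not} assumed small, so neither the contraction nor the Neumann inversion can be run directly at regularity $s+\frac12$. This is what forces the two-tier scheme --- first solve where smallness is genuine, in $\cH^{\sigma,s_0+2,a}$, then bootstrap the top-regularity bound and invertibility --- and it is also what dictates the precise indices in the hypotheses: the delicate point is the bookkeeping showing that $\delta(\eta)$, though it loses $\frac32$ derivatives, is still controlled in $\cH^{\sigma,s_0,a}$ by $\|\eta\|_{H^{\sigma,s_0+\frac32}}$ and in $\cH^{\sigma,s-\frac32,a}$ by $\|\eta\|_{H^{\sigma,s}}$, exactly the two norms appearing in \eqref{tameu}.
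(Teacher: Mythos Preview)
Your proposal is correct and follows the same route as the paper: set $\cL_\eta:=L\circ F(\eta)$ (the paper writes $P(\eta)$), obtain the tame bound \eqref{indbase} from Lemmata~\ref{lem:ell5} and~\ref{lem:non}, and invert $I-\cL_\eta$ via a Neumann series to define $U(\eta)=(I-\cL_\eta)^{-1}\cL_\eta$. The only difference is organizational: the paper controls each iterate $\|P(\eta)^j[\phi]\|_{\sigma,s+\frac12,a}$ by the explicit inductive bound \eqref{indalta} (linear in $j$, damped by $\|\eta\|_{H^{\sigma,s_0+3/2}}^{j-1}$) and then sums, whereas your two-tier scheme---invert at the low index $s_0+2$ where smallness is genuine, then bootstrap once through the equation and absorb $C(s)\e_0\|u\|_{\sigma,s+\frac12,a}$---reaches the same estimate \eqref{tameu} without tracking the individual iterates; correspondingly, the paper gets analyticity from Weierstrass (uniform convergence of the series on balls), while you invoke analyticity of operator inversion.
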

The proof of  Lemma \ref{lem:lasol} relies on 
Lemmata \ref{lem:ell5} and \ref{lem:non} below. 

Given a function $ g(x, y) $ defined in $ \bT^d \times (- \infty, 0 )$, we  first 
 consider the linear elliptic problem
\begin{equation}\label{ell5}
\begin{cases}
\Delta_{x,y} u = g  \\
u(x,0) =0 &  \\
\pa_y u(x,y) \to 0 & \mbox{ as } y \to - \infty  \, . 
\end{cases}
\end{equation}
The following key lemma is proved in Appendix \ref{sec:sol.ell}.
\begin{lem}[{\bf Elliptic regularity}]\label{lem:ell5}
Fix $\sigma \geq 0$, $s \in \bN_0 $  and $a \in (0,1)$. 
For any  $g \in \cH^{ \sigma, s,  a}$, the  elliptic problem 
\eqref{ell5}
has a unique solution 
$u := L (g)  \in \bC \oplus  \cH^{\sigma, s+2, a} $. 
The linear map 
$$
L  :  \cH^{ \sigma, s,  a}\to \bC \oplus  \cH^{ \sigma, s+2,  a} \, , \quad 
 g \mapsto L(g) \, , 
$$ 
is continuous, i.e. there exists $ C_{a} > 0 $ such that 
$ \| Lg \|_{ \sigma, s+2, a} \leq C_{a} \| g\|_{\sigma, s,a} $.
\end{lem}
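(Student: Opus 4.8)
The plan is to solve the elliptic problem \eqref{ell5} explicitly on the Fourier side in $x$, reducing it to a family of one-dimensional ODE boundary value problems in $y$, and then to establish the claimed estimates by working modewise and summing.

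First I would take the Fourier expansion $u(x,y) = \sum_{k \in \bZ^d} u_k(y) e^{\im k \cdot x}$ and $g(x,y) = \sum_{k} g_k(y) e^{\im k \cdot x}$. The equation $\Delta_{x,y} u = g$ becomes, for each $k$, the ODE $u_k''(y) - |k|^2 u_k(y) = g_k(y)$ on $(-\infty, 0]$, with boundary conditions $u_k(0) = 0$ and $u_k'(y) \to 0$ as $y \to -\infty$ (the latter interpreted appropriately; for $k = 0$ the condition $\pa_y u \to 0$ is the relevant one, and one must track the additive constant, which is why the target space is $\bC \oplus \cH^{\sigma,s+2,a}$). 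For $k \neq 0$ the bounded Green's function for $\pa_y^2 - |k|^2$ on the half-line with these conditions gives
\begin{equation}\label{Greenmode}
u_k(y) = -\frac{1}{2|k|}\int_{-\infty}^0 \Big( e^{-|k||y - z|} - e^{|k|(y+z)} \Big) g_k(z)\, \de z \, ,
\end{equation}
and for $k = 0$ one integrates twice, $u_0(y) = \int_0^y \int_{-\infty}^{t} g_0(z)\, \de z\, \de t$, which produces the constant term $c = u_0(-\infty) \in \bC$ together with a function vanishing at $y = 0$ and with $\pa_y u_0 \to 0$. One checks directly that \eqref{Greenmode} satisfies $u_k(0) = 0$ and decays, and uniqueness follows since the homogeneous equation has only solutions $A e^{|k|y} + B e^{-|k|y}$, and the boundary and decay conditions force $A = B = 0$ (for $k\ne 0$); for $k=0$ the homogeneous solutions $A + By$ are killed by $u_0(0)=0$ and $\pa_y u_0 \to 0$ up to the constant, consistent with working in $\bC \oplus \cH$.

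Next I would prove the quantitative bound. Since the norm $\|\cdot\|_{\sigma,s+2,a}$ is, by \eqref{migliore4}, a weighted sum over $k$ and over $j = 0, \ldots, s+2$ of $e^{2\sigma|k|_1}\langle k\rangle^{2(s+2-j)} \|\pa_y^j u_k\|_{L^{2,a}}^2$, it suffices to prove the modewise estimate: there is $C_a > 0$, independent of $k$, such that for each $j \le s+2$,
\begin{equation}\label{modewisegoal}
\langle k\rangle^{s+2-j}\|\pa_y^j u_k\|_{L^{2,a}} \leq C_a \sum_{i=0}^{s} \langle k \rangle^{s-i}\|\pa_y^i g_k\|_{L^{2,a}} \, .
\end{equation}
To get this, note $\pa_y^2 u_k = g_k + |k|^2 u_k$, so by induction all higher $y$-derivatives of $u_k$ are controlled by $g_k$ and its $y$-derivatives and by lower-order quantities; the crux is thus the base cases $\|u_k\|_{L^{2,a}}$, $\|\pa_y u_k\|_{L^{2,a}}$ with the gain of two powers of $\langle k\rangle$. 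Using \eqref{Greenmode} one estimates the convolution kernel $\frac{1}{2|k|}(e^{-|k||y-z|} - e^{|k|(y+z)})$ against the weight $e^{a|y|}$; the point is that convolution with $\frac{1}{2|k|}e^{-|k||\cdot|}$ on $L^{2,a}$ has operator norm $O(1/|k|^2)$ provided $a < |k|$, i.e. for all $|k| \ge 1$ since $a \in (0,1)$, and the reflected term $e^{|k|(y+z)}$ is handled similarly (here the restriction $a<1\le |k|$ is exactly what makes the reflected exponential $L^{2,a}$-integrable with the right constant). This is the step I expect to be the main obstacle: one must be careful that the constant is uniform in $k$ and that the weight $e^{a|y|}$ interacts correctly with both terms of the kernel, especially near $|k| = 1$ where the gap between $a$ and $|k|$ is smallest. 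Differentiating \eqref{Greenmode} in $y$ costs one power of $|k|$ on the kernel, giving the $\langle k\rangle$-gain of order one for $\pa_y u_k$; the remaining derivatives come from the ODE as above. Summing \eqref{modewisegoal} squared against the weights $e^{2\sigma|k|_1}$ and using Cauchy–Schwarz in the finite sum over $i$ yields $\|Lg\|_{\sigma,s+2,a} \le C_a \|g\|_{\sigma,s,a}$, and the $k=0$ contribution is estimated directly by two integrations, completing the proof.
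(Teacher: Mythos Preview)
Your proposal is correct and follows essentially the same route as the paper's proof: Fourier-expand in $x$, solve the resulting ODEs explicitly via the Green's function (your formula \eqref{Greenmode} coincides with the paper's expression in terms of the auxiliary operators $T_{|k|}$, $\widetilde T_{|k|}$), estimate the $L^{2,a}$ operator norm of the kernel to gain two powers of $\langle k\rangle$, and bootstrap higher $y$-derivatives from the equation $\pa_y^2 u_k = g_k + |k|^2 u_k$. The only cosmetic difference is that the paper packages the kernel bounds into a separate technical lemma on $T_\lambda$, $\widetilde T_\lambda$, whereas you phrase them as convolution operator norms; the content is identical.
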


Thanks to Lemma \ref{lem:ell5}, we recast the nonlinear elliptic problem \eqref{ell4} into the equation
\begin{equation}
\label{ell10}
\big(\uno - L \circ F(\eta)\big) [u] = L\circ F(\eta)[\phi] \, .
\end{equation} 
Note that the linear operator $\uno - L \circ F(\eta) $ depends non-linearly on $\eta$
and that, recalling \eqref{def:f}, 
$ F(\eta)[\phi] = 
F(\eta) [\Pi \phi ] $
depends only on the component $ \Pi \phi \in  \cH^{\sigma,s,a}$ of $ \phi $
defined in \eqref{coplusH}, 
for the presence of the derivatives $ \pa_{y},  \pa_{yy}, \nabla\pa_{y} $.   
In the next lemma we study the regularity of the nonlinear map $\eta \mapsto F(\eta)$.
\begin{lem} 
\label{lem:non}
Let $\sigma \geq 0$,  $s + \frac12, s_0 \in \bN$  with 
$ s-\frac32  \geq s_0 > \frac{d+1}{2} $. 
There exists $\e_0 :=\e_0(s) >0$ such that the nonlinear map
\begin{alignat*}{2}
F : H^{\sigma,s}&\cap B^{\sigma, s_0+\frac32}(\e_0) &&\to \cL\big(\bC \oplus \cH^{\sigma, s+\frac12, a}, \cH^{\sigma, s-\frac32,a} \big) \, , \\
&\; \eta &&\mapsto \qquad
\big\{\, \phi \mapsto F(\eta)[\phi]\, \big\}  \, , 
\end{alignat*}
defined in \eqref{def:f} is analytic and satisfies the tame estimate
\begin{equation}\label{stimalineare}
\|F(\eta)[\phi]\|_{\sigma,s-\frac32,a} \leq C(s) \big( \|\eta\|_{H^{\sigma,s_0+\frac32}} \|\Pi\phi\|_{\sigma,s+\frac12,a} + \|\eta\|_{H^{\sigma,s}} \|\Pi\phi\|_{\sigma,s_0+2,a}  \big) \, .
\end{equation}
\end{lem}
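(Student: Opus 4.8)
The plan is to reduce the statement of Lemma~\ref{lem:non} to a list of elementary estimates on the coefficient functions $\alpha(\eta),\beta(\eta),\gamma(\eta),\delta(\eta)$ defined in \eqref{f}, together with the algebra and tame estimates of Proposition~\ref{lem:algebray} and the mapping properties of the harmonic propagator from Lemma~\ref{lem:und.varphi}. First I would record that, by Lemma~\ref{lem:prop}, the differential operators $\pa_y,\pa_y^2,\nabla\pa_y$ map $\cH^{\sigma,s+\frac12,a}$ (respectively $\bC\oplus\cH^{\sigma,s+\frac12,a}$, killing the constant) continuously into $\cH^{\sigma,s-\frac32,a}$: precisely $\|\pa_y\Pi\phi\|_{\sigma,s-\frac12,a}$, $\|\pa_y^2\Pi\phi\|_{\sigma,s-\frac32,a}$, $\|\nabla\pa_y\Pi\phi\|_{\sigma,s-\frac32,a}$ are all $\lesssim_s\|\Pi\phi\|_{\sigma,s+\frac12,a}$, and similarly with $s+\frac12$ replaced by $s_0+2$ on both sides (using $s-\frac32\ge s_0$, hence $s_0+2\le s+\frac12$, so $\cH^{\sigma,s+\frac12,a}\hookrightarrow\cH^{\sigma,s_0+2,a}$ at the level of the low-norm terms). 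So $F(\eta)[\phi]$ is a sum of four terms, each of the form (coefficient)$\cdot$(derivative of $\Pi\phi$), and it remains to estimate the coefficients in $\cH^{\sigma,s-\frac32,0}$ and in $\cH^{\sigma,s_0,0}$ and then apply \eqref{algebrassa}.

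Next I would estimate the coefficients. The building blocks are $e^{y|D|}|D|\eta$ and $e^{y|D|}\nabla\eta$; by Lemma~\ref{lem:und.varphi} applied to $g=|D|\eta$ and $g=\pa_{x_i}\eta$ (which lie in $H^{\sigma,s-1}$, resp.\ $H^{\sigma,s-1}$, when $\eta\in H^{\sigma,s}$), these belong to $\bC\oplus\cH^{\sigma,s-\frac12,a}$ with $\|e^{y|D|}|D|\eta\|_{\sigma,s-\frac12,a}\lesssim\|\eta\|_{H^{\sigma,s}}$ and likewise for the gradient; in the low norm one gets $\lesssim\|\eta\|_{H^{\sigma,s_0+\frac32}}$. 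Then $\beta(\eta)=-e^{y|D|}|D|\eta$ and $\gamma(\eta)=2e^{y|D|}\nabla\eta$ are immediate. For $\alpha(\eta)$ one writes, as in \eqref{f}, $\alpha(\eta)=\big(e^{y|D|}|D|\eta-|e^{y|D|}\nabla\eta|^2\big)\sum_{j\ge0}(-e^{y|D|}|D|\eta)^j$; using the algebra property \eqref{algebrassa} and the power estimate \eqref{potenze}, the Neumann series converges in $\cH^{\sigma,s-\frac32,0}$ provided $\|e^{y|D|}|D|\eta\|_{\sigma,s_0,0}$ (controlled by $\|\eta\|_{H^{\sigma,s_0+\frac32}}$) is smaller than $\e_0$ chosen small, and one gets $\|\alpha(\eta)\|_{\sigma,s-\frac32,0}\lesssim_s\|\eta\|_{H^{\sigma,s}}$, $\|\alpha(\eta)\|_{\sigma,s_0,0}\lesssim_s\|\eta\|_{H^{\sigma,s_0+\frac32}}$. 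The coefficient $\delta(\eta)$ is the most involved: it contains $\nabla\pa_y\rho,\pa_y^2\rho,\Delta\rho$, i.e.\ up to two derivatives of $\rho=y'+e^{y'|D|}\eta$, hence effectively $e^{y|D|}$ applied to $|D|^2\eta$, which only lives in $H^{\sigma,s-2}$; this is exactly why the solution space is $\cH^{\sigma,s+\frac12,a}$ and the target is $\cH^{\sigma,s-\frac32,a}$ rather than $\cH^{\sigma,s-\frac12,a}$. One estimates each factor via Lemma~\ref{lem:und.varphi}, multiplies them using \eqref{algebrassa}, and handles the two explicit denominators $1/\pa_y\rho$ by the same Neumann-series trick, obtaining $\|\delta(\eta)\|_{\sigma,s-\frac32,0}\lesssim_s\|\eta\|_{H^{\sigma,s}}$ and $\|\delta(\eta)\|_{\sigma,s_0,0}\lesssim_s\|\eta\|_{H^{\sigma,s_0+\frac32}}$; here one must be slightly careful that all the derivatives that appear fall within the allowed range, which is precisely what the hypothesis $s-\frac32\ge s_0>\frac{d+1}{2}$ guarantees.

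Having the four coefficient estimates, the tame bound \eqref{stimalineare} follows by applying \eqref{algebrassa} to each product, e.g.
\[
\|\alpha(\eta)\pa_y^2\Pi\phi\|_{\sigma,s-\frac32,a}\le C_s\big(\|\alpha(\eta)\|_{\sigma,s-\frac32,0}\|\pa_y^2\Pi\phi\|_{\sigma,s_0,a}+\|\alpha(\eta)\|_{\sigma,s_0,0}\|\pa_y^2\Pi\phi\|_{\sigma,s-\frac32,a}\big),
\]
and using the derivative bounds above: the first summand is $\lesssim\|\eta\|_{H^{\sigma,s}}\|\Pi\phi\|_{\sigma,s_0+2,a}$, the second $\lesssim\|\eta\|_{H^{\sigma,s_0+\frac32}}\|\Pi\phi\|_{\sigma,s+\frac12,a}$, which is the shape of \eqref{stimalineare}; the other three terms are identical in structure. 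Finally, for analyticity: the map $\eta\mapsto F(\eta)$ is, term by term, a composition of the bounded linear map $g\mapsto e^{y|D|}g$ (Lemma~\ref{lem:und.varphi}), continuous bilinear multiplication (Proposition~\ref{lem:algebray}), and the two Neumann series $\sum_j(-e^{y|D|}|D|\eta)^j$ which are series of polynomial — hence analytic — maps, uniformly convergent on $B^{\sigma,s}(R)\cap B^{\sigma,s_0+\frac32}(\e_0)$ for every $R$ once $\e_0$ is fixed small; by the Weierstrass theorem each such series is analytic on that set, and then on the whole open domain $H^{\sigma,s}\cap B^{\sigma,s_0+\frac32}(\e_0)$ by arbitrariness of $R$. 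Since $\phi\mapsto F(\eta)[\phi]$ is linear for fixed $\eta$, this gives analyticity of $\eta\mapsto F(\eta)\in\cL(\bC\oplus\cH^{\sigma,s+\frac12,a},\cH^{\sigma,s-\frac32,a})$, exactly as stated. I expect the main obstacle to be a careful bookkeeping of the regularity indices in $\delta(\eta)$ — making sure that each derivative of $\rho$ lands in a space on which Lemma~\ref{lem:und.varphi} and \eqref{algebrassa} can be applied with the low index still above $\frac{d+1}2$ — rather than any conceptual difficulty.
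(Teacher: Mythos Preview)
Your proposal is correct and follows essentially the same route as the paper: decompose $F(\eta)[\phi]$ into bilinear forms (coefficient)$\times$(derivative of $\Pi\phi$), bound the derivatives via Lemma~\ref{lem:prop}, estimate $\alpha,\beta,\gamma,\delta$ via Lemma~\ref{lem:und.varphi} and the Neumann series (with \eqref{potenze}), combine by the tame product estimate \eqref{algebrassa}, and obtain analyticity from Weierstrass on the Neumann series. The only cosmetic differences are that the paper places the coefficients in the $a$-weighted spaces rather than the $a=0$ spaces (either works in \eqref{algebrassa}), and you should also list $\Delta$ among the derivative operators in your first paragraph to cover the $\beta(\eta)\Delta\phi$ term.
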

\begin{proof}
We write $F(\eta)[\phi]$ in \eqref{def:f} as 
$$
F(\eta)[\phi] =
 \cF_1[\alpha(\eta),  \phi] 
 +
  \cF_2[\beta(\eta),\phi]
  +
   \sum_{j=1}^d \cF_{3j}[\gamma_j(\eta), \phi] 
   +
    \cF_4[\delta(\eta),\phi] 
$$
with   bilinear maps 
\begin{align*}
\cF_1[g, \phi]:= g \pa_y^2 \phi \, , \quad 
\cF_2[g, \phi]:= g \Delta \phi \, , \quad 
\cF_{3j}[g, \phi]:=  g \pa_{x_j} \pa_y \phi \, , \quad 
\cF_4[g,\phi]:= g \pa_y \phi  \, .
\end{align*}
  In view of \eqref{algebrassa}, Lemma \ref{lem:prop} and \eqref{coplusH}, each of these maps is   bounded $\cH^{\sigma, s-\frac32, a} \times (\bC \oplus \cH^{\sigma, s+\frac12,a}) \to \cH^{\sigma, s-\frac32,a}$ and any $ \cF \in \{\cF_1, \cF_2, \cF_{3j}, \cF_4 \} $ 
  fulfills the tame  estimates
\begin{equation}
\label{Fi}
\| \cF [g, \phi] \|_{\sigma, s-\frac32,a} \lesssim_s  \| g \|_{\sigma, s-\frac32,a}
\|\Pi \phi \|_{\sigma, s_0+2,a} + \| g \|_{\sigma, s_0,a}
\|\Pi \phi \|_{\sigma, s+\frac12,a}   \, . 
\end{equation}
We claim that  the maps
\begin{equation}\label{mappeanaliti}
\begin{aligned}
& H^{\sigma, s}  \cap B^{\sigma,s_0+\frac12}(\e_0) \to \cH^{\sigma, s-\frac12, a} \, , 
\quad \eta \mapsto \alpha(\eta) \, , \beta(\eta) \, , \gamma_j(\eta) \, , \quad j = 1, \ldots, d \, , \\
& H^{\sigma, s} \cap B^{\sigma,s_0+\frac32}(\e_0)  \to \cH^{\sigma, s-\frac32, a} \, , \quad \eta \mapsto \delta(\eta) \, , 
\end{aligned}
\end{equation}
are analytic and, for any $s \geq s_0+\frac32 $, $ j=1,\dots, d $,  
\begin{equation}\label{midestimate}
\|\alpha(\eta) \|_{\sigma,s-\frac12,a},\, \|\beta(\eta) \|_{\sigma,s-\frac12,a},\, \|\gamma_j(\eta) \|_{\sigma,s-\frac12,a},\, \|\delta(\eta) \|_{\sigma,s-\frac32,a} \leq C(s) \|\eta\|_{H^{\sigma,s}} \, .
\end{equation} 
It is clear that these properties, together with 
\eqref{Fi},
 imply the Lemma. 

Let us consider first $\alpha(\eta)$, defined in \eqref{f}, which we rewrite as
$$
\alpha(\eta) = \Big(1- \frac{1}{\pa_y \rho(\eta)}\Big)+ \Big(1- \frac{1}{\pa_y \rho(\eta)}\Big)|\nabla \rho(\eta)|^2 -  |\nabla \rho(\eta)|^2 \, . 
$$
We first prove that   $\eta \mapsto 1- \frac{1}{\pa_y \rho(\eta)}$ is analytic
as a  map 
$H^{\sigma, s} \cap B^{\sigma,s_0}(\e_0)  \to \cH^{\sigma, s-\frac12, a}$. 
We first note that  Lemma \ref{lem:und.varphi} implies 
\begin{equation}\label{boundpay}
\|  \pa_y \rho(\eta)  - 1\|_{\sigma, s-\frac12 , a} 
=  \| e^{y|D|} |D| \eta \|_{\sigma, s-\frac12, a} \lesssim_s \| \eta \|_{H^{\sigma,s}} \, .  
\end{equation}
Then by \eqref{potenze} and \eqref{boundpay} the series
\begin{equation}
\label{neu.rho}
1- \frac{1}{\pa_y \rho(\eta)}  = 
-  \sum_{j \geq 1}   (  1- \pa_y \rho(\eta) )^j 
\end{equation}
is bounded by
\begin{align}
\Big\|  \frac{1}{\pa_y \rho(\eta)}-1 \Big\|_{\sigma, s-\frac12, a}
&\leq \|\pa_y \rho(\eta)-1  \|_{\sigma,s-\frac12,a} \sum_{j \geq 1}  \big(2C_s \|\pa_y \rho(\eta)-1\|_{\sigma,s_0,a}\big)^{j-1} \leq C(s) \|\eta  \|_{H^{\sigma,s} }  \label{fracest}
\end{align}
provided 
$  \| \eta \|_{H^{\sigma,s_0+ \frac12}} < \e_0(s)$ is small enough.
The  series \eqref{neu.rho} of analytic functions in uniformly 
convergent in $\cH^{\sigma, s-\frac12,a}$ on the domain $\eta \in 
B^{\sigma,s}(R)  \cap B^{\sigma,s_0}(\e_0)$, $ \forall R > 0 $, 
thus it defines 
an analytic  map
on $ H^{\sigma, s}  \cap B^{\sigma,s_0}(\e_0) $.
Moreover the linear map  $ \eta \mapsto \nabla \rho(\eta) =
e^{y|D|}  \nabla \eta  $ is, by Lemma \ref{lem:und.varphi},  
bounded between $ H^{\sigma, s} \to \cH^{\sigma, s-\frac12,a} $.
Therefore  $\alpha(\eta)$ is the product of analytic functions  $H^{\sigma, s} \cap B^{\sigma,s_0+\frac12}(\e_0) \to \cH^{\sigma, s-\frac12,a}$, and using the tame estimate 
\eqref{algebrassa} we get  \eqref{midestimate}.

The analyticity  and the estimates of the functions
$\eta \mapsto \beta(\eta), \gamma_j (\eta)$, $ j = 1, \ldots, d$
stated in \eqref{mappeanaliti} follow similarly.
Finally consider  $\delta(\eta)$ in \eqref{f}.
The  biggest loss of derivatives follows from  the linear  maps
$\eta \mapsto \Delta \rho(\eta) $, $ \pa_y^2\rho(\eta) $, $ \nabla \pa_y\rho(\eta) $
which, by Lemmata \ref{lem:prop} and \ref{lem:und.varphi}, 
are bounded between $ H^{\sigma, s} \to \cH^{\sigma, s-\frac32,a} $.
Moreover $\delta(\eta)$ satisfies the estimate
$\| \delta(\eta) \|_{H^{\sigma, s-\frac32,a}} \leq C(s, \| \eta\|_{H^{\sigma, s_0+\frac32}} ) \, \| \eta\|_{H^{\sigma, s}} $
for any $ s -\frac32 \geq s_0$.
\end{proof}

\begin{proof}[Proof of Lemma \ref{lem:lasol}] For any $s\geq s_0+\frac32$ such that $s+\frac12\in\bN$,
by Lemmata \ref{lem:ell5} and \ref{lem:non}, the map 
$$
\eta \mapsto P(\eta):= L\circ  F(\eta) \, , \quad 
H^{\sigma,s}\cap B^{\sigma, s_0+\frac12}(\e_0) \to \cL(\bC \oplus \cH^{\sigma, s+\frac12, a}) \, , 
$$ 
is analytic and, for positive constants $ C(s) \geq C'(s_0) > 0 $,  
 in view of \eqref{coplusH}, 
\begin{equation}\label{indbase}
\begin{aligned}
&
\| P(\eta)[\phi] \|_{\sigma, s+\frac12, a} \leq C(s) \, \big(  \| \eta \|_{H^{\sigma, s_0+\frac32}}\|\Pi\phi \|_{\sigma, s+\frac12, a}+\| \eta \|_{H^{\sigma, s}}\|\Pi\phi \|_{\sigma, s_0+2, a} \big) \\
& \| P(\eta)[\phi] \|_{\sigma, s_0+2, a} \leq C'(s_0) \,  
\| \eta \|_{H^{\sigma, s_0+\frac32}}  \|\Pi\phi \|_{\sigma, s_0+2, a}  \, . 
\end{aligned}
\end{equation}
We claim that, for any $\eta \in H^{\sigma,s}\cap B^{\sigma, s_0+\frac32}(\e_0) $  
and $  \e_0 (s) > 0 $ small enough, the operator $\uno-P(\eta) $ is invertible in $\cL(\bC \oplus \cH^{\sigma, s+\frac12, a})$ and the inverse map
 \begin{equation}\label{Neumannseries}
 \eta \mapsto (\uno - P(\eta))^{-1} = \sum_{j=0}^\infty P(\eta)^j[\phi] \, , \quad H^{\sigma,s}\cap B^{\sigma, s_0+\frac32}(\e_0) \to \cL(\bC \oplus \cH^{\sigma, s+\frac12, a})  \, , 
 \end{equation}
 is analytic. 
 As each $\eta \mapsto P(\eta)^j$ is analytic $ H^{\sigma,s}\cap B^{\sigma, s_0+\frac12}(\e_0) \to \cL(\bC \oplus \cH^{\sigma, s+\frac12, a})$, 
the claim follows by proving that the series \eqref{Neumannseries} converges uniformly  in 
$\cL(\bC \oplus \cH^{\sigma, s+\frac12, a})$ for 
$\eta \in B^{\sigma,s}(R)\cap B^{\sigma, s_0+\frac32}(\e_0) $ for any $R >0$.
By \eqref{indbase}  we have, for any $ j \in \bN $, 
\begin{equation}\label{indbassa}
\| P(\eta)^j[\phi] \|_{\sigma, s_0+2, a} \leq \big(C'(s_0) \| \eta \|_{H^{\sigma, s_0+\frac32}}\big)^j \|\Pi\phi \|_{\sigma, s_0+2, a} \, , 
\end{equation}
and, by induction, we prove that 
\begin{equation}\label{indalta}
\| P(\eta)^j[\phi] \|_{\sigma, s+\frac12, a} \leq C(s)^j \|\eta \|_{H^{\sigma, s_0+\frac32}}^{j-1}  \big(  \| \eta \|_{H^{\sigma, s_0+\frac32}}\|\Pi\phi \|_{\sigma, s+\frac12, a}\!+\!j\| \eta \|_{H^{\sigma, s}}\|\Pi\phi \|_{\sigma, s_0+2, a} \big) \, . 
\end{equation}
Indeed, for $j=1$ this is \eqref{indbase}. 
Then assuming that  \eqref{indalta} holds for $j$, we get 
\begin{equation*}
\| P(\eta)^{j+1}[\phi] \|_{\sigma, s+\frac12, a} \stackrel{\eqref{indbase}}{\leq} C(s) \, \big(\underbrace{\|\eta \|_{H^{\sigma, s_0+\frac32}}\|P(\eta)^j[\phi] \|_{\sigma, s+\frac12, a}}_{=:A}+\underbrace{\| \eta \|_{H^{\sigma, s}}\| P(\eta)^j[\phi] \|_{\sigma, s_0+2, a}}_{=:B} \big).
\end{equation*}
By the inductive hypothesis the first term is bounded by
\begin{equation*}
A \leq C(s)^{j} \|\eta \|_{H^{\sigma, s_0+\frac32}}^{j}  \, \big(  \| \eta \|_{H^{\sigma, s_0+\frac32}}\|\Pi\phi \|_{\sigma, s+\frac12, a}+j\| \eta \|_{H^{\sigma, s}}\|\Pi\phi \|_{\sigma, s_0+2, a} \big) \, , 
\end{equation*}
whereas, by \eqref{indbassa}, 
\begin{equation*}
B\leq (C'(s_0) \| \eta \|_{H^{\sigma, s_0+\frac32}})^{j} \| \eta \|_{H^{\sigma, s}} \|\Pi\phi \|_{\sigma, s_0+2, a} \, ,
\end{equation*}
and we deduce, as $ C'(s_0) \leq C(s) $,  that 
\begin{equation*}
\| P(\eta)^{j+1}[\phi] \|_{\sigma, s+\frac12, a} \leq C(s)^{j+1} \|\eta \|_{H^{\sigma, s_0+\frac32}}^{j}  \, \big(  \| \eta \|_{H^{\sigma, s_0+\frac32}}\|\Pi\phi \|_{\sigma, s+\frac12, a}+(j+1)\| \eta \|_{H^{\sigma, s}}\|\Pi\phi \|_{\sigma, s_0+2, a} \big) \, ,
\end{equation*}
which proves \eqref{indalta} at the step $ j+1 $. 

By \eqref{indalta}, 
the series in \eqref{Neumannseries} is bounded by 
\begin{align}
\|(\uno-P(\eta))^{-1}[\phi]\|_{\sigma,s+\frac12,a} &\leq \sum_{j \geq 0} \| P(\eta)^j[\phi] \|_{\sigma,s+\frac12,a}  \notag \\ \notag
&\leq \|\Pi\phi \|_{\sigma, s+\frac12, a} + 
\sum_{j \geq 1} C(s)^j \|\eta \|_{H^{\sigma, s_0+\frac32}}^{j-1}\   \| \eta \|_{H^{\sigma, s_0+\frac32}}\|\Pi\phi \|_{\sigma, s+\frac12, a}  \\
&\qquad \qquad \qquad \, +\sum_{j \geq 1} C(s)^j \|\eta \|_{H^{\sigma, s_0+\frac32}}^{j-1}\,j\,\| \eta \|_{H^{\sigma, s}}\|\Pi\phi \|_{\sigma, s_0+2, a} \notag \\
&\leq 2\|\Pi\phi\|_{\sigma,s+\frac12,a}+
C \|\eta\|_{H^{\sigma,s}} \|\Pi\phi\|_{\sigma,s_0+2,a} 
\label{finalest}
\end{align}
provided 
$ \| \eta \|_{H^{\sigma, s_0+\frac32}} < \e_0(s)$ is sufficiently small. 
In particular this shows the claim on the uniform convergence of the series on $B^{\sigma,s}(R)\cap B^{\sigma, s_0+\frac32}(\e_0) $ for any $R >0$.

The analytic map 	
\begin{align*}
U: H^{\sigma,s}\cap B^{\sigma, s_0+\frac32}(\e_0) \longrightarrow \cL \big( \bC \oplus \cH^{\sigma, s+\frac12,a}\big) \, , \quad 
 U(\eta)[ \phi]:= (\uno - L \circ F(\eta) )^{-1}\big[L\circ F(\eta)[\phi]\big] \, , 
\end{align*}
defines the unique solution $u = U(\eta)[\phi]$ 
of  \eqref{ell10} and, consequently, of system \eqref{ell4}.
By \eqref{finalest} and \eqref{indbase} we deduce
\eqref{tameu}. This  proves Lemma \ref{lem:lasol}.
\end{proof}

\begin{proof}[Proof of Proposition \ref{prop:varphi}] 
It follows with $ \varphi = \Psi(\eta)[\psi]= e^{y|D|}\psi + U (\eta)[e^{y|D|}\psi] $, see
\eqref{eq:varphi}, \eqref{ell3} and Lemma \ref{lem:lasol}. 
\end{proof}

\section{Analyticity of the Stokes wave}\label{sec:Sto}

In this section we prove Theorem \ref{LeviCivita}. 
With the aid of the analyticity result of Theorem \ref{thm:DN}, the bifurcation 
proof is classical.  
We report it for completeness. 
It is based on the application of the analytic Crandall-Rabinowitz  Theorem \ref{thm:CrandallRabinowitz} below.  For the proof
we refer e.g to  
\cite{BuT}, and  Theorem 4.1 in Chap. 5 of \cite{AP} for its smooth version.
\begin{teo}[Crandall-Rabinowitz bifurcation Theorem]\label{thm:CrandallRabinowitz}
Let $X,Y$ be Banach spaces and $  U \subset X$ be an open neighbourhood of 0. Let  $F:U\times \bR \to Y$, $ F(u, c ) $, be an analytic map satisfying 
 $F(0,c)=0$ for any $c \in \bR$. Let $c^*$ be such  that 
 $ L := \de_u F(0,c^*) \in {\mathcal L} (X,Y)$ is not invertible and 
\begin{enumerate}
\item 
$ \text{Ker} ( L )   = \textup{span} \langle u^* \rangle $, $ u^* \in X $, is  $ 1 $-dimensional; 
\item the range  $ R := \text{Rng} (L ) $  is closed and $\textup{codim } R = 1 $; 
\item {(\it {transversality})} 
$ \; \pa_c \de_u F(0,c^*)[u^*] \notin R   \, . $
\end{enumerate}
Then there exist $\e_*  >0$ and an analytic function 
$$
(-\e_*,\e_*) \to   U\times \bR , \ \ \e \mapsto (u_\e, c_\e) \, , \   \quad \quad 
 u_\e = \e u^* + O(\e^2) \, , \ \  c_\e = c^* + O(\e) \, ,
  $$ 
such that 
  $F(u_\e, c_\e) = 0$ for any $| \e | < \e_* $.
\end{teo}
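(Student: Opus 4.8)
The plan is to prove Theorem~\ref{thm:CrandallRabinowitz} by the classical Lyapunov--Schmidt reduction, performed in the analytic category so that the bifurcating branch inherits analyticity. Since $\mathrm{Ker}(L) = \textup{span}\langle u^*\rangle$ is finite dimensional it admits a closed complement $X_1$, and since $R = \mathrm{Rng}(L)$ is closed of codimension one it admits a one-dimensional complement $Y_1$; write $X = \textup{span}\langle u^*\rangle \oplus X_1$ and $Y = R \oplus Y_1$, let $Q \in \cL(Y)$ be the projection onto $R$ with kernel $Y_1$, and fix $\ell$ in the dual of $Y$ with $\mathrm{Ker}(\ell) = R$. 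Writing $u = t u^* + w$ with $t \in \bR$, $w \in X_1$, the equation $F(u,c) = 0$ splits into the auxiliary equation $Q F(t u^* + w, c) = 0$ and the scalar bifurcation equation $\ell\big(F(t u^* + w, c)\big) = 0$. The map $G(t,w,c) := Q F(t u^* + w, c)$ is analytic, vanishes on $\{t = 0,\, w = 0\}$, and $\de_w G(0,0,c^*) = Q L|_{X_1} = L|_{X_1}\colon X_1 \to R$ is an isomorphism (injective since $X_1 \cap \mathrm{Ker}(L) = \{0\}$, surjective since $L(X_1) = L(X) = R$). By the analytic implicit function theorem there is an analytic $w = w(t,c)$ near $(0,c^*)$ with $w(0,c^*) = 0$ solving $G \equiv 0$; uniqueness together with $F(0,c) = 0$ forces $w(0,c) \equiv 0$, and differentiating $G(t, w(t,c^*), c^*) \equiv 0$ at $t = 0$ gives $Q L\, \de_t w(0,c^*) = 0$, hence $\de_t w(0,c^*) = 0$.

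It then remains to solve the scalar bifurcation equation $\Phi(t,c) := \ell\big(F(t u^* + w(t,c), c)\big) = 0$. Since $\Phi(0,c) \equiv 0$ and $\Phi$ is analytic, dividing its power series in $t$ yields $\Phi(t,c) = t\,\Psi(t,c)$ with $\Psi$ analytic near $(0,c^*)$. Using $w(0,\cdot) \equiv 0$, $\de_t w(0,c^*) = 0$, $L u^* = 0$ and $\mathrm{Rng}(L) = \mathrm{Ker}(\ell)$, the chain rule gives
\begin{equation*}
\Psi(0,c^*) = \de_t \Phi(0,c^*) = \ell\big( L\,(u^* + \de_t w(0,c^*))\big) = 0 \,, \qquad
\de_c \Psi(0,c^*) = \de_c \de_t \Phi(0,c^*) = \ell\big( \pa_c \de_u F(0,c^*)[u^*]\big) \,,
\end{equation*}
and the quantity on the right of the second identity is nonzero exactly by the transversality hypothesis $\pa_c \de_u F(0,c^*)[u^*] \notin R$.

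Since $\Psi(0,c^*) = 0$ and $\de_c \Psi(0,c^*) \neq 0$, the scalar implicit function theorem, once more in the analytic category, provides $\e_* > 0$ and an analytic function $\e \mapsto c_\e$, $|\e| < \e_*$, with $c_0 = c^*$ and $\Psi(\e, c_\e) = 0$, hence $\Phi(\e, c_\e) = 0$. Setting $u_\e := \e u^* + w(\e, c_\e)$, the pair $(u_\e, c_\e)$ solves $F(u_\e, c_\e) = 0$; clearly $c_\e = c^* + O(\e)$, while $w(0,\cdot) \equiv 0$ and $\de_t w(0,c^*) = 0$ give, by Taylor expansion, $w(\e, c_\e) = O(\e^2)$, so $u_\e = \e u^* + O(\e^2)$. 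Analyticity of $\e \mapsto (u_\e, c_\e)$ is inherited by composing the analytic maps constructed above.

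The delicate point is the second step: passing from the (trivially satisfied) identity $\Phi(0,c) \equiv 0$ to the factorization $\Phi = t\Psi$ with $\Psi$ of the \emph{same} regularity --- immediate here through power series, but genuinely relying on the analyticity of $F$ --- and then identifying $\de_c \Psi(0,c^*)$ with the transversality quantity $\ell(\pa_c \de_u F(0,c^*)[u^*])$ by carefully tracking the vanishing relations $w(0,\cdot)\equiv 0$ and $\de_t w(0,c^*) = 0$. One must also have at hand a holomorphic implicit function theorem in Banach spaces, applied both to $G$ and to $\Psi$: it is precisely analyticity of $F$ that propagates to $w(t,c)$, to $\Psi$, to $c_\e$, and finally to the whole branch $\e \mapsto (u_\e, c_\e)$.
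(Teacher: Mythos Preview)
Your proof is correct and follows the standard Lyapunov--Schmidt reduction that underlies the Crandall--Rabinowitz theorem. Note, however, that the paper does \emph{not} give its own proof of this statement: it simply quotes the theorem and refers the reader to \cite{BuT} and to Theorem~4.1 in Chapter~5 of \cite{AP}. So there is nothing to compare against within the paper; what you have written is precisely the argument one finds in those references, carried out in the analytic category.

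One small remark on your closing commentary: the factorization $\Phi(t,c)=t\,\Psi(t,c)$ with $\Psi$ of the same regularity is not special to the analytic setting --- Hadamard's lemma gives it already for $C^k$ maps (with one derivative lost), and even with no loss if one uses the integral form $\Psi(t,c)=\int_0^1 \partial_t\Phi(st,c)\,\de s$. The place where analyticity is genuinely essential is the analytic implicit function theorem, applied first to $G$ to produce an analytic $w(t,c)$ and then to $\Psi$ to produce an analytic $c_\e$; this is what ultimately yields analyticity of the branch $\e\mapsto(u_\e,c_\e)$. Your argument handles this correctly, but the emphasis in the final paragraph could be sharpened accordingly.
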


Theorem \ref{LeviCivita}  is proved by applying Theorem \ref{thm:CrandallRabinowitz} to 
the nonlinear operator 
\begin{equation}\label{def:F}
\begin{aligned}
& F:\; \big( H^{\sigma,s}_{\mathtt{ev}_0} \cap B^{\sigma,s_0}(\e_0) \big)
\times  H^{\sigma,s}_{\mathtt{odd}}   
\times \bR \longrightarrow   H^{\sigma,s-1}_{\mathtt{odd}} 
\times  H^{\sigma,s-1}_{\mathtt{ev}_0}\, , \ \sigma \geq 0 \, , \  s > 5/ 2  \, , \\
& \qquad  F(\eta,\psi,c):= \Big(c\eta_x+G(\eta)\psi \ , \ 
c\psi_x - g \eta - \dfrac{\psi_x^2}{2} + 
\dfrac{1}{2(1+\eta_x^2)} \big( G(\eta) \psi + \eta_x \psi_x \big)^2   \Big) 
\end{aligned}
\end{equation}
where 
$ H^{\sigma,s}_{\mathtt{ev}_0} $, respectively $ H^{\sigma,s}_{\mathtt{odd}} $, denote the  space of even, respectively odd, and average-free real valued functions in $ H^{\sigma,s}$ defined in \eqref{def:Hsigmas},
and $ \e_0 := \e_0(\sigma,s,s_0) > 0 $ is provided by Theorem \ref{thm:DN}. Note that a 
real  function $ (\eta, \psi) \in H^{\sigma,s}_{\mathtt{ev}_0} \times H^{\sigma,s}_{\mathtt{odd}}  $ admits a Fourier series expansion 
\begin{equation}\label{svifou}
\vet{\eta(x)}{\psi(x)} = \sum_{k \geq 1} \vet{\eta_k \cos(kx)}{\psi_k \sin(kx)} \quad \text{with norm} \quad 
\| (\eta,\psi) \|_{H^{\sigma,s}}^2 \simeq \sum_{k \geq 1} 
 e^{2 \sigma |k|} \langle k \rangle^{2s} (\eta_k^2 + \psi_k^2)  \, . 
\end{equation}
The fact that the nonlinear operator $ F  $ in \eqref{def:F} maps 
a pair of functions $ (\eta, \psi )$ which are odd/even in $ x $
 into a pair of functions which are even/odd in $ x $ is verified thanks to the reversibility property \eqref{DN-rev}.   Moreover, the second component of $ F $
 has zero average thanks to the following  lemma. 
\begin{lem}\label{lem:averagefree}
 Let $G ( \eta) $ be the Dirichlet-Neumann operator defined in \eqref{DN1}. Then  
\begin{equation}\label{DNinf}
\int_\bT -  \frac12 \psi_x^2  +  \dfrac{1}{2(1+\eta_x^2)} \big( G(\eta) \psi + \eta_x \psi_x \big)^2
\, \de x  = 0 \, . 
\end{equation}
\end{lem}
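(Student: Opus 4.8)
The identity \eqref{DNinf} expresses the vanishing of the average of $\nabla_\eta K(\eta,\psi)$, the $L^2$-gradient of the kinetic energy $K(\eta,\psi)=\frac12(\psi,G(\eta)\psi)_{L^2}$ recalled in \eqref{kine}--\eqref{Kform}. The natural strategy is therefore to exploit the translation invariance of $K$ in the $\eta$-direction: if $\eta$ is replaced by $\eta+\mathrm{const}$ and the domain ${\cal D}_\eta$ is correspondingly shifted vertically, the harmonic extension $\Phi$ is simply translated, hence the Dirichlet integral $\int_{{\cal D}_\eta}|\nabla\Phi|^2$ is unchanged. First I would make this precise: for $t\in\bR$ let $\eta^{(t)}:=\eta+t$ and observe, from the formula $K(\eta,\psi)=\frac12\int_{{\cal D}_\eta}|\nabla\Phi|^2\,\de x\,\de y$ in \eqref{kine}, that $K(\eta^{(t)},\psi)=K(\eta,\psi)$ for all $t$, since the harmonic function solving \eqref{Phi} on ${\cal D}_{\eta^{(t)}}=\{y<\eta(x)+t\}$ with boundary datum $\psi$ is exactly $\Phi(x,y-t)$, and a vertical shift is volume-preserving.

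Differentiating $t\mapsto K(\eta+t,\psi)$ at $t=0$ then gives $0=\frac{\de}{\de t}\big\vert_{t=0}K(\eta+t,\psi)=\big(\nabla_\eta K(\eta,\psi),1\big)_{L^2}=\int_\bT\nabla_\eta K(\eta,\psi)\,\de x$, which is precisely \eqref{DNinf} once we substitute the explicit expression \eqref{Kform} for $\nabla_\eta K$. The differentiation under the integral sign is justified by the analyticity of $G(\eta)$ proved in Theorem \ref{thm:DN} (equivalently, by the smooth dependence of $\Phi$ on the domain via the flattening diffeomorphism of Section \ref{sec:DN}), so $t\mapsto K(\eta+t,\psi)$ is a smooth, indeed analytic, function of $t$ near $0$, and its derivative is the $L^2$-pairing of the gradient with the constant perturbation $1$.

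Alternatively, and perhaps more self-containedly, one can give a direct computation: writing $B:=(\pa_y\Phi)(x,\eta(x))$ and $V:=(\nabla\Phi)(x,\eta(x))$, so that $G(\eta)\psi=B-\nabla\eta\cdot V$ and $\psi_x=V+\eta_x B$ (the chain rule for $\psi(x)=\Phi(x,\eta(x))$, in dimension $d=1$), one checks that the integrand in \eqref{DNinf} equals $\frac12\big(B^2-V^2\big)$, and then that $\int_\bT(B^2-V^2)\,\de x=0$. This last vanishing follows by integrating $\operatorname{div}(\Phi\nabla_{x,y}\Phi)=|\nabla_{x,y}\Phi|^2$ over ${\cal D}_\eta$, or more directly by noting $B^2-V^2=\operatorname{Re}\big((\pa_y\Phi+\im\,\pa_x\Phi)^2\big)\big\vert_{y=\eta(x)}$ and using that $\pa_y\Phi+\im\,\pa_x\Phi$ is (anti)holomorphic together with the exponential decay $\nabla\Phi\to0$ as $y\to-\infty$ from Remark \ref{remdecay}.

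\textbf{Main obstacle.} The conceptual content is a one-line symmetry argument; the only real work is rigor, namely justifying the interchange of $\de/\de t$ with the domain integral defining $K$, or equivalently controlling the integrability of the quadratic boundary quantities $B^2$ and $V^2$ — and for that the function spaces $\cH^{\sigma,s,a}$ with their decay weight $e^{2a|y|}$ (together with the trace Lemma \ref{lem:trace}) are exactly what make $\int_{{\cal D}_\eta}|\nabla\Phi|^2$ finite and differentiable in $\eta$. Once Theorem \ref{thm:DN} is in hand, no further estimate is needed.
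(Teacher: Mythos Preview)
Your primary argument is correct and essentially identical to the paper's: it too uses the vertical translation invariance $K(\eta+m,\psi)=K(\eta,\psi)$ (recorded there via \eqref{DN-inf}), differentiates in $m$ to obtain $(\nabla_\eta K(\eta,\psi),1)_{L^2}=0$, and then substitutes \eqref{Kform}.

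Your alternative direct computation, however, contains a slip. With $B=(\pa_y\Phi)(x,\eta(x))$ and $V=(\pa_x\Phi)(x,\eta(x))$ one has $G(\eta)\psi+\eta_x\psi_x=B(1+\eta_x^2)$ and $\psi_x=V+\eta_xB$, so the integrand in \eqref{DNinf} equals
\[
\tfrac12 B^2(1+\eta_x^2)-\tfrac12(V+\eta_xB)^2=\tfrac12(B^2-V^2)-\eta_x VB,
\]
not $\tfrac12(B^2-V^2)$ alone; and in general $\int_\bT(B^2-V^2)\,\de x\neq 0$. The contour-integral idea still rescues the argument: setting $f:=\Phi_x-\im\Phi_y$ (holomorphic since $\Phi$ is harmonic) and using $\de z=(1+\im\eta_x)\,\de x$ along $y=\eta(x)$ together with the decay of $\nabla\Phi$ from Remark \ref{remdecay}, Cauchy's theorem gives $\oint f^2\,\de z=0$, whose real part is exactly $\int_\bT\big[(V^2-B^2)+2\eta_xVB\big]\,\de x=0$, i.e.\ the vanishing of the correct integrand.
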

\begin{proof} 
By \eqref{DN-inf}, the kinetic energy
$ K(\eta, \psi) = \frac12 (\psi, G(\eta) \psi)_{L^2}  $ in \eqref{kine} satisfies
$ K(\eta+m,\psi) = K(\eta,\psi) $ for any $ m \in \bR $.  
Thus 
$$
0 = \frac{\de}{\de m} K(\eta+m,\psi)
	= \de_{\eta} K(\eta,\psi)[1] = (\nabla_\eta K(\eta, \psi), 1)_{L^2} 
	= \int_{\bT} \nabla_\eta K(\eta, \psi) \, \de x \, .   
$$
In view of  \eqref{Kform}, the identity  \eqref{DNinf} is proved. 
\end{proof}

\smallskip
We now start verifying the assumptions of the Crandall-Rabinowitz Theorem \ref{thm:CrandallRabinowitz}.
First, by Theorem \ref{thm:DN}, the nonlinear operator 
 $ F $ defined in \eqref{def:F} is {\it analytic}. Moreover, by inspection, 
$$
F(0,0,c) = 0 \, , \quad \forall c \in \bR \, . 
$$
The possible bifurcation values of non-trivial solutions of $ F(\eta, \psi, c ) = 0 $ 
are those speeds $ c $ such that 
the  linearized operator 
\begin{equation}\label{linF}
\begin{aligned}
\de_{(\eta,\psi)} F(0,0,c):\; H^{\sigma,s}_{\mathtt{ev}_0} \times H^{\sigma,s}_{\mathtt{odd}} \to& H^{\sigma,s-1}_{\mathtt{odd}} \times  H^{\sigma,s-1}_{\mathtt{ev}_0} \, , \quad
 \vet{\hat\eta}{\hat\psi} \mapsto& \begin{bmatrix} c\pa_x & |D| \\ -g & c \pa_x \end{bmatrix}\vet{\hat\eta}{\hat\psi} \, , 
\end{aligned}
\end{equation}
has a nontrivial kernel. In the next lemma we characterize such values.
\begin{lem}\label{lem:ker}
{\bf (Bifurcation speeds)}
The kernel of $\de_{(\eta,\psi)} F(0,0,c)$ in \eqref{linF} is nontrivial if and only if 
\begin{equation}\label{ilc} 
c= \pm \sqrt{\frac{g}{k}} \qquad \text{for  some} \  k \in \bN \, . 
\end{equation}
For any  $ k \in \bN $, the Kernel of $ L := \de_{(\eta,\psi)} F(0,0, c^*_k) $,
where we set $ c^*_k := \sqrt{\frac{g}{k}}$, 
is one dimensional and 
\begin{equation}\label{kernelL}
\text{Ker}(L)= \langle u^* \rangle  \quad \text{with} \quad 
u^* := \vet{\sqrt{k}\cos(kx)}{\sqrt{g}\sin(kx)} \, . 
\end{equation}
\end{lem}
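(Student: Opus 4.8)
The plan is to diagonalize the linear operator $\de_{(\eta,\psi)}F(0,0,c)$ of \eqref{linF} by Fourier series. Writing an element of $H^{\sigma,s}_{\mathtt{ev}_0}\times H^{\sigma,s}_{\mathtt{odd}}$ as $\hat\eta(x)=\sum_{k\geq 1}\hat\eta_k\cos(kx)$, $\hat\psi(x)=\sum_{k\geq1}\hat\psi_k\sin(kx)$ as in \eqref{svifou} --- note that the average-free condition removes the $k=0$ mode, so $|D|$ has trivial kernel on these spaces --- a direct computation using $\pa_x\cos(kx)=-k\sin(kx)$, $\pa_x\sin(kx)=k\cos(kx)$, $|D|\sin(kx)=k\sin(kx)$ shows that on the $k$-th harmonic the operator acts as
$$
M_k(c):=\begin{bmatrix} -ck & k \\ -g & ck \end{bmatrix} \, ,
$$
sending $(\hat\eta_k,\hat\psi_k)$ to the coefficient of $\sin(kx)$ in the first component and of $\cos(kx)$ in the second; in particular the output is odd in the first slot and even in the second, consistently with \eqref{linF}.

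Next I would compute $\det M_k(c)=-c^2k^2+gk=k(g-c^2k)$, which vanishes if and only if $c^2=g/k$, i.e. $c=\pm\sqrt{g/k}$. Hence the kernel of $\de_{(\eta,\psi)}F(0,0,c)$ is nontrivial if and only if $M_k(c)$ is singular for some $k\in\bN$, which gives exactly \eqref{ilc}.

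Now fix $k\in\bN$ and set $c=c^*_k=\sqrt{g/k}$, $L:=\de_{(\eta,\psi)}F(0,0,c^*_k)$. For every $k'\neq k$ one has $g-(c^*_k)^2k'=g\bigl(1-\tfrac{k'}{k}\bigr)\neq 0$, so $M_{k'}(c^*_k)$ is invertible and any kernel element must satisfy $\hat\eta_{k'}=\hat\psi_{k'}=0$; thus the kernel is supported on the single frequency $k$ (the exponential weights in \eqref{svifou} play no role, the kernel being finite-dimensional and concentrated at one mode). On the $k$-th mode, $M_k(c^*_k)=\begin{bmatrix} -\sqrt{gk} & k \\ -g & \sqrt{gk}\end{bmatrix}$ has rank $1$ (its first row is nonzero and the second is $\sqrt{g/k}$ times it), with kernel spanned by $(\hat\eta_k,\hat\psi_k)$ satisfying $\hat\psi_k=c^*_k\,\hat\eta_k$; normalizing $\hat\eta_k=\sqrt{k}$ yields $\hat\psi_k=c^*_k\sqrt{k}=\sqrt{g}$, i.e. precisely $u^*$ in \eqref{kernelL}. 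Therefore $\mathrm{Ker}(L)=\langle u^*\rangle$ is one-dimensional.

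Since the whole argument reduces to an elementary mode-by-mode linear algebra computation, there is no genuine obstacle; the only points requiring (minor) care are verifying that $|D|$ acts as multiplication by $|k|=k$ on each harmonic and hence that the operator indeed maps even/odd average-free functions to odd/even ones, and observing that the sign choice $c^*_k=\sqrt{g/k}$ versus $-\sqrt{g/k}$ merely changes $u^*$ by replacing $\psi_k$ with $-\psi_k$, so it suffices to treat the positive branch as in the statement.
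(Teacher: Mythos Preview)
Your proof is correct and follows essentially the same approach as the paper: both diagonalize the linearized operator via the Fourier expansion \eqref{svifou}, reduce to the $2\times 2$ matrices $\begin{bmatrix}-ck & k\\ -g & ck\end{bmatrix}$ on each mode, compute the determinant to obtain \eqref{ilc}, and then check that at $c=c^*_k$ only the $k$-th block is singular with a one-dimensional kernel giving $u^*$.
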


\begin{proof}
By the Fourier expansion 
  \eqref{svifou}, it results that the kernel of $\de_{(\eta,\psi)} F(0,0,c) $ is nontrivial if
and only if at least one of the matrices
$ \begin{bmatrix} - ck   & k \\ -g &  c k \end{bmatrix}$, $k \in \bN$, has zero determinant. This is  verified provided  $c^2 k = g $ for some  $k \in \bN $, i.e. \eqref{ilc} holds.
In addition, a vector 
$ \vet{\eta(x)}{\psi(x)} = \sum_{j \geq 1} \vet{\eta_j \cos(jx)}{\psi_j \sin(jx)}$  
belongs to the Kernel of 
$  \de_{(\eta,\psi)} F(0,0,c_k^* ) $ if and only if  
\begin{equation}
\label{CR1.sys}
\begin{bmatrix}
- c^*_k j & j \\
-g & c^*_k j 
\end{bmatrix}\vet{\eta_j}{\psi_j} =  0 \, , \quad \forall j \geq 1 \, . 
\end{equation}
If  $j \neq k$ then 
\begin{equation}
\label{det.ck}
\det \begin{bmatrix}
- c^*_k j & j \\
-g & c^*_k j 
\end{bmatrix}  = - (c^*_k)^2 j^2 + g j  
= j^2 \big( (c^*_j)^2 - (c^*_k)^2\big) \neq 0 \, ,  
\end{equation}
since  the map $k \mapsto (c^*_k)^2 = g / k $ is  injective on $\bN$. 
Hence $\eta_j = \psi_j = 0$ for any $j \neq k$. On the other hand, if  $j = k$
then \eqref{CR1.sys} is solved provided  $\sqrt{g} \eta_k =\sqrt{k}\psi_k$, proving 
\eqref{kernelL}. 
\end{proof}

We apply Theorem  \ref{thm:CrandallRabinowitz} with 
$ c^*_k := \sqrt{\frac{g}{k}}$. By Lemma \ref{lem:ker}
 assumption 1 holds. 
The next lemma verifies the assumptions 2)-3). 
\begin{lem}\label{lem:CrandallRabinowitzHP}
The range  $ R:=\text{Rng} L$,  $ L = \de_{(\eta,\psi)} F(0,0, c^*_k) $,  is 
\begin{equation}\label{formaR}
R= \left\lbrace
 \vet{f}{g} \in H^{\sigma,s-1}_{\mathtt{odd}}(\bT)\times  H^{\sigma,s-1}_{\mathtt{ev}_0}(\bT) 
  \, \colon 
  \ \ 
  \vet{f(x)}{g(x)} =  \vet{f_k \sin(kx)}{ c_k^* f_k \cos( kx)}  + 
  \sum_{j\geq 1, j \neq k} \vet{f_j \sin(jx)}{g_j \cos(jx)}   
\right\rbrace \, . 
\end{equation}
 In particular $R$ is closed and $\mathrm{codim}\, R =1 $.
 
The vector $(\pa_c \de_{(\eta,\psi)} F)(0,0, c^*_k)\vet{\sqrt{k}\cos(kx)}{\sqrt{g}\sin(kx)}$ does not belong to $R$.
\end{lem}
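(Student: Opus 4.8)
The plan is to diagonalize the operator $L=\de_{(\eta,\psi)}F(0,0,c^*_k)$ in Fourier series, exactly as was done for its kernel in Lemma \ref{lem:ker}, and to read off the range mode by mode. Writing a generic element of the domain as in \eqref{svifou}, $\hat\eta=\sum_{j\geq 1}\hat\eta_j\cos(jx)$, $\hat\psi=\sum_{j\geq 1}\hat\psi_j\sin(jx)$, and using that $\pa_x$ sends $\cos(jx)\mapsto -j\sin(jx)$ and $\sin(jx)\mapsto j\cos(jx)$ while $|D|\sin(jx)=j\sin(jx)$, formula \eqref{linF} shows that $L$ acts on the $j$-th mode through the matrix
\begin{equation*}
M_j:=\begin{bmatrix}-c^*_k j & j\\ -g & c^*_k j\end{bmatrix} ,
\end{equation*}
sending the pair $(\hat\eta_j,\hat\psi_j)$ of Fourier coefficients of $(\cos(jx),\sin(jx))$ to the pair of coefficients of $(\sin(jx),\cos(jx))$ in the image. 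For $j\neq k$ one has $\det M_j=(jg/k)(k-j)\neq 0$ by \eqref{det.ck}, hence $M_j$ is invertible; for $j=k$, since $(c^*_k)^2 k=g$ the second row of $M_k$ is $c^*_k$ times the first, so $\mathrm{Rng}(M_k)$ is the line spanned by $(1,c^*_k)$. This already forces every element of $\mathrm{Rng}(L)$ to have the form displayed in \eqref{formaR}.

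For the reverse inclusion I would, given $(f,g)$ in the set \eqref{formaR}, set $(\hat\eta_j,\hat\psi_j):=M_j^{-1}(f_j,g_j)$ for $j\neq k$ and choose, for instance, $\hat\psi_k:=0$ and $\hat\eta_k:=-f_k/(c^*_k k)$; the constraint $g_k=c^*_k f_k$ built into \eqref{formaR} makes the $k$-th mode of $L(\hat\eta,\hat\psi)$ coincide with that of $(f,g)$. The one quantitative point is the regularity of the resulting series: since $\det M_j\sim -j^2 g/k$ and the entries of $M_j$ are $O(j)$, one has $|\hat\eta_j|+|\hat\psi_j|\lesssim_k j^{-1}(|f_j|+|g_j|)$, whence $e^{2\sigma|j|}\langle j\rangle^{2s}(\hat\eta_j^2+\hat\psi_j^2)\lesssim e^{2\sigma|j|}\langle j\rangle^{2(s-1)}(f_j^2+g_j^2)$, so that $(\hat\eta,\hat\psi)\in H^{\sigma,s}_{\mathtt{ev}_0}\times H^{\sigma,s}_{\mathtt{odd}}$; parities, reality and the zero-average condition are preserved by construction. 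Hence $R$ is exactly the set in \eqref{formaR}. Since that set is the kernel of the bounded linear functional $(f,g)\mapsto g_k-c^*_k f_k$ on $H^{\sigma,s-1}_{\mathtt{odd}}\times H^{\sigma,s-1}_{\mathtt{ev}_0}$ — reading off a single Fourier coefficient is continuous for these norms — it is closed with $\mathrm{codim}\,R=1$.

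Finally, for the transversality I would differentiate \eqref{linF} in $c$, getting $\pa_c\de_{(\eta,\psi)}F(0,0,c)=\mathrm{diag}(\pa_x,\pa_x)$, so that
\begin{equation*}
(\pa_c\de_{(\eta,\psi)}F)(0,0,c^*_k)\vet{\sqrt{k}\cos(kx)}{\sqrt{g}\sin(kx)}=\vet{-k\sqrt{k}\,\sin(kx)}{k\sqrt{g}\,\cos(kx)} ,
\end{equation*}
a vector of $H^{\sigma,s-1}_{\mathtt{odd}}\times H^{\sigma,s-1}_{\mathtt{ev}_0}$ supported on the single mode $j=k$, with $f_k=-k\sqrt{k}$ and $g_k=k\sqrt{g}$. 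Membership in $R$ would require $k\sqrt{g}=c^*_k\,f_k=\sqrt{g/k}\,(-k\sqrt{k})=-k\sqrt{g}$, which is false; hence the transversality condition $(\pa_c\de_{(\eta,\psi)}F)(0,0,c^*_k)[u^*]\notin R$ holds. The only step requiring genuine care is the derivative count in the middle paragraph (the map $M_j^{-1}$ gains exactly one derivative, matching $H^{\sigma,s-1}\to H^{\sigma,s}$) together with the $\sin/\cos$ bookkeeping; everything else is an explicit $2\times 2$ computation already essentially contained in the proof of Lemma \ref{lem:ker}.
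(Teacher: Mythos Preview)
Your proof is correct and follows essentially the same approach as the paper: both diagonalize $L$ in the Fourier basis via the $2\times 2$ matrices $M_j$, invert for $j\neq k$ with the $j^{-1}$ gain to recover the $H^{\sigma,s}$ regularity of the preimage, identify the range condition at $j=k$ as $g_k=c^*_k f_k$, and verify transversality by the same explicit computation of $\pa_c\de_{(\eta,\psi)}F=\mathrm{diag}(\pa_x,\pa_x)$ applied to $u^*$. Your remark that $R$ is the kernel of the bounded functional $(f,g)\mapsto g_k-c^*_k f_k$ is a clean way to phrase the closedness and codimension-one claim, which the paper leaves implicit.
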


\begin{proof}
A vector  $\vet{f}{g}\in H^{\sigma,s-1}_{\mathtt{odd}}(\bT)\times  H^{\sigma,s-1}_{\mathtt{ev}_0}(\bT) $ belongs to $ R $ if and only if 
there is $\vet{\eta}{\psi} \in H^{\sigma,s}_{\mathtt{ev}_0} \times  H^{\sigma,s}_{\mathtt{odd}}$ 
such that, recalling  \eqref{linF} and \eqref{svifou},  
\begin{equation}
\label{CR2.sys}
\begin{bmatrix}
- c^*_k j & j \\
-g & c^*_k j 
\end{bmatrix}\vet{\eta_j}{\psi_j} = 
\vet{f_j}{g_j} \quad \forall j \geq 1  
\quad \text{where} \quad 
\vet{f(x)}{g(x)} = \sum_{j\geq 1} \vet{f_j \sin(jx)}{g_j \cos(jx)}  \, . 
\end{equation}
For any $j \neq k$, by \eqref{det.ck}, system \eqref{CR2.sys} has the unique solution 
\begin{equation}
\label{sol.1}
\eta_j = \frac1g \frac{\sqrt{k}}{k-j}(\sqrt{g}f_j-\sqrt{k}g_j) \, , \quad \psi_j = \frac1{j\sqrt{g}}\frac{\sqrt{k}}{k-j}(\sqrt{kg}f_j-jg_j) \, .
\end{equation}
 If  $j = k$, the system \eqref{CR2.sys}  is solvable if and only if
\begin{equation}
\label{fkgk}
\sqrt{g} f_k = \sqrt{k} g_k
\end{equation}
and  a solution is  $\eta_k = - \frac{1}{\sqrt{kg}}f_k$, $\psi_k = 0$.
By  \eqref{sol.1} we deduce that 
$|\eta_j|,  |\psi_j| \leq \frac{C_k}{j} (|f_j| + |g_j|)$, for any $ j \in \bN \setminus \{k \}$, implying 
that $ (\eta, \psi) \in H^{\sigma,s} $, actually
$$
\| \eta \|_{H^{\sigma,s}},\; \| \psi \|_{H^{\sigma,s}}  \leq C_k (\|f\|_{H^{\sigma,s-1}}+\|g\|_{H^{\sigma,s-1}}) \, . 
$$ 
In conclusion, 
the range $ R $ of $ L $ has the form \eqref{formaR},  
by \eqref{fkgk} and $ c^*_k = \sqrt{g/k} $. 

Finally differentiating \eqref{linF} one computes
 $$
 (\pa_c \de_{(\eta,\psi)} F)(0,0,c^*_k)\vet{\sqrt{k}\cos(kx)}{\sqrt{g}\sin(kx)}= {\scriptsize \begin{bmatrix} \pa_x \ \ 0 \\ 0 \ \ \pa_x \end{bmatrix}}\vet{\sqrt{k}\cos(kx)}{\sqrt{g}\sin(kx)} =\vet{-k^{\frac32}\sin(kx)}{k\sqrt{g}\cos(kx)} $$
 which  does not belong to the range $R$ in \eqref{formaR}.
\end{proof}

All the  assumptions of the Crandall-Rabinowitz Theorem are verified,  proving Theorem \ref{LeviCivita}.

\appendix

\section{Basic properties of the Dirichlet-Neumann operator}
\label{app:DN}

The linear Dirichlet-Neumann operator $ G(\eta) $ defined in \eqref{DN1}
is self-adjoint with respect to the $ L^2 $ scalar product, 
$$
\big( G(\eta)\psi_1, \psi_2 \big)_{L^2} = 
\int_{{\cal D}_\eta} \nabla \Phi_1 \cdot \nabla \Phi_2 
\, \de x = \big( G(\eta)\psi_2, \psi_1 \big)_{L^2} \, , 
$$
where $ \Phi_1 $ and $ \Phi_2 $ are the harmonic functions associated to $ \psi_1, \psi_2 $ as in \eqref{Phi}.
Thus $ G(\eta) $ is semi-positive definite
\begin{equation}\label{DNSP}
\big( G(\eta)\psi, \psi \big)_{L^2} = \int_{{\cal D}_\eta} |\nabla \Phi |^2 \de x \geq 0 \, , 
\end{equation}
and its kernel contains only the constant functions, $ G(\eta)[1] = 0 $.
In particular 
\eqref{DNSP} implies also the unicity of the solutions of \eqref{Phi}. 

We list other classical algebraic properties of the Dirichlet-Neumann used in the paper. 

\begin{lem} 
The Dirichlet-Neumann $ G(\eta) $ in \eqref{DN1} is:
\\[1mm] 
(i) invariant under space translations
 \begin{equation}\label{DN-inv}
 \tau_\theta G(\eta)\psi = G( \tau_\theta \eta)[ \tau_\theta \psi] \, , \quad 
 \tau_\theta u (x) := u (x + \theta ) \, , \ \  \forall \theta \in \bR^d \, ;
 \end{equation}
(ii) invariant under the reflection at the origin, namely 
\begin{equation}\label{DN-rev}
G(  \eta^\vee ) [ \psi^\vee ] = \left( G(\eta) [\psi ] \right)^\vee  \quad
\text{where} \quad  f^\vee (x) := f (-  x)  \, ; 
\end{equation} 
(iii) constant along vertical translations, i.e. 
\begin{equation}\label{DN-inf}
 G(\eta + m) = G(\eta) \, , \quad \forall  m \in \mathbb{R} \, . 
 \end{equation} 
 \end{lem}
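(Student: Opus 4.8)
The plan is to prove the three identities by a single device: given the harmonic extension $\Phi$ of a pair $(\eta,\psi)$ through \eqref{Phi}, one applies to $\Phi$ the corresponding rigid change of variables, checks that the resulting function solves the extension problem associated to the transformed pair, and then invokes the uniqueness of solutions of \eqref{Phi} (which follows from the energy identity \eqref{DNSP}) to identify it as \emph{the} harmonic extension of that pair. The stated identity for $G$ then follows by substituting this function into the formula \eqref{DN1} and applying the chain rule.

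Concretely, for (i) I would set $\widetilde\Phi(x,y):=\Phi(x+\theta,y)$: since $\Delta_{x,y}$ commutes with translations in $x$, $\widetilde\Phi$ is harmonic on $\{(x,y):(x+\theta,y)\in{\cal D}_\eta\}={\cal D}_{\tau_\theta\eta}$, its trace at $y=\eta(x+\theta)$ equals $\tau_\theta\psi$, and $\pa_y\widetilde\Phi(x,y)\to0$ as $y\to-\infty$; hence $\widetilde\Phi$ is the extension of $(\tau_\theta\eta,\tau_\theta\psi)$, and plugging it into \eqref{DN1}, together with $\nabla\widetilde\Phi(x,y)=(\nabla\Phi)(x+\theta,y)$ and $\nabla(\tau_\theta\eta)(x)=(\nabla\eta)(x+\theta)$, yields \eqref{DN-inv}. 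For (iii) the same works with the vertical shift $\widetilde\Phi(x,y):=\Phi(x,y-m)$, which is harmonic on ${\cal D}_{\eta+m}$ with trace $\psi$ at $y=\eta(x)+m$ and the correct decay; since $\nabla(\eta+m)=\nabla\eta$, formula \eqref{DN1} immediately gives \eqref{DN-inf}.

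The only step requiring a little care is (ii), where one takes $\widetilde\Phi(x,y):=\Phi(-x,y)$: this is harmonic on ${\cal D}_{\eta^\vee}$ with trace $\psi^\vee$, hence is the extension of $(\eta^\vee,\psi^\vee)$, but in evaluating \eqref{DN1} one must track the minus signs produced by the chain rule, namely $\nabla\widetilde\Phi(x,y)=-(\nabla\Phi)(-x,y)$ and $\nabla\eta^\vee(x)=-(\nabla\eta)(-x)$. The two signs cancel in the product $\nabla\eta^\vee\cdot\nabla\widetilde\Phi$ while $\pa_y\widetilde\Phi(x,y)=(\pa_y\Phi)(-x,y)$ is unchanged, so one gets $[G(\eta^\vee)\psi^\vee](x)=[G(\eta)\psi](-x)$, which is \eqref{DN-rev}. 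I do not expect any genuine obstacle: since all three changes of variables are rigid, the transformed functions automatically satisfy the decay hypothesis in \eqref{Phi} (equivalently $\nabla\widetilde\Phi(x,y)\to0$, cf. Remark \ref{remdecay}), so the uniqueness argument applies in each case without further work.
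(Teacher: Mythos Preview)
Your proposal is correct and follows essentially the same approach as the paper: in each case one transports the harmonic extension $\Phi$ by the relevant rigid motion, verifies that the transformed function solves \eqref{Phi} for the transformed data, and then reads off the identity from \eqref{DN1}. Your explicit invocation of uniqueness via \eqref{DNSP} and your careful tracking of the sign cancellation in (ii) match the paper's argument exactly.
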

 
 \begin{proof}
Let us prove \eqref{DN-inv}. 
Let $ \Phi $ be the solution of \eqref{Phi}.  
For any $ \theta \in \bR^d $ the harmonic function 
 $$
 \Phi_{\theta} (x,y) := \Phi(x+ \theta,y)  \quad 
 \forall (x,y) \in {\cal D}_{\tau_\theta \eta} = \{ y < \eta (x+ \theta )\}
 $$
 solves
 $$
\Delta_{x,y} \Phi_{\theta} = 0 \  \text{in} \ {\cal D}_{\tau_\theta \eta} \, ,
\quad 
\Phi_\theta (x,  \tau_\theta \eta(x) ) = \tau_\theta \psi(x)  \, , \quad 
\pa_y \Phi_\theta (x,y) \to 0  \mbox{ as } y \to - \infty \, .
$$ 
Therefore, by \eqref{DN1},  
$$
\begin{aligned}
G( \tau_\theta \eta )[\tau_\theta \psi ] 
& = 
(\pa_y \Phi_\theta) (x, \tau_\theta \eta) - 
(\nabla \tau_\theta \eta)(x) \cdot (\nabla \Phi_\theta )(x,\tau_\theta \eta) \\
& = 
(\pa_y \Phi) (x+\theta, \eta(x+\theta)) - 
(\nabla \eta)(x+\theta) \cdot (\nabla \Phi)(x+\theta, \eta(x+\theta)) 
= \tau_\theta  G(\eta )[\psi ]
\end{aligned}
$$
proving \eqref{DN-inv}.  To prove \eqref{DN-rev}, consider the harmonic function 
 $$
 \Phi^\vee (x,y) := \Phi(-x,y)  \quad 
 \forall (x,y) \in {\cal D}_{\eta^\vee} = \{ y < \eta^	\vee (x) \}
 $$
which solves
   $$
\Delta_{x,y} \Phi^\vee = 0 \  \text{in} \ {\cal D}^\vee_{\eta} \, ,
\quad 
\Phi^\vee  (x, \eta^\vee (x)) = \psi^\vee (x)  \, , \quad 
\pa_y \Phi^\vee  (x,y) \to 0  \mbox{ as } y \to - \infty \, .
$$ 
Therefore  \eqref{DN-rev} follows by  
$$
\begin{aligned}
G(  \eta^\vee )[  \psi^\vee ] 
& = 
(\pa_y \Phi^\vee) (x, \eta^\vee (x) ) - 
(\nabla \eta^\vee)(x) \cdot (\nabla \Phi^\vee )(x, \eta^\vee (x)) \\
& = 
(\pa_y \Phi) (-x, \eta (-x) ) - 
(\nabla \eta)(-x) \cdot (\nabla \Phi )(-x, \eta (-x) ) = G(\eta )[\psi ](-x) \, .
\end{aligned}
$$
For any $ m  \in \bR $ the harmonic function 
 $$
 \Phi_m (x,y) := \Phi(x,y-m)  \quad 
 \forall (x,y) \in {\cal D}_{\eta + m} = \{ y < \eta (x) + m \}
 $$
 solves
  $$
\Delta_{x,y} \Phi_m = 0 \  \text{in} \ {\cal D}_{\eta + m} \, ,
\quad 
\Phi_m (x, \eta(x) + m ) = \psi(x)  \, , \quad 
\pa_y \Phi_m  (x,y) \to 0  \mbox{ as } y \to - \infty \, .
$$ 
Therefore, by \eqref{DN1},  
$$
\begin{aligned}
G(  \eta + m )[  \psi ] 
& = 
(\pa_y \Phi_m) (x, \eta (x) + m ) - 
(\nabla \eta)(x) \cdot (\nabla \Phi_m )(x, \eta (x) +m) \\
& = 
(\pa_y \Phi) (x, \eta (x) ) - 
(\nabla \eta)(x) \cdot (\nabla \Phi )(x, \eta (x) ) = G(\eta )[\psi ] 
\end{aligned}
$$
proving  \eqref{DN-inf}. 
\end{proof}

\section{Functional spaces}
\label{App1}

We collect in this Appendix some properties of 
the  function spaces $ H^{\sigma,s} $ and $ {\mathcal H}^{\sigma,s,a} $.

\subsection{The spaces $ H^{\sigma,s} $} \label{App1A}

We first note the following characterization of the spaces $ H^{\sigma, s} $. 

\begin{lem}\label{lem:equinorms}
{\bf (Characterization of $ H^{\sigma, s} $)} 
The space 
$H^{\sigma, s}(\bT^d)$, $ \sigma > 0 $,  coincides with the periodic functions 
$ u(x) $ 
which admit an extension $ u(z) $ in the complex strip 
$$ 
\bT^d_\sigma := \bT^d + \im [-\sigma,\sigma]^d =
\Big\{ z = x + \im y \, \colon  \, \, x \in \bT^d \, , \; y\in \bR^d,\, \ 
|y|_{\infty} := \max\{|y_1|, \ldots, |y_d| \} \leq \sigma \Big\} \, , 
$$
which is analytic in $ |y|_{\infty} < \sigma $, and 
whose traces at the boundaries  $ u( \cdot  + \im y ) $,  $ |y|_\infty = \sigma $,  
belong to   the Sobolev space $H^s := H^s(\bT^d)$, with equivalence of the norms
\begin{equation}\label{uequiv}
\| u \|_{H^{\sigma, s}} \simeq_{d} \, \sup_{| y |_\infty \leq \sigma} 
\Big\{ \| u ( \cdot + \im y ) \|_{H^s} \Big \} \, .
\end{equation}
\end{lem}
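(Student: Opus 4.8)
The plan is to make the extension explicit through Fourier series and then reduce everything to two elementary inequalities. For $u(x)=\sum_{k\in\bZ^d}u_k e^{\im k\cdot x}$ one sets $u(z):=\sum_{k\in\bZ^d}u_k e^{\im k\cdot z}$, so that for $z=x+\im y$ the trace $u(\cdot+\im y)$ has Fourier coefficients $u_k e^{-k\cdot y}$ and
\begin{equation*}
\|u(\cdot+\im y)\|_{H^s}^2=\sum_{k\in\bZ^d}\langle k\rangle^{2s}\,|u_k|^2\,e^{-2k\cdot y}\,.
\end{equation*}
Since $|k\cdot y|\le|k|_1\,|y|_\infty\le\sigma|k|_1$ on the closed strip, one has $e^{-2k\cdot y}\le e^{2\sigma|k|_1}$, whence $\sup_{|y|_\infty\le\sigma}\|u(\cdot+\im y)\|_{H^s}\le\|u\|_{H^{\sigma,s}}$. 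To turn this into the first inclusion I would first check, for $u\in H^{\sigma,s}$, that the series defining $u(z)$ converges absolutely and uniformly on each substrip $|y|_\infty\le\sigma'<\sigma$: by Cauchy--Schwarz,
\begin{equation*}
\sum_{k\in\bZ^d}|u_k|\,e^{\sigma'|k|_1}\le\|u\|_{H^{\sigma,s}}\Big(\sum_{k\in\bZ^d}\langle k\rangle^{-2s}\,e^{-2(\sigma-\sigma')|k|_1}\Big)^{1/2}<\infty\,,
\end{equation*}
the last sum being finite because the exponential decay beats the polynomial factor $\langle k\rangle^{-2s}$ regardless of the sign of $s$. Hence $u(z)$ is analytic in $|y|_\infty<\sigma$ as a locally uniform limit of entire partial sums, $y\mapsto u(\cdot+\im y)$ is continuous into $H^s$, and the boundary traces lie in $H^s$ with the bound just displayed; by uniqueness of analytic continuation this $u(z)$ is the only such extension.

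For the converse I would assume $u$ has an analytic extension $U(z)$ on $|y|_\infty<\sigma$ with boundary traces in $H^s$; note that $U$ is $2\pi$-periodic in $\mathrm{Re}\,z$ since periodicity propagates by uniqueness of analytic continuation. The key step is to identify the Fourier coefficients of the extension. For $|y|_\infty<\sigma$ put $c_k(y):=(2\pi)^{-d}\int_{\bT^d}U(x+\im y)\,e^{-\im k\cdot x}\,\de x$; differentiating under the integral sign (legitimate because $U$ is smooth on compact subsets of the strip), using the Cauchy--Riemann relations $\pa_{y_j}U=\im\,\pa_{x_j}U$, and integrating by parts in $x_j$ (no boundary term, by periodicity), one gets $\pa_{y_j}c_k(y)=-k_j\,c_k(y)$ for $j=1,\dots,d$, hence $c_k(y)=c_k(0)\,e^{-k\cdot y}=u_k\,e^{-k\cdot y}$. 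Consequently $\|U(\cdot+\im y)\|_{H^s}^2=\sum_{k\in\bZ^d}\langle k\rangle^{2s}|u_k|^2 e^{-2k\cdot y}$, the same formula as before.

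It then remains to bound $\|u\|_{H^{\sigma,s}}$ from above by the supremum of the traces. Here I would evaluate at the $2^d$ corners $y^{(\varepsilon)}:=-\sigma\varepsilon$, $\varepsilon\in\{-1,1\}^d$ (or, to avoid touching the boundary, at $-\rho\varepsilon$ with $\rho\uparrow\sigma$ followed by a monotone-convergence passage), and sum over $\varepsilon$:
\begin{equation*}
\sum_{\varepsilon\in\{-1,1\}^d}\|U(\cdot+\im y^{(\varepsilon)})\|_{H^s}^2=\sum_{k\in\bZ^d}\langle k\rangle^{2s}|u_k|^2\prod_{j=1}^d 2\cosh(2\sigma k_j)\ \ge\ \sum_{k\in\bZ^d}\langle k\rangle^{2s}|u_k|^2\,e^{2\sigma|k|_1}\,,
\end{equation*}
where $2\cosh t=e^t+e^{-t}\ge e^{|t|}$ was used term by term. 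Since there are $2^d$ summands on the left, this yields $\|u\|_{H^{\sigma,s}}^2\le 2^d\sup_{|y|_\infty\le\sigma}\|u(\cdot+\im y)\|_{H^s}^2$, which together with the opposite inequality above proves both inclusions and the equivalence of norms \eqref{uequiv} with constants depending only on $d$. The only genuinely delicate point is the Fourier-coefficient identification $c_k(y)=u_k e^{-k\cdot y}$, i.e.\ passing from an abstract analytic extension to the concrete exponential series; everything else reduces to the two inequalities $e^{-2k\cdot y}\le e^{2\sigma|k|_1}$ and $\sum_{\varepsilon}\prod_j 2\cosh(2\sigma k_j)\ge e^{2\sigma|k|_1}$.
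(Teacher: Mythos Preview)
Your proof is correct and follows essentially the same approach as the paper: define the extension via the Fourier series, bound $\|u(\cdot+\im y)\|_{H^s}\le\|u\|_{H^{\sigma,s}}$ using $|k\cdot y|\le\sigma|k|_1$, and for the reverse inequality evaluate at the $2^d$ corners $y=\pm\sigma\vec\e$ to recover the weight $e^{2\sigma|k|_1}$. The paper organizes the last step as a partition of $\bZ^d$ into sign-orthants (on each of which $-k\cdot\vec\e=|k|_1$), whereas you sum over all corners and use $\prod_j 2\cosh(2\sigma k_j)\ge e^{2\sigma|k|_1}$; these are equivalent packagings of the same computation, and your treatment of the converse direction (identifying the Fourier coefficients of an abstract analytic extension via Cauchy--Riemann) is in fact more complete than the paper's, which tacitly assumes the Fourier representation.
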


\begin{proof}
Let  $ u (x) $ be a function in $ H^{\sigma, s}(\bT^d) $. 
For any $ z \in \bC^d $, $ z = x + \im y $, $|y|_\infty \leq \sigma $, 
we define its extension
$$
u(z) := \sum_{k \in \bZ^d} u_k \,  e^{\im k \cdot z} 
$$
which  is analytic for $ |y|_\infty < \sigma $.  
For any $ y \in \bR^d$ with $|y|_\infty \leq \sigma $, 
the Sobolev norm $ \| \ \|_{H^s} $ of the periodic function
\begin{equation}\label{uya}
x \mapsto u^{(y)} (x) := u( x + \im y ) =
\sum_{k \in \bZ^d} u_k \, e^{-  k \cdot y} e^{\im k \cdot x}  
\end{equation}
is bounded by 
\begin{align*}
\| u^{(y)} \|_{H^s}^2 = \sum_{k \in \bZd}
| u_k|^2 e^{- 2 k \cdot y } \langle k \rangle^{2s}
& \leq \sum_{k \in \bZd}
|u_k|^2 e^{2 |y|_\infty |k|_1} \langle k \rangle^{2s} \notag  \\
& \leq \sum_{k \in \bZd} | u_k|^2 e^{2 \sigma |k|_1} \langle k \rangle^{2s} =  
\| u \|_{H^{\sigma,s}}^2 \, . 
\end{align*}
Thus $ u^{(y)} ( \cdot ) $ belongs to $ H^s $ and 
 $ \| u^{(y)} \|_{H^s} \leq \| u \|_{H^{\sigma,s}} $.  
 \\[1mm]
In order to prove the equivalence \eqref{uequiv},  consider the partition of  $ \bZ^d $, 
$$
\bZ^d = \bigcup_{\vec{\e} \in \{ \pm 1 \}^d } \bZ^d_{\vec \e} \, , \quad  
\bZ^d_{\vec \e} := \left\{ k = (k_1, \ldots, k_d) \in \bZ^d \, : \, 
\begin{cases} 
k_j  > 0 \, ,  \quad \text{if} \ \e_j = - 1 \, ,  \\  
k_j \leq 0 \, , \quad \text{if} \ \e_j =  1\, , 
\end{cases}
\forall j = 1, \ldots, d 
\right\}  \, . 
$$
For any $ \vec \e = (\e_1, \ldots, \e_d) \in \{ \pm 1\}^d  $, the function
$ u^{(\sigma \vec \e)} $ defined as in \eqref{uya} satisfies 
\begin{align*}
\| u^{(\sigma \vec \e)}  \|_{H^s}^2 
= \sum_{k \in \bZd} | u_k|^2 
e^{- 2 \sigma  k \cdot \vec \e}  \langle k \rangle^{2s} 
& \geq  
\sum_{k \in \bZd_{\vec \e}} | u_k|^2 \langle k \rangle^{2s}
e^{2 \sigma (|k_1|+ \ldots + |k_d|)}  
\end{align*}
and therefore
$$
\|  u \|_{H^{\sigma,s}}^2  = 
\sum_{\vec \e \in \{ \pm 1\}^d} \sum_{k \in \bZd_{\vec \e}} | u_k|^2 \langle k \rangle^{2s}
e^{2 \sigma |k|_1} \leq 2^d \sup_{|y|_\infty = \sigma} \| u^{(y)}  \|_{H^s}^2  \, . 
$$
The equivalence \eqref{uequiv} is proved.
\end{proof}

The spaces $ H^{\sigma,s} $, $ s > d/ 2 $,
form an algebra with respect to the product of functions, 
and 
 the following more general  tame estimates hold.
  
\begin{lem}\label{lem:surfacingsubalgebra}
 {\bf (Tame)} Let  $\sigma \geq 0$ and $ s \geq \mathfrak{s}_0 > d/2 $. 
There exist positive constants $ C_{s, \mathfrak{s}_0} \geq 1 $ (non decreasing in $ s $) such that, for any 
$ f, g \in H^{\sigma, s}$, one has 
\begin{equation}\label{tameHs}
\| f g \|_{H^{\sigma,s}} \leq C_{s, \mathfrak{s}_0}\big( \| f \|_{H^{\sigma,s}} \| g \|_{H^{\sigma, \mathfrak{s}_0}} + \| f \|_{H^{\sigma, \mathfrak{s}_0}} \| g \|_{H^{\sigma,s}} \big) \, . 
\end{equation}
In particular, for any $j\geq 1$,
\begin{equation}\label{potHs}
\| f^j \|_{H^{\sigma, s}} \leq (2C_{s, \mathfrak{s}_0} \|f\|_{H^{\sigma, \mathfrak{s}_0}})^{j-1} \|f\|_{H^{\sigma, s}}\, .
\end{equation}
\end{lem}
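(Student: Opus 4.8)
The plan is to pass to Fourier coefficients and reduce the bilinear estimate \eqref{tameHs} to a weighted convolution inequality on the lattice $\bZd$, which is the classical route for Sobolev-type tame estimates. Writing $f(x)=\sum_k f_k e^{\im k\cdot x}$ and $g(x)=\sum_k g_k e^{\im k\cdot x}$, so that $(fg)_k=\sum_{j\in\bZd}f_{k-j}g_j$, the first step is to absorb the analytic weight. Since $|k|_1\le|k-j|_1+|j|_1$ \emph{with no loss of constant}, setting $u_k:=e^{\sigma|k|_1}|f_k|$ and $v_k:=e^{\sigma|k|_1}|g_k|$ one gets $e^{\sigma|k|_1}|(fg)_k|\le(u*v)_k$, where $*$ is convolution on $\bZd$, whence $\|fg\|_{H^{\sigma,s}}\le\|\langle\cdot\rangle^s(u*v)\|_{\ell^2(\bZd)}$. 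Thus the exponential weight disappears and one is left with a purely Sobolev-weighted problem.

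The second step is the Moser splitting: from $\langle k\rangle\le\langle k-j\rangle+\langle j\rangle$ one has $\langle k\rangle^s\lesssim_s\langle k-j\rangle^s+\langle j\rangle^s$, so, pointwise in $k$ since all quantities are nonnegative,
\[
\langle k\rangle^s(u*v)_k\lesssim_s\big((\langle\cdot\rangle^su)*v\big)_k+\big(u*(\langle\cdot\rangle^sv)\big)_k\,.
\]
Applying Young's inequality $\|a*b\|_{\ell^2}\le\|a\|_{\ell^2}\|b\|_{\ell^1}$ I would then bound $\|\langle\cdot\rangle^s(u*v)\|_{\ell^2}\lesssim_s\|\langle\cdot\rangle^su\|_{\ell^2}\|v\|_{\ell^1}+\|u\|_{\ell^1}\|\langle\cdot\rangle^sv\|_{\ell^2}$, and recognise $\|\langle\cdot\rangle^su\|_{\ell^2}=\|f\|_{H^{\sigma,s}}$ and $\|\langle\cdot\rangle^sv\|_{\ell^2}=\|g\|_{H^{\sigma,s}}$. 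The third step handles the low-regularity $\ell^1$ factors by Cauchy--Schwarz: $\|v\|_{\ell^1}=\sum_k\langle k\rangle^{-\mathfrak{s}_0}\big(\langle k\rangle^{\mathfrak{s}_0}e^{\sigma|k|_1}|g_k|\big)\le\big(\sum_k\langle k\rangle^{-2\mathfrak{s}_0}\big)^{1/2}\|g\|_{H^{\sigma,\mathfrak{s}_0}}$, the prefactor being finite exactly because $\mathfrak{s}_0>d/2$; likewise for $\|u\|_{\ell^1}$ with $\|f\|_{H^{\sigma,\mathfrak{s}_0}}$. Combining the three steps yields \eqref{tameHs} with $C_{s,\mathfrak{s}_0}$ of the announced form, which may be taken non-decreasing in $s$ and $\ge1$.

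Finally, \eqref{potHs} follows by a routine induction on $j$. Set $M:=2C_{s,\mathfrak{s}_0}$; the case $j=1$ is trivial, and assuming $\|f^j\|_{H^{\sigma,s'}}\le(M\|f\|_{H^{\sigma,\mathfrak{s}_0}})^{j-1}\|f\|_{H^{\sigma,s'}}$ for all $s'\ge\mathfrak{s}_0$ (in particular for $s'=\mathfrak{s}_0$, where it gives $\|f^j\|_{H^{\sigma,\mathfrak{s}_0}}\le M^{j-1}\|f\|_{H^{\sigma,\mathfrak{s}_0}}^j$), one applies \eqref{tameHs} to $f^{j+1}=f\cdot f^j$ and inserts the inductive bounds for both $\|f^j\|_{H^{\sigma,\mathfrak{s}_0}}$ and $\|f^j\|_{H^{\sigma,s}}$, using $C_{\mathfrak{s}_0,\mathfrak{s}_0}\le C_{s,\mathfrak{s}_0}$, to close the induction at step $j+1$.

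As for obstacles, there is essentially none: the argument is entirely standard. The only point deserving a line of care is that the exponential weight must split through the convolution without any multiplicative constant, which is precisely what the $\ell^1$-type exponent $|k|_1=|k_1|+\dots+|k_d|$ guarantees; and the sole place the dimensional restriction $\mathfrak{s}_0>d/2$ enters is the summability of $\langle k\rangle^{-2\mathfrak{s}_0}$ over $\bZd$. An alternative, equally short route would be to invoke Lemma \ref{lem:equinorms} and reduce to the classical tame product estimate in $H^s(\bT^d)$ applied to the traces $f(\cdot+\im y)$, $g(\cdot+\im y)$ uniformly over $|y|_\infty\le\sigma$, but the direct Fourier computation above is self-contained.
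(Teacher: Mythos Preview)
Your proof is correct and follows the classical Fourier route that the paper alludes to (the paper omits the proof, referring to Lemma~4.5.1 in \cite{BB20} and noting it is analogous to Lemma~\ref{lem:tame}). The only cosmetic difference is that the paper's template argument in Lemma~\ref{lem:tame} splits the frequency lattice into a high/low region and applies Cauchy--Schwarz inside the convolution, whereas you use the pointwise bound $\langle k\rangle^s\lesssim_s\langle k-j\rangle^s+\langle j\rangle^s$ followed by Young's inequality $\ell^2*\ell^1\hookrightarrow\ell^2$; these are interchangeable variants of the same estimate, and your induction for \eqref{potHs} matches the paper's derivation of \eqref{potenze} verbatim.
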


\begin{proof}
The classical proof follows adapting the proof  of Lemma 4.5.1 in \cite{BB20}
and it  is quite similar to that of Lemma \ref{lem:tame}. So we omit it.  
Estimate \eqref{potHs} follows by induction from \eqref{tameHs} in the same way \eqref{potenze} descends from \eqref{algebrassa}. 
\end{proof}

\subsection{The spaces $\cH^{\sigma, s, a} $}\label{sec:A2}

\noindent
{\bf Proof of Lemma \ref{lem:trace}.}
For any  $u \in C^\infty_c(\bT^d \times \bR_{\leq 0})$, any $ y_0 \leq 0 $, 
 we have the inequality
$$
|  u_k(y_0) |^2 \leq 
2 \int_{-\infty}^0 | \pa_y  u_k(y) | \, |u_k(y)| \, \de y \, .
$$
Multiplying by  $\langle k \rangle^{2s}$ and using the elementary 
inequality
$2 \langle k \rangle^{2s} ab \leq \langle k \rangle^{2s-1} a^2 + \langle k \rangle^{2s+1} b^2 $, for any $a, b \geq 0 $, 
 we get that
$$
\begin{aligned}
\| u(\cdot, y_0) \|_{H^{\sigma,s}}^2 
& =
\sum_{k \in \bZd}   e^{2\sigma |k|_1}\langle k \rangle^{2s} 
|  u_k(y_0) |^2 \\
& \leq \sum_{k \in \bZd}   e^{2\sigma |k|_1} 
 \int_{-\infty}^0 2 \langle k \rangle^{2s} | \pa_y  u_k(y) | \, | u_k(y)| \, \de y  \\
& \leq 
\int_{-\infty}^0 \sum_{k \in \bZd}    e^{2\sigma |k|_1} \langle k \rangle^{2s-1} | \pa_y u_k(y) |^2 \de y 
+ 
\int_{-\infty}^0  \sum_{k \in \bZd}   
e^{2\sigma |k|_1} \langle k \rangle^{2s+1}  |  u_k(y) |^2 \de y  \\
& = \| u \|_{L^2(\bR_{\leq 0}, H^{\sigma,s+\frac12} )}^2 +
 \| \pa_y u \|_{L^2(\bR_{\leq 0}, H^{\sigma,s-\frac12} )}^2  
\end{aligned} 
$$
which proves \eqref{uinfty} for smooth functions with compact support and then by density for all functions. Finally, recalling the definition of the norm $ \| \ \|_{\sigma,s,a}$
in \eqref{migliore}, we deduce  \eqref{est:trace}. \\[1mm]
\noindent
{\bf Proof of Lemma \ref{lem:und.varphi}.}
In view of \eqref{migliore} we have that 
\begin{align*}
\| e^{y|D|}g - g_0 \|_{\sigma, s+\frac12, a}^2
& =
\sum_{j = 0}^{s+ \frac12} \sum_{k \in \bZd\setminus \{0\} }
e^{2\sigma |k|_1 } \,  \langle k \rangle^{2(s+\frac12-j)} \,  |  g_k|^2
\int_{-\infty}^0 |\pa_y^j e^{|k| y} |^2 \, e^{-2 a y} \de y  \\
& = \sum_{j = 0}^{s+\frac12}  \sum_{k \in \bZ^d \setminus \{0\}} 
e^{2\sigma |k|_1 } \, \langle k \rangle^{2s+1} \langle k \rangle^{-2j}   \,   |  g_k|^2
|k|^{2j} \int_{-\infty}^0 e^{2(|k|-a) y}  \de y \\
& = ({s+\frac12})  \sum_{k \neq 0} 
\frac{\langle k \rangle^{2s+1}}{2(|k| - a)} \, e^{2\sigma |k|_1 } \,  | g_k|^2 \leq C_{a,s} 
\|g\|_{H^{\sigma, s}}^2 
\end{align*}
proving the lemma.\\[1mm]
\noindent
{\bf Proof of Proposition \ref{lem:algebray}.}
We define
$$
\cH^{\sigma, s, a}_\bR := \left\lbrace  u \;:\; \bT^d \times \bR \to \bC \;:\; \ \ 
 \|  u \|_{\sigma, s, a,\bR} < \infty
\right\rbrace
$$
endowed with the norm 
\begin{align}
\| u \|_{\sigma, s, a,\bR}^2 & := 
\sum_{j = 0}^s 
\| \pa_y^j u \|_{L^{2,a}( \bR,H^{\sigma,s-j})}^2 \notag  \\ 
& = 
\sum_{j = 0}^s \int_{\bR} \sum_{k \in \bZd}
 \, e^{2 \sigma |k|_1 } \, \langle k \rangle^{2(s-j)} \, 
| \pa_y^j  u_k  (y)|^2 e^{2 a |y|} \de y  \notag \\  
& = 
\sum_{j = 0}^s \sum_{k \in \bZd}
e^{2 \sigma |k|_1 } \, \langle k \rangle^{2(s-j)} \,  \| \pa_y^j  u_k \|_{L^{2,a}(\bR)}^2   
\label{migliore4R}
\end{align}
where,  given a Hilbert space $ X $, we have used the notation
$$
\| u \|_{L^{2,a}( \bR,X)}^2  := 
 \int_{\bR} \| u(y) \|_X^2 e^{2 a |y|} \de y  \, .
$$
By adapting the method in \cite{LM} of ``extension by reflection" we have the following lemma.
\begin{lem}\label{lem:ext}
{\bf (Extension operator)} There exists a linear bounded extension operator
$ {\cal E}_s: \cH^{\sigma, s, a} \to \cH^{\sigma, s, a}_\bR $
such that $  {\cal E}_s u = u $ a.e. on $ (- \infty,0) $. Thus 
\begin{equation}\label{stimeriflessione} 
\| u \|_{\sigma, s , a} \leq \| {\cal E}_s u \|_{\sigma, s, a,\bR} \lesssim_s \| u \|_{\sigma, s , a} \, .
\end{equation}
\end{lem}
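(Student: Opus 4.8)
The plan is to construct the extension operator $\cE_s$ following the classical Lions–Magenes reflection method, tailored to the exponential weight $e^{2a|y|}$ present in the norm $\|\cdot\|_{\sigma,s,a}$. First I would work fiberwise in the Fourier variable $k$: for each $k\in\bZ^d$ the function $y\mapsto u_k(y)$ lies in the weighted Sobolev space $H^s_a\big((-\infty,0]\big)$ with norm $\sum_{j=0}^s\langle k\rangle^{2(s-j)}\|\partial_y^j u_k\|_{L^{2,a}}^2$, and I want a bounded extension to $H^s_a(\bR)$ commuting with this weighted Fourier-in-$x$ structure. The standard device is a higher-order reflection: for $y>0$ set
\begin{equation*}
(\cE_s u)_k(y) := \sum_{\ell=1}^{s+1} \lambda_\ell\, u_k(-\ell y) \, ,
\end{equation*}
where the coefficients $\lambda_1,\dots,\lambda_{s+1}$ are the unique solution of the linear Vandermonde-type system $\sum_{\ell=1}^{s+1}(-\ell)^j\lambda_\ell = 1$ for $j=0,1,\dots,s$, which guarantees that $\cE_s u$ and its first $s$ derivatives match across $y=0$, so that $\cE_s u\in H^s_{\mathrm{loc}}(\bR)$ for each Fourier mode and, summing in $k$, $\cE_s u\in \cH^{\sigma,s,a}_\bR$.

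The estimates then reduce to two elementary one-variable facts applied with the weight. For the change of variables $y\mapsto -\ell y$ one has $\partial_y^j\big(u_k(-\ell y)\big) = (-\ell)^j (\partial_y^j u_k)(-\ell y)$, and the substitution $z=-\ell y$ in the integral $\int_0^\infty |(\partial_y^j u_k)(-\ell y)|^2 e^{2ay}\,\de y = \tfrac1\ell\int_{-\infty}^0 |(\partial_y^j u_k)(z)|^2 e^{-2az/\ell}\,\de z$. Here the key point is that $\ell\geq 1$, so $e^{-2az/\ell}\leq e^{-2az}=e^{2a|z|}$ for $z\leq 0$ (using $a\geq 0$); hence each reflected piece is controlled by $\tfrac1\ell\,\|\partial_y^j u_k\|_{L^{2,a}((-\infty,0])}^2$, with no loss from the weight. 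Multiplying by $\langle k\rangle^{2(s-j)}e^{2\sigma|k|_1}$, summing over $j=0,\dots,s$ and $k\in\bZ^d$, and absorbing the finitely many constants $|\lambda_\ell|$, $\tfrac1\ell$ into $\lesssim_s$, gives $\|\cE_s u\|_{\sigma,s,a,\bR}\lesssim_s\|u\|_{\sigma,s,a}$. The left inequality in \eqref{stimeriflessione} is immediate since $\cE_s u\restr{}{(-\infty,0)}=u$ and the $\bR$-norm dominates the $(-\infty,0]$-norm. A routine density argument (smooth compactly supported functions are dense, by a cutoff-and-mollification in $y$ combined with truncation of the Fourier series in $x$, the mollification done so as not to disturb the exponential weight) reduces the verification of the matching of derivatives and of the integral identities to the smooth case.

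The main obstacle I anticipate is not the reflection algebra but the bookkeeping needed to see that the weight $e^{2a|y|}$ is genuinely harmless: one must check that dilating $y\mapsto -\ell y$ with $\ell\geq 1$ contracts the weight rather than amplifying it, which is exactly why the reflection is taken with $\ell\geq 1$ (a ``reflection outward'') rather than the more common $y\mapsto -y/\ell$. A secondary technical point is the density step: because the norm involves the non-compact half-line with an exponential weight, one should first truncate to $y\geq -R$, then mollify, controlling the weighted tails by dominated convergence; this is standard but deserves a line. Once these are in place, Lemma \ref{lem:ext} follows, and it is precisely the tool that lets one transfer the tame product estimate on $\cH^{\sigma,s,a}_\bR$ (where Fourier analysis in all $d+1$ variables is available after an additional Fourier transform in $y$, mirroring Lemma \ref{lem:surfacingsubalgebra}) down to $\cH^{\sigma,s,a}$, yielding Proposition \ref{lem:algebray}.
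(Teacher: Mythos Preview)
Your proposal is correct and follows essentially the same route as the paper: the paper also uses the Lions--Magenes higher-order reflection $(\cE_s u)(y)=\sum_{i=0}^s \alpha_i^{(s)} u(-(i+1)y)$ for $y>0$, with the coefficients determined by the same Vandermonde system, and the weight is handled by exactly the observation you isolate, namely that the dilation factors are $\geq 1$ so that $e^{2a|z|/(i+1)}\leq e^{2a|z|}$ after the change of variables. The paper carries out the estimate in the $H^{\sigma,s-j}$ norm directly rather than fiberwise in $k$, but this is purely a difference in presentation.
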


\begin{proof}
We follow \cite{LM}.
For any $u \in {\cal C}^\infty_c (\bR, H^{\sigma,s}) $  we define
\begin{equation}\label{riflessione} 
(\cal E_s u)(y) := 
\begin{cases}
u(y) , \quad y \leq 0 \, , \\
\alpha_0^{(s)} u(-y) + \dots +\alpha_{s}^{(s)} u(-(s+1)y) \, , \quad y > 0 \, ,
\end{cases}
\end{equation}
where the coefficients $\alpha_j^{(s)}$, $j=0,\dots,s$ are to  be chosen in order to have
$$
\pa_y^j ({\cal E}_s u)(0) =  (\pa_y^j u)(0), \quad \forall j=0,\dots,s \, , 
$$
namely solve the  linear system
$$
\begin{pmatrix} 1 & 1 &\ \dots &  1 \\ 
-1 & -2 & \dots & -s-1 \\ \vdots &\  \vdots & \ddots & \vdots \\
\vdots &  \vdots & \ddots &\ \vdots \\
(-1)^{s} & (-2)^{s} &\ \dots & (-s-1)^{s}
  \end{pmatrix} \begin{pmatrix} \alpha_0^{(s)} \\ \vdots  \\ \vdots \\ \vdots \\ \alpha_s^{(s)} \end{pmatrix} = \begin{pmatrix} 1 \\ \vdots \\ \vdots \\ \vdots \\ 1 \end{pmatrix}.
$$
The above  Vandermonde matrix is invertible 
and thus the coefficients $\alpha_0^{(s)},\dots,\alpha_s^{(s)} $ are uniquely 
well-defined. Then  by  \eqref{riflessione} 
\begin{align*}
\| {\cal E}_s u \|_{\sigma,s,a,\bR}^2 
& \leq 
\|u\|^2_{\sigma,s,a} + 
 C_s \sum_{j=0}^s \sum_{i=0}^s \int_{0}^\infty 
 \|  \pa_y^j u(-(i+1)y) \|_{H^{\sigma,s-j}}^2 e^{2a|y|}\de y  \\
&\leq C_s' \sum_{i,j=0}^s  \int_{-\infty}^0 \| \pa_y^ju(z) \|_{H^{\sigma,s-j}}^2 e^{\frac{2a|z|}{i+1}}\de z  \lesssim_s \|u\|^2_{\sigma,s,a} \, .
\end{align*}
By density the operator $ u \mapsto {\cal E}_s u $ admits a bounded linear extension to 
$ {\cal E}_s: \cH^{\sigma, s, a} \to \cH^{\sigma, s, a}_\bR $ and the lemma follows.   
\end{proof}

In order to analyze the space $\cH^{\sigma,s,a}_\bR  $ we can  
use the Fourier transform in the variable $ y $.  
Given  a function $ y \mapsto u (y)  $ in 
$ L^2 (\bR, X)$, where $ X $ is a Hilbert space, we denote its Fourier transform
$$ 
\widehat u(\xi) := ({\cal F}u) (\xi) := \int_\bR \widehat{u}(y) e^{-\im \xi \, y} \de y
$$
and, by the inverse Fourier transform formula,  
$$
u(y) =  \int_\bR \widehat{u}(\xi) e^{\im \xi \, y} \rdj \xi \, \, , \quad 
\rdj :=  \frac{1}{2\pi}  \de \, . 
$$
When $ X = H^{\sigma,s}(\bT^d) $ we may also Fourier expand in $ x $ writing  
\begin{equation}\label{Fourierinverse}
u(y,x) = \sum_{k\in\bZd} u_k (y) e^{\im k\cdot x} =  \sum_{k\in\bZd} \int_\bR 
\widehat{u}_k(\xi) e^{\im (\xi y+k\cdot x)}\rdj \xi \, . 
\end{equation}
In the sequel we shall also denote $\widehat{u}_k(\xi) = \widehat{u}(k,\xi) $. 

The following lemma characterizes the space $\cH^{\sigma,s,a}_\bR$.
\begin{lem} {\bf (Characterization of $ \cH^{\sigma,s,a}_\bR$)}\label{lem:car2}
The space
$ \cH^{\sigma,s,a}_\bR $, $ a > 0 $,  coincides with the  functions 
$ y \mapsto u(y) \in L^2 (\bR, H^{\sigma,s})$ whose Fourier transform $ \xi \mapsto  \widehat u (\xi ) $ 
admits an extension $ \widehat u( \zeta ) $ in the complex strip 
$ 
\{  \zeta = \xi + \im \eta \, \colon  \, \, \xi \in \bR \, , \; \eta \in \bR,\, \ 
|\eta| \leq a \} $, analytic in $ |\eta | < a $, 
whose traces   at the boundaries  $ \widehat u( \cdot  + \varsigma \im a ) $,  
belong to  $ L^2(\bR, H^{\sigma,s})$ and 
$
\langle\xi\rangle^{j} \widehat{u}(\xi+\varsigma \im a) \in 
{L^2(\bR,H^{\sigma,s-j})} $, for any $ j=0,\dots,s $, $ \varsigma=\pm 1$,  
with equivalence of the norms
\begin{align}
\label{normequiv1}
 \|u\|_{\sigma,s,a,\bR}^2 
 & \simeq_{s,a}
\max_{\substack{j=0,\dots,s\\ \varsigma=\pm 1}}  \big\| 
\langle\xi\rangle^{j} \widehat{u}(\xi+\varsigma \im a)  \big\|_{L^2(\bR,H^{\sigma,s-j})}^2  \\
 & \simeq_{s,a} \max_{\substack{j=0,\dots,s\\ \varsigma=\pm 1}} 
 \sum_{k\in\bZd} \int_\bR \langle\xi\rangle^{2j} \langle k \rangle^{2(s-j)} e^{2\sigma |k|_1}   |\widehat{u}_k (\xi+\varsigma \im a)|^2 \de \xi   \notag \\
 & \simeq_{s,a} \max_{\varsigma=\pm 1}
\sum_{k\in\bZd} 
\int_\bR  \langle k,\xi \rangle^{2s} 
e^{2\sigma |k|_1}  |\widehat{u}_k(\xi+\varsigma \im a)|^2  
\de \xi    \label{normequiv2} 
 \end{align}
 where $\langle k,\xi\rangle := \sqrt{1 +|k|^2 +  |\xi|^{2}} $.
\end{lem}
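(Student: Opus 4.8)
The plan is to prove Lemma \ref{lem:car2} by a Paley--Wiener argument in the $y$ variable: in \eqref{migliore4R} the exponential weight $e^{2a|y|}$ is precisely what forces the Fourier transform $\widehat u(\xi)$ (in $y$) to admit a holomorphic extension to the strip $|\mathrm{Im}\,\zeta|<a$ with $L^2$ control on the horizontal lines $\mathrm{Im}\,\zeta=\pm a$, the $\langle k,\xi\rangle$--weights being produced afterwards by elementary symbol comparisons. Since every step below is an \emph{equivalence}, the same computation will simultaneously prove that $\cH^{\sigma,s,a}_\bR$ coincides with the space described in the statement and that the norms displayed in \eqref{normequiv1}--\eqref{normequiv2} are all comparable.

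First I would reduce to one-sided weights via the elementary bound $e^{2ay}+e^{-2ay}\le 2\,e^{2a|y|}\le 2\,(e^{2ay}+e^{-2ay})$, which gives, for any Hilbert space $X$, $\|w\|_{L^{2,a}(\bR,X)}^2\simeq\|e^{ay}w\|_{L^2(\bR,X)}^2+\|e^{-ay}w\|_{L^2(\bR,X)}^2$; applying it with $w=\pa_y^j u$ and $X=H^{\sigma,s-j}$ yields
\[
\|u\|_{\sigma,s,a,\bR}^2\ \simeq\ \sum_{j=0}^s\ \sum_{\varsigma=\pm1}\ \big\| e^{\varsigma a y}\,\pa_y^j u \big\|_{L^2(\bR,H^{\sigma,s-j})}^2 \,.
\]
Now $\cF\big(e^{\varsigma a y}\pa_y^j u\big)(\xi)=(\im(\xi+\im\varsigma a))^j\,\widehat u(\xi+\im\varsigma a)$, where $\widehat u(\cdot+\im\varsigma a)$ is the boundary trace of the holomorphic extension of $\widehat u$ — the existence of this extension being equivalent to finiteness of the left-hand side, which is the Paley--Wiener content dealt with at the end. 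By Plancherel, $|\im(\xi+\im\varsigma a)|^j=(\xi^2+a^2)^{j/2}\simeq_a\langle\xi\rangle^j$ (here $a>0$ is used), and since a finite sum is comparable to its largest term, the displayed expression is $\simeq_{s,a}\max_{j,\varsigma}\|\langle\xi\rangle^{j}\widehat u(\cdot+\im\varsigma a)\|_{L^2(\bR,H^{\sigma,s-j})}^2$, which is \eqref{normequiv1}; expanding the $H^{\sigma,s-j}$-norm into Fourier modes in $x$ gives its second line.

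For the last equivalence \eqref{normequiv2} I would use $\langle k\rangle^2+\langle\xi\rangle^2\simeq\langle k,\xi\rangle^2=1+|k|^2+|\xi|^2$ together with the binomial identity $(\langle k\rangle^2+\langle\xi\rangle^2)^s=\sum_{j=0}^s\binom{s}{j}\langle\xi\rangle^{2j}\langle k\rangle^{2(s-j)}$, which yields $\max_{0\le j\le s}\langle\xi\rangle^{2j}\langle k\rangle^{2(s-j)}\simeq_s\langle k,\xi\rangle^{2s}$ pointwise in $(k,\xi)$; inserting this inside the $\xi$-integral and $k$-sum turns the $(j,\varsigma)$-maximum of the middle line into the $\varsigma$-maximum of $\sum_{k}\int_\bR\langle k,\xi\rangle^{2s}e^{2\sigma|k|_1}|\widehat u_k(\xi+\im\varsigma a)|^2\,\de\xi$.

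It remains to justify the Paley--Wiener step, which I expect to be the only genuinely non-routine point. If $u\in\cH^{\sigma,s,a}_\bR$, then by the weight split $e^{\pm ay}u\in L^2(\bR,H^{\sigma,s})$, so for $|\eta|\le a$ the $H^{\sigma,s}$-valued integral $\widehat u(\xi+\im\eta):=\int_\bR u(y)e^{-\im(\xi+\im\eta)y}\,\de y=\cF\big(e^{\eta y}u\big)(\xi)$ converges absolutely (Cauchy--Schwarz, since $e^{\eta y-a'|y|}\in L^1(\bR)$ whenever $|\eta|<a'<a$) and, at $\eta=0$, equals $\widehat u$; holomorphy of $\zeta\mapsto\widehat u(\zeta)$ on the open strip follows from Morera's theorem applied coefficientwise to $\widehat u_k(\zeta)$, upgraded to an $H^{\sigma,s}$-valued statement by the uniform bound $\|\widehat u(\cdot+\im\eta)\|_{L^2}\le\|e^{\eta y}u\|_{L^2}$, and Plancherel identifies the traces $\widehat u(\cdot+\im\varsigma a)=\cF\big(e^{\varsigma a y}u\big)$ and their $\langle\xi\rangle^j$-weighted versions with $L^2(\bR,H^{\sigma,s-j})$ functions. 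Conversely, given $\widehat u(\zeta)$ analytic on the strip with the stated $L^2$ bounds, one sets $u:=\cF^{-1}\big(\widehat u|_{\mathrm{Im}\,\zeta=0}\big)$ — meaningful since $\widehat u(\cdot)\in L^2(\bR,H^{\sigma,s})$ by a standard three-lines argument on the horizontal $L^2$-norms — and shifts the contour of the inverse transform to $\mathrm{Im}\,\zeta=\varsigma a$ (valid for a dense class of data and then by density, the uniform $L^2$ control making the vertical sides of the approximating rectangles negligible along a subsequence) to obtain $e^{\varsigma a y}\pa_y^j u=\cF^{-1}\big((\im(\cdot+\im\varsigma a))^j\widehat u(\cdot+\im\varsigma a)\big)\in L^2(\bR,H^{\sigma,s-j})$; reading the weight split backwards then places $u$ in $\cH^{\sigma,s,a}_\bR$. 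Both the holomorphy and the contour shift are classical (cf.\ \cite{LM}); everything else is the bookkeeping of the symbol equivalences $e^{2a|y|}\leftrightarrow e^{\pm 2ay}$, $(\xi^2+a^2)\leftrightarrow\langle\xi\rangle^2$ and $\max_j\langle\xi\rangle^{2j}\langle k\rangle^{2(s-j)}\leftrightarrow\langle k,\xi\rangle^{2s}$.
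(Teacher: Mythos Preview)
Your proof is correct and follows essentially the same route as the paper: both hinge on the identity $\cF(e^{\varsigma a y}u)(\xi)=\widehat u(\xi+\im\varsigma a)$ together with Plancherel, and both obtain \eqref{normequiv2} from the elementary pointwise comparison $\max_{0\le j\le s}\langle\xi\rangle^{2j}\langle k\rangle^{2(s-j)}\simeq_s\langle k,\xi\rangle^{2s}$ (the paper phrases one direction as Young's inequality, you via the binomial expansion). Your presentation is slightly more streamlined---you use $\cF(e^{\varsigma ay}\pa_y^j u)=(\im(\xi+\im\varsigma a))^j\widehat u(\xi+\im\varsigma a)$ directly, while the paper integrates by parts and carries a binomial sum---and you are more explicit about the Paley--Wiener step (holomorphy of $\zeta\mapsto\widehat u(\zeta)$ and the contour shift), which the paper leaves largely implicit; but these are differences of exposition, not of strategy.
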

\begin{proof}
For any $|\eta| \leq a $, integrating by parts, 
\begin{equation*}
\begin{aligned} 
(-\im \xi)^j \widehat u(\xi\pm\im\eta) 
& = 
 \int_\bR u(y) e^{\mp \eta y} (-\im\xi)^j e^{-\im \xi y} \de y 
 = \int_\bR u(y) e^{\mp \eta y} \pa_y^j (e^{-\im \xi y}) \de y \\
 & = (-1)^j \int_\bR \pa_y^j \big( u(y) e^{\mp \eta y} \big)  e^{-\im \xi y} \de y 
 =\int_\bR \Big(  \sum_{p=0}^j {j\choose p} (\mp\eta)^{j-p} e^{\mp \eta y}   \pa_y^p u(y) \Big)  e^{-\im \xi y}\de y \, . 
 \end{aligned}
 \end{equation*}
By  Plancherel theorem 
we have 
\begin{equation}\label{xiupp}
\| \xi^j  \widehat u(\xi\pm\im \eta) \|_{L^2_\xi(\bR,H^{\sigma,s-j})}^2 
 \lesssim \sum_{p=0}^j \int_{\bR} 
  \| \pa_y^p u (y) \|_{H^{\sigma,s-j}}^2 e^{2|y|a} \de y 
  \lesssim
  \| u \|_{\sigma,s,a,\bR}^2 \, . 
\end{equation}
Conversely, for any $ 0 \leq j \leq s $, we have 
\begin{align}
 \| e^{a|y|}\pa_y^j  u \|_{L^{2}(\bR,H^{\sigma,s-j})}^2 &
 = \int_0^\infty e^{2a y} \| \pa_y^j u(y) \|_{H^{\sigma,s-j}}^2\de y  +  \int_{-\infty}^0 e^{-2a y} 
 \| \pa_y^j u(y) \|_{H^{\sigma,s-j}}^2\de y \notag \\
&\leq   \| e^{a y} \pa_y^j u(y)\|_{L^2(\bR,H^{\sigma,s-j})}^2  + 
\| e^{-a y} \pa_y^j u(y)\|_{L^2(\bR,H^{\sigma,s-j})}^2\, .  \label{daqua}
\end{align}
Now
\begin{align*}
 {\cal F} \big(e^{\pm a y} \pa_y^j u(y)\big)(\xi) &= \int_\bR  
 e^{-\im \xi y} e^{\pm a y} \pa_y^j u(y)
  \de y = (-1)^j \sum_{p=0}^j {j \choose p} (\pm a)^{j-p} (-\im \xi)^{p} \int_\bR e^{-\im \xi y} e^{\pm ay} u(y)\de y 
 \\ &= \sum_{p=0}^j {j \choose p} (\mp a)^{j-p} (\im \xi)^{p} \widehat{u}(\xi\mp \im a) \, ,
\end{align*}
and thus, by Plancharel theorem,
\begin{align}\label{appsop}
\| e^{\pm a y} \pa_y^j u(y) \|_{L^2(\bR,H^{\sigma,s-j})}^2
&  \simeq
\| {\cal F} \big(e^{\pm a y} \pa_y^j u(y)\big)(\xi)\|_{L^2(\bR,H^{\sigma,s-j})}^2  \notag 
\\
& \lesssim_{a,j} \| \langle\xi\rangle^{j} \widehat{u}(\xi\mp \im a)\|_{L^2(\bR,H^{\sigma,s-j})}^2 \, .
\end{align}
We deduce by \eqref{daqua} and \eqref{appsop} that, for any $ j = 0, \ldots, s $,  
\begin{equation}\label{appsop2}
 \| e^{a|y|}\pa_y^j  u \|_{L^{2}(\bR,H^{\sigma,s-j})}^2  \lesssim_{a,s}
 \max_{\varsigma = \pm}\| \langle\xi\rangle^{j} 
 \widehat{u}(\xi + \varsigma \im a)\|_{L^2(\bR,H^{\sigma,s-j})}^2 \, . 
\end{equation}
The estimates \eqref{xiupp} and \eqref{appsop2}  prove \eqref{normequiv1}. 

The last equivalence in \eqref{normequiv2}  follows by  Young's inequality
$ \langle\xi\rangle^{2j}\langle k\rangle^{2(s-j)} \leq \langle \xi\rangle^{2s} + \langle k\rangle^{2s} $.
\end{proof}

We now prove the tame estimate for the product of two functions in $\cH^{\sigma,s,a}_\bR $.

\begin{lem}[\bf Tame]\label{lem:tame} Let $\sigma,a \geq 0 $,  $ s, s_0 \in \bN $ 
such that 
 $ s \geq s_0 > \frac{d+1}{2}$.
Then
\begin{equation}\label{tamecilindro}
\| u v \|_{\sigma, s, a,\bR} \leq
C_s \,  \| u  \|_{\sigma, s, 0,\bR} \| v \|_{\sigma, s_0, a,\bR}
+
C_s \, 
\| u  \|_{\sigma,s_0,0,\bR} \| v \|_{\sigma,s,a,\bR}\, .
\end{equation}
\end{lem}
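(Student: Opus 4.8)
The plan is to work on the Fourier side, in both $ x $ and $ y $, using the characterisation of $ \cH^{\sigma,s,a}_\bR $ in Lemma~\ref{lem:car2}. By a standard density argument it suffices to prove \eqref{tamecilindro} for $ u,v\in C^\infty_c(\bR,C^\infty(\bT^d)) $, the general case following since the bilinear map $ (u,v)\mapsto uv $ then extends by continuity from such functions. For $ u,v $ smooth and compactly supported, and for each sign $ \varsigma\in\{\pm1\} $, the elementary identity $ \widehat w(\cdot+\varsigma\im a)=\mathcal F\big[e^{\varsigma a y}w\big] $ together with the factorisation $ e^{\varsigma a y}(uv)=u\cdot\big(e^{\varsigma a y}v\big) $ and the convolution theorem gives
\begin{equation*}
\widehat{(uv)}_k(\xi+\varsigma\im a)=\sum_{k'\in\bZd}\int_\bR \widehat u_{k-k'}(\xi-\xi')\,\widehat v_{k'}(\xi'+\varsigma\im a)\,\rdj\xi' \, ,
\end{equation*}
and, by Lemma~\ref{lem:car2} (and, when $ a=0 $, simply by Plancherel in $ y $ together with $ \sum_{j=0}^s\langle k\rangle^{2(s-j)}|\xi|^{2j}\simeq_s\langle k,\xi\rangle^{2s} $), it is enough to bound $ \sum_{k\in\bZd}\int_\bR\langle k,\xi\rangle^{2s}e^{2\sigma|k|_1}\big|\widehat{(uv)}_k(\xi+\varsigma\im a)\big|^2\de\xi $.

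Next I would run the usual tame splitting on this weighted convolution. Using $ |k|_1\le|k-k'|_1+|k'|_1 $, so that $ e^{\sigma|k|_1}\le e^{\sigma|k-k'|_1}e^{\sigma|k'|_1} $, together with the elementary bound $ \langle k,\xi\rangle^s\le C_s\big(\langle k-k',\xi-\xi'\rangle^s+\langle k',\xi'\rangle^s\big) $, the quantity $ \langle k,\xi\rangle^s e^{\sigma|k|_1}|\widehat{(uv)}_k(\xi+\varsigma\im a)| $ is pointwise $ \le C_s(A_k(\xi)+B_k(\xi)) $, where $ A $ is the convolution on $ \bZd\times\bR $ of $ f(k,\xi):=\langle k,\xi\rangle^{s}e^{\sigma|k|_1}|\widehat u_k(\xi)| $ with $ h(k,\xi):=e^{\sigma|k|_1}|\widehat v_k(\xi+\varsigma\im a)| $, and $ B $ is the convolution of $ e^{\sigma|k|_1}|\widehat u_k(\xi)| $ with $ \langle k,\xi\rangle^{s}e^{\sigma|k|_1}|\widehat v_k(\xi+\varsigma\im a)| $. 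Young's inequality $ \|g_1*g_2\|_{L^2(\bZd\times\bR)}\le\|g_1\|_{L^1}\|g_2\|_{L^2} $ then gives $ \|A\|_{L^2}\le\|f\|_{L^2}\|h\|_{L^1} $ and $ \|B\|_{L^2}\le\|e^{\sigma|\cdot|_1}\widehat u\|_{L^1}\,\|\langle\cdot\rangle^s e^{\sigma|\cdot|_1}\widehat v(\cdot+\varsigma\im a)\|_{L^2} $, and by Lemma~\ref{lem:car2} the $ L^2 $ factors are $ \simeq_s\|u\|_{\sigma,s,0,\bR} $ and $ \simeq_s\|v\|_{\sigma,s,a,\bR} $ respectively.

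It remains to control the two ``low regularity'' $ L^1 $ factors, and this is the only step requiring real care, since it is where the threshold $ s_0>\tfrac{d+1}{2} $ is forced. For $ g\in\cH^{\sigma,s_0,b}_\bR $, $ b\ge0 $, I would estimate $ \int_\bR|\widehat g_k(\xi+\varsigma\im b)|\,\de\xi $ by Cauchy--Schwarz against $ \langle k,\xi\rangle^{-s_0} $, using $ \int_\bR\langle k,\xi\rangle^{-2s_0}\de\xi\simeq_{s_0}\langle k\rangle^{1-2s_0} $ (finite because $ s_0>\tfrac12 $), and then sum in $ k $ by Cauchy--Schwarz, which produces the factor $ \big(\sum_{k\in\bZd}\langle k\rangle^{1-2s_0}\big)^{1/2} $; this series converges precisely when $ 2s_0-1>d $, and one obtains $ \sum_{k\in\bZd}e^{\sigma|k|_1}\int_\bR|\widehat g_k(\xi+\varsigma\im b)|\,\de\xi\le C_{s_0}\|g\|_{\sigma,s_0,b,\bR} $. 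Applying this with $ (g,b)=(v,a) $ bounds $ \|h\|_{L^1}\lesssim_{s_0}\|v\|_{\sigma,s_0,a,\bR} $, and with $ (g,b)=(u,0) $ bounds $ \|e^{\sigma|\cdot|_1}\widehat u\|_{L^1}\lesssim_{s_0}\|u\|_{\sigma,s_0,0,\bR} $; combining with the previous paragraph yields $ \|A\|_{L^2}\lesssim_s\|u\|_{\sigma,s,0,\bR}\|v\|_{\sigma,s_0,a,\bR} $ and $ \|B\|_{L^2}\lesssim_s\|u\|_{\sigma,s_0,0,\bR}\|v\|_{\sigma,s,a,\bR} $, whence \eqref{tamecilindro} after taking the maximum over $ \varsigma=\pm1 $. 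Apart from this summability point, everything is the routine bilinear bookkeeping, made slightly heavier only by the mixed sum--integral convolution and the analytic weight $ e^{\sigma|k|_1} $.
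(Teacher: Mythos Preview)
Your proof is correct and takes a genuinely different, though equally standard, route from the paper. The paper estimates the convolution in \eqref{I1} by a \emph{frequency-space splitting}: it partitions the integration domain into the regions $A=\{\langle m,\eta\rangle\le 2\langle k,\xi\rangle\}$ and its complement $B$, transfers the weight $\langle m,\eta\rangle^s$ onto the larger factor in each region, and then applies Cauchy--Schwarz with the finite measure $\langle\,\cdot\,\rangle^{-2s_0}$ to close. You instead use the \emph{pointwise weight splitting} $\langle k,\xi\rangle^s\lesssim\langle k-k',\xi-\xi'\rangle^s+\langle k',\xi'\rangle^s$, which produces two global convolutions that are handled by Young's inequality $L^1*L^2\hookrightarrow L^2$; the threshold $s_0>\tfrac{d+1}{2}$ then enters through the $L^1$ bound on the low-regularity factor, via the two successive Cauchy--Schwarz steps you describe. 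Both arguments exploit the same summability $\sum_k\int_\bR\langle k,\xi\rangle^{-2s_0}\de\xi<\infty$ and yield the same constant dependence; your approach is marginally more streamlined in that it avoids carrying the region indicators $\bR_A,\bR_B$, while the paper's splitting makes the high/low frequency interaction more transparent and is closer in spirit to a paraproduct decomposition.
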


\begin{proof}
The product of the functions (cfr. \eqref{Fourierinverse})   
\begin{equation*}
u(x,y) = \sum_{k\in\bZd} \int_\bR \widehat{u}(k, \xi) 
e^{\im (\xi y+k\cdot x)}\rdj \xi \, , \quad  v(x,y) = \sum_{k\in\bZd} \int_\bR \widehat{v}(k,\xi) e^{\im (\xi y+ k \cdot x)}\rdj \xi \, , 
\end{equation*}
is 
$ u v = 
\sum_{m \in \bZd} \int_\bR \widehat{uv}(m,\eta) e^{\im \eta y} \, e^{\im m x } \rdj \eta $
with
\begin{equation}\label{Fourierprod}
\widehat{uv}(m,\eta)= \sum_{k \in \bZd}
\int_\bR
\widehat u(k, \xi) \, 
\widehat v(m-k, \eta - \xi) \rdj \xi \, . 
\end{equation}
By \eqref{normequiv2}, \eqref{Fourierprod} 
we have that 
\begin{align}\label{I1}
\|uv\|_{\sigma,s,a,\bR}^2&\simeq 
\max_{\varsigma\in\{\pm 1\}} \sum_{m\in\bZd} \int_\bR  
| \widehat{uv}(m,\eta+\varsigma \im a)|^2 e^{2\sigma |m|_1} 
\langle m,\eta \rangle^{2s} \rdj\eta  \\
 & \leq \max_{\varsigma\in\{\pm 1\}}  \sum_{m\in\bZd} \int_\bR  \Big( \sum_{k \in \bZd}
\int_\bR
|\widehat u(k, \xi)| \, 
|\widehat v(m-k, \eta - \xi+\varsigma \im a)| 
 e^{\sigma |m|_1} \langle m,\eta \rangle^{s}
 \rdj \xi\Big)^2 \! \rdj\eta. \notag
\end{align}
We split the frequency  space into 
\begin{align*} A&:= \big\{(m,\eta,k,\xi)\in\bZ^d\times\bR\times \bZ^d\times \bR \;:\; \langle m,\eta \rangle \leq 2 \langle k,\xi  \rangle \big\},\\ B&:= \big\{(m,\eta,k,\xi)\in\bZ^d\times\bR\times \bZ^d\times \bR\;:\; \langle m,\eta \rangle > 2 \langle k,\xi  \rangle \big\}   \end{align*}
and, since $e^{\sigma|m|_1}\leq e^{\sigma|k|_1}e^{\sigma|m-k|_1} $, 
we estimate \eqref{I1} as
\begin{equation}\label{smalls-1}
\|uv\|_{\sigma,s,a,\bR}^2\leq I_1+I_2 
\end{equation}
where
{\footnotesize{\begin{align*}
I_1 &:= \sum_{m\in\bZd} \int_\bR  \Big(\!\!\sum_{k \in \bZd}\!
\int_{\bR_A} \!\!\!\!\! e^{\sigma |k|_1}
|\widehat u(k, \xi)| \langle k,\xi \rangle^{s} \, e^{\sigma |m-k|_1}
|\widehat v(m-k, \eta - \xi+\varsigma \im a)| \langle m-k,\eta-\xi \rangle^{s_0} \frac{\langle m,\eta  \rangle^s}{\langle k,\xi\rangle^{s}\langle m-k,\eta-\xi \rangle^{s_0}} \rdj \xi\Big)^2 \!\!\! \rdj\eta, \\ 
I_2 &:= \sum_{m\in\bZd} \int_\bR  \Big(\!\!\sum_{k \in \bZd}\!
\int_{\bR_B}\!\!\!\!\! e^{\sigma |k|_1}
|\widehat u(k, \xi)| \langle k,\xi \rangle^{s_0} \, e^{\sigma |m-k|_1}
|\widehat v(m-k, \eta - \xi+\varsigma \im a)| \langle m-k,\eta-\xi \rangle^{s} \frac{\langle m,\eta  \rangle^s}{\langle k,\xi \rangle^{s_0}\langle m-k,\eta-\xi \rangle^{s}} \rdj \xi\Big)^2 \!\!\! \rdj\eta,
\end{align*}}}
where, given $m,\, k\in\bZ^d $ and $\eta\in\bR$, we denoted
$$ 
\bR_A:= \Big\{ \xi\in\bR\;:\; (m,\eta,k,\xi) \in A \Big\}, \qquad \bR_B:= \Big\{ \xi\in\bR \;:\; (m,\eta,k,\xi) \in B \Big\} \, .
$$ 
Using that,  if $(m,\eta,k,\xi)\in A $ then $ \langle k,\xi \rangle >  \frac12 \langle m,\eta \rangle  $, the Cauchy-Schwarz inequality and exchanging the order of integration we get 
\begin{align}
I_1 & \lesssim_{s} \sum_{m\in\bZd}  \int_\bR  \Big( \sum_{k \in \bZd } 
\int_\bR  e^{2\sigma |k|_1}
|\widehat u(k, \xi)|^2 \langle k,\xi \rangle^{2s} \, e^{2\sigma |m-k|_1}
|\widehat v(m-k, \eta - \xi+\varsigma \im a)|^2 \langle m-k,\eta-\xi \rangle^{2s_0}\rdj \xi \Big) \notag \\ & 
\qquad \times \Big( \sum_{k \in \bZd }\int_{\bR _A} \frac{1}{
\langle m-k,\eta-\xi \rangle^{2s_0}} \rdj \xi \Big) \rdj\eta  \notag \\ 
& \lesssim_{s}
\sum_{k\in\bZd} 
\int_\bR  e^{2\sigma |k|_1}
|\widehat u(k, \xi)|^2 \langle k,\xi \rangle^{2s} 
\Big( \int_\bR   \sum_{m\in\bZd}   \, e^{2\sigma |m-k|_1}
|\widehat v(m-k, \eta - \xi+\varsigma \im a)|^2 \langle m-k,\eta-\xi \rangle^{2s_0}
\rdj\eta \Big) \rdj \xi 
\notag  \\
& \stackrel{\eqref{normequiv2}} {\lesssim_{s}}  \|u\|_{\sigma,s,0,\bR}^2 \|v\|_{\sigma,s_0,a,\bR}^2 \, .  
\end{align}
Note that since $ s_0 \in \bN $, $ s_0 > (d + 1 )/2$ 
then $ s_0 \geq \frac{d+1}{2} + \frac12 $. 
If $(m,\eta,k,\xi)\in B$, i.e. $ \langle k,\xi \rangle < \frac12 \langle m,\eta \rangle  $, then
$ \langle m-k,\eta-\xi \rangle > \frac12 \langle m,\eta \rangle  $,   and one deduces 
similarly that 
$$
I_2 \lesssim_{s,s_0} \|u\|_{\sigma,s_0,0,\bR}^2 \|v\|_{\sigma,s,a,\bR}^2 \, , 
$$
proving, in view of \eqref{smalls-1},  the tame estimate \eqref{tamecilindro}.
\end{proof}

We now prove  \eqref{algebrassa}. Given 
$ u  \in \cH^{\sigma, s, 0}$ and  $ v \in \cH^{\sigma, s, a}$, we consider their extensions
$ {\cal E}_s u $ and $ {\cal E}_s v  $ obtained by Lemma \ref{lem:ext}.
Since the product $ {\cal E}_s u  {\cal E}_s v  $  is an extension of $ u v $ 
we have that  
$ \| u v \|_{\sigma,s,a} \leq \| {\cal E}_s u  {\cal E}_s v \|_{\sigma,s,a,\mathbb R} $.
Thus the tame estimate \eqref{tamecilindro} and 
the equivalence of the norms 
 in 
 \eqref{stimeriflessione},
  imply \eqref{algebrassa}.

  The  proof of \eqref{potenze} follows by  induction on $j$. For $j=1$ it is trivial 
  and if it holds for $j$ then
  \begin{align*}\|u^{j+1} \|_{\sigma,s,a} &
  \stackrel{\eqref{algebrassa}}{\leq} C_s \big(\|u\|_{\sigma,s,a}\|u^j\|_{\sigma,s_0,a}+\|u\|_{\sigma,s_0,a}\|u^j\|_{\sigma,s,a} \big) \\
 &  \stackrel{\eqref{potenze}_j}\leq 
 C_s \big(\|u\|_{\sigma,s,a} (2C_{s_0})^{j-1} \|u\|_{\sigma,s_0,a}^j +
 \|u\|_{\sigma,s_0,a} (2C_s \|u\|_{\sigma,s_0,a})^{j-1} \|u\|_{\sigma,s,a} \big)\\
&  \leq (2 C_s)^j \|u\|_{\sigma,s_0,a}^j \|u\|_{\sigma,s,a}   
 \end{align*}
proving \eqref{potenze} at the step $ j + 1 $.

\section{Proof of the elliptic regularity Lemma \ref{lem:ell5}}\label{sec:sol.ell}

We define  the weighted $ L^2 $-space of functions (cfr. \eqref{defuL2a})
\begin{equation}\label{defL2a}
L^{2, a}:= \left\lbrace p \;\colon\; \bR_{\leq 0} \to \bC \ \ \ : \ \  
  \| p \|_{ L^{2,a}}^2 :=  \int_{-\infty}^0  \, |p(y)|^2 e^{-2a y} \de y < \infty  \right\rbrace \, . 
\end{equation}
For any $\lambda \geq 0 $, define also the integral operators
\begin{equation}
 \label{Tk}
( T_\lambda p)(y):= \int_{-\infty}^y e^{\lambda  (z-y)} \, p(z) \de z  \, , 
\quad
( \widetilde T_\lambda  p)(y):= \int_{y}^0 e^{\lambda  (y-z)} \, p(z) \de z \, , 
\quad \forall y \leq 0 \, . 
 \end{equation} 
 The next technical lemma shows that the operators $T_\lambda, 
 \widetilde T_\lambda $ extend to bounded  operators on $L^{2,a}$.
\begin{lem}\label{lem:T}
Let $s \geq 0, a >0$.  For any $\lambda  \geq 0 $ one has
\begin{align}\label{Tn.est1}
& \|T_\lambda  p \|_{ L^{2,a}} \leq \frac{1}{\lambda +a}\, \|p\|_{ L^{2,a}} \, , \\
&  |(T_\lambda p)(y)| \leq \frac{e^{a y}}{\sqrt{2(\lambda  + a)}} \| p \|_{ L^{2,a}} \,,  \quad \forall y \leq 0 \, , \label{Tn.est1bis}
 \\
&
\| \pa_y^j (T_\lambda  p) \|_{ L^{2,a}} \leq 
C_a \sum_{i =0}^{j-1}  \langle \lambda  \rangle^{j-i-1} \| \pa_y^{i} p\|_{ L^{2,a}} \, , 
\quad \forall j \geq 1 \, , \label{Tn.est2}
\end{align}
where $ \langle \lambda \rangle = \max\{1, |\lambda| \}   $. For  any $ \lambda > a  $,  one has
\begin{align}
& \|\widetilde T_\lambda p \|_{ L^{2,a}} \leq \frac{1}{\lambda -  a} \, \|p\|_{ L^{2,a}} \, ,  \label{Tn.est3} \\
&  |(\widetilde T_\lambda p)(y)| \leq \Big( \frac{e^{2a y} - e^{2\lambda  y}}{2(\lambda  - a)}  \label{Tn.est3bis}
\Big)^{\frac12}\| p \|_{ L^{2,a}} \, , \quad \forall y \leq 0 \, ,   \\
&   
\|  \pa_y^j (\widetilde T_\lambda  p) \|_{ L^{2,a}} \leq  C_a \sum_{i =0}^{j-1} 
\langle \lambda  \rangle^{j-i-1} \| \pa_y^{i} p\|_{ L^{2,a}} \, , \quad \forall j \geq 1 \, . 
\label{TB8}
\end{align} 
\end{lem}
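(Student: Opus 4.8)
\emph{Proof strategy.} The operators $T_\lambda$ and $\widetilde T_\lambda$ are half–line convolutions against a decaying exponential, so the whole lemma reduces to one–dimensional estimates. The first step is to remove the exponential weight: putting $q(y):=e^{-ay}p(y)$, so that $\|p\|_{L^{2,a}}=\|q\|_{L^2(\bR_{\le0})}$, one computes
\begin{equation*}
e^{-ay}(T_\lambda p)(y)=\int_{-\infty}^y e^{(\lambda+a)(z-y)}q(z)\,\de z\,,\qquad
e^{-ay}(\widetilde T_\lambda p)(y)=\int_{y}^0 e^{(\lambda-a)(y-z)}q(z)\,\de z\,.
\end{equation*}
Extending $q$ by zero to $\bR$, the right–hand sides are the restrictions to $\bR_{\le0}$ of $q$ convolved with the kernels $t\mapsto e^{-(\lambda+a)t}\mathbf{1}_{\{t\ge0\}}$ and $t\mapsto e^{(\lambda-a)t}\mathbf{1}_{\{t\le0\}}$, whose $L^1(\bR)$–norms equal $(\lambda+a)^{-1}$ and $(\lambda-a)^{-1}$ (the latter finite precisely because $\lambda>a$). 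Young's convolution inequality then yields \eqref{Tn.est1} and \eqref{Tn.est3} at once.

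For the pointwise bounds \eqref{Tn.est1bis}, \eqref{Tn.est3bis} I would work directly in the defining integrals, applying Cauchy--Schwarz and splitting the exponential so that one factor carries the weight $e^{-az}$ matching $\|p\|_{L^{2,a}}$ while the other is integrated explicitly. For $T_\lambda$ this gives
$$
|(T_\lambda p)(y)|\le\Big(\int_{-\infty}^y e^{2\lambda(z-y)}e^{2az}\,\de z\Big)^{1/2}\|p\|_{L^{2,a}}=\frac{e^{ay}}{\sqrt{2(\lambda+a)}}\,\|p\|_{L^{2,a}}\,,
$$
and the analogous computation $\int_y^0 e^{2\lambda(y-z)}e^{2az}\,\de z=\frac{e^{2ay}-e^{2\lambda y}}{2(\lambda-a)}$ produces exactly \eqref{Tn.est3bis}.

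The derivative bounds \eqref{Tn.est2}, \eqref{TB8} follow by iterating two first–order identities, obtained by differentiating under the integral sign (the variable endpoint contributing the terms $\pm p(y)$):
$$
\pa_y(T_\lambda p)=p-\lambda\,T_\lambda p\,,\qquad \pa_y(\widetilde T_\lambda p)=\lambda\,\widetilde T_\lambda p-p\,.
$$
Iterating these gives the closed formulae
\begin{equation*}
\pa_y^j(T_\lambda p)=\sum_{i=0}^{j-1}(-\lambda)^{j-1-i}\pa_y^i p+(-\lambda)^j T_\lambda p\,,\qquad
\pa_y^j(\widetilde T_\lambda p)=\lambda^j\widetilde T_\lambda p-\sum_{i=0}^{j-1}\lambda^{j-1-i}\pa_y^i p\,;
\end{equation*}
taking $L^{2,a}$–norms, controlling the term carrying $T_\lambda p$ (resp.\ $\widetilde T_\lambda p$) by \eqref{Tn.est1} (resp.\ \eqref{Tn.est3}) --- so that, e.g., $\lambda^j\|T_\lambda p\|_{L^{2,a}}\le\lambda^{j-1}\|p\|_{L^{2,a}}$ --- and bounding each remaining $\lambda^{j-1-i}$ by $\langle\lambda\rangle^{j-1-i}$, one arrives at \eqref{Tn.est2}, \eqref{TB8}. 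All differentiations under the integral sign are first carried out for $p\in C^\infty_c(\bR_{\le0})$ and then extended by density, such functions being dense in $L^{2,a}$ and in the spaces carrying $y$–derivatives.

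No step is genuinely difficult: the points to watch are keeping track of the weight when reducing to plain $L^2$ (so that Young applies), the density/regularization step for the derivative identities, and verifying that the $\lambda$–dependent prefactors telescope to the claimed powers of $\langle\lambda\rangle$ --- immediate for $T_\lambda$ from $\lambda^j/(\lambda+a)\le\lambda^{j-1}$, and for $\widetilde T_\lambda$ with constants uniform in the only range used in the sequel, namely $\lambda=|k|\ge1$ with $a=\tfrac12$, where $\lambda-a\ge\tfrac12$.
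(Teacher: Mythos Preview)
Your proposal is correct and, for the pointwise bounds \eqref{Tn.est1bis}, \eqref{Tn.est3bis} and the derivative identities leading to \eqref{Tn.est2}, \eqref{TB8}, it coincides with the paper's proof essentially verbatim (same Cauchy--Schwarz splitting, same closed formulae for $\pa_y^j T_\lambda p$ and $\pa_y^j\widetilde T_\lambda p$, same reduction via \eqref{Tn.est1}, \eqref{Tn.est3}).

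The one genuine methodological difference is in the $L^{2,a}$ operator bounds \eqref{Tn.est1}, \eqref{Tn.est3}. You remove the weight by the substitution $q=e^{-ay}p$ and then invoke Young's convolution inequality with an explicit $L^1$ kernel; the paper instead keeps the weight, normalizes the exponential kernel to a probability measure, applies Jensen's inequality, and then Fubini. Both routes are standard and yield the identical constants $(\lambda\pm a)^{-1}$; your Young argument is shorter, while the paper's Jensen computation is self-contained (no extension of $q$ to $\bR$ needed). Your closing remark about the constant in \eqref{TB8} being uniform only in the regime $\lambda=|k|\ge1$, $a=\tfrac12$ actually used in the sequel is well observed --- the paper's own derivation via \eqref{Tn.est3} has the same feature, since $\lambda^j/(\lambda-a)$ is not bounded by $C_a\langle\lambda\rangle^{j-1}$ uniformly as $\lambda\downarrow a$.
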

\begin{proof}
We consider first the operator $T_\lambda $.
 Recalling \eqref{Tk} we have
\begin{align*}
\| T_\lambda  p \|_{L^{2,a}}^2 = \int_{-\infty}^0 
\left| \int_{-\infty}^y e^{\lambda  (z-y)} p(z) \de z \right|^2 e^{-2 a y}  \, \de y  \leq
 \int_{-\infty}^0 
\left( \int_{-\infty}^y  e^{(\lambda +a)(z-y)} |p(z)| e^{-a z} \de z \right)^2 \de y \, . 
\end{align*}
Since $\int_{-\infty}^y   e^{(\lambda +a)(z-y)}\de z  = \frac{1}{\lambda +a}$, 
the measure 
$ (\lambda +a)  e^{(\lambda +a)( z-y)} \de z$ is  normalized on the domain $ (-\infty,y) $, and Jensen inequality and exchanging the order of integration implies 
\begin{align*}
\| T_\lambda  p \|_{L^{2,a}}^2 
&\leq 
\frac{1}{(\lambda +a)^2} \int_{-\infty}^0  
 \int_{-\infty}^y  (\lambda +a) e^{(\lambda +a)( z-y)} \,  |p(z)|^2 e^{-2a z}  \de z  \de y
\\
&  = \frac{1}{\lambda +a}\int_{-\infty}^0 e^{-2az} \,  |p(z)|^2 e^{(\lambda +a)z} \left( 
\int_z^0 e^{-(\lambda + a   ) y } \de y\right)  \de z  \\
 & = \frac{1}{(\lambda +a)^2} \int_{-\infty}^0 e^{-2az}  |p(z)|^2 \left( 1- e^{(\lambda +a)z} \right)  \de z \leq  \frac{1}{(\lambda +a)^2} \| p \|_{L^{2,a}}^2
\end{align*}
as $ 1-e^{(\lambda +a)z} \leq 1$ for any  $ z \leq 0 $. 
This proves \eqref{Tn.est1}. Estimate \eqref{Tn.est1bis} descends, recalling \eqref{Tk},
\eqref{defL2a}, and applying Cauchy-Schwarz inequality,
$$
\begin{aligned}
| (T_\lambda  p )(y) | & \leq e^{-\lambda y} \int_{-\infty}^y e^{\lambda  z} |p(z)| \de z \\
&  \leq 
e^{-\lambda y}\left( \int_{-\infty}^y e^{2(\lambda +a)z} \de z \right)^{\frac12} \, 
\Big( \int_{-\infty}^0  \, |p(z)|^2 e^{-2a z} \de z \Big)^{\frac12}
= \frac{e^{ay}}{\sqrt{2(\lambda +a)}} \| p \|_{L^{2,a}} \, .
\end{aligned}
$$
In order to prove the estimate \eqref{Tn.est2}  for  $\pa_y^j (T_\lambda  p)$, we first note
the following inductive formula
$$
\pa_y^j \,  (T_\lambda  p) =  
\sum_{i=0}^{j-1} (-\lambda )^{j-i-1} \,  \pa_y^{i} p   + (-\lambda )^j T_\lambda  p \, , \quad 
\forall j \geq 1 \, . 
$$
Then \eqref{Tn.est2}  follows estimating $ \| T_\lambda  p \|_{L^{2,a}} $ by  \eqref{Tn.est1}.

Now we consider the operator $\widetilde T_\lambda $ in \eqref{Tk}.  Since $\int_y^0   e^{(\lambda -a)(y-z)}\de z  = \frac{1-e^{(\lambda -a)y}}{\lambda -a}$, then, for any $ \lambda  > a $, 
the measure 
$ \frac{\lambda -a}{1-e^{(\lambda -a)y}}  e^{(\lambda -a)(y-z)} \de z$ is  normalized on the domain $ (y,0) $, and, 
by Jensen inequality and exchanging the order of integration, 
\begin{align*}
\| \widetilde T_\lambda p \|_{L^{2,a}}^2 
&\leq  \int_{-\infty}^0 \frac{1-e^{(\lambda-a)y}}{\lambda-a}  \int_y^0 |p(z)|^2e^{-2az} e^{(\lambda-a)(y-z)} \de z\de y \\
&= \frac{1}{\lambda - a} \int_{-\infty}^0 |p(z)|^2 e^{-2az}e^{-(\lambda-a)z} \int_{-\infty}^z \big(e^{(\lambda-a)y}-e^{2(\lambda-a)y} \big) \de y\de z \\
&= \frac{1}{(\lambda-a)^2}\int_{-\infty}^0 |p(z)|^2 e^{-2az}\big(1-\frac12 e^{(\lambda-a)z} \big)\de y \leq \frac{1}{(\lambda-a)^2} \|p\|_{L^{2,a}}^2 
\end{align*} 
as $1-\frac12 e^{(\lambda-a)z}\leq 1 $  for any $z\leq 0$. This proves \eqref{Tn.est3}. The estimate \eqref{Tn.est3bis} follows similarly to \eqref{Tn.est1bis}. Finally \eqref{TB8} 
descends from the identity 
$$ 
\pa_y^j \,  (\widetilde T_\lambda p) =  
- \sum_{i=0}^{j-1} \lambda^{j-i-1} \,  \pa_y^{i} p   + \lambda^j {\widetilde T}_\lambda p 
\, , \quad \forall j \geq 1 \, ,
$$ 
together with the estimate for ${\widetilde T}_\lambda p $ in \eqref{Tn.est3}.
\end{proof}

\begin{proof}[Proof of Lemma \ref{lem:ell5}]
Writing $ u(x,y) = \sum_{k \in \bZ^d} u_k (y) e^{\im k \cdot x} $, 
we expand \eqref{ell5} in Fourier in the $x$ variables, obtaining for any $k \in \bZ^d$ the second order system for $ u_k(y)$, 
\begin{equation}\label{ell6}
\begin{cases}
-|k|^2   u_k(y) + \pa_y^2  u_k (y) =  g_k(y)  \\
 u_k(0) =0 ,  \quad 
\pa_y   u_k(y) \to 0 \mbox{ as } y \to - \infty \, .  
\end{cases}
\end{equation}
\underline{Case $k = 0$:} 
The solution of \eqref{ell6} is, for $ k = 0 $, 
\begin{equation}
\label{u0}
u_0(y) = \int_{-\infty}^y \int_{-\infty}^{y'} g_0(z) \de z \de y'  - \int_{-\infty}^0 \int_{-\infty}^{y'} g_0(z) \de z\de y'
\stackrel{\eqref{Tk}} = \underbrace{(T_0^2 g_0)(y)}_{=\Pi u_0} \underbrace{- (T_0^2 g_0)(0)}_{=u_0 - \Pi u_0} \, . 
\end{equation}
First note  that, since $ g \in \cH^{\sigma,s,a} $, then, by  \eqref{migliore4},
\begin{equation}\label{payjg0}  
\| \pa_y^j g_0 \|_{L^{2,a}} \leq \| g \|_{\sigma,s,a} \, , \quad \forall  j = 0, \ldots, s \, . 
\end{equation}  
By 
\eqref{Tn.est1}, \eqref{Tn.est1bis}, 
the function $ \Pi u_0 = T_0^2 g_0 $ and the constant  
$ u_0 - \Pi u_0 = -   (T_0^2 g_0)(0)  $ satisfy 
\begin{equation}
\label{stimau0} 
 \begin{aligned}
& \| \Pi u_0  \|_{L^{2,a}}  
  \leq  a^{-2} \|g_0\|_{L^{2,a}} 
  \leq a^{-2} \| g \|_{\sigma,s,a} \, , \ 
 | u_0 - \Pi u_0  | 
\leq \frac{1}{\sqrt{2} a^{3/2} } \, \| g_0\|_{L^{2,a}} \leq \frac{1}{\sqrt{2} \, a^{3/2} } \| g \|_{\sigma,s,a}  \ . 
 \end{aligned}
 \end{equation}
 Thus $u_0 \in \bC \oplus L^{2,a}$.  
  Moreover    $\pa_y u_0 = T_0 g_0 $ and  \eqref{Tn.est1bis}, \eqref{Tn.est1} imply that
 \begin{equation}\label{1payu}
 |(\pa_y u_0)(y)| \leq \frac{e^{a y}}{\sqrt{2a}} \| g_0 \|_{ L^{2,a}} \, , \quad 
\|\pa_y u_0\|_{L^{2,a}} \leq a^{-1} \|g_0\|_{L^{2,a}} 
 \leq a^{-1} \| g \|_{\sigma,s,a}   \, . 
\end{equation} 
In addition,  since $ \pa_y^2  u_0 = g_0 $, 
 we get   $\pa_y^j u_0 = \pa_y^{j-2} g_0$, for any $ j \geq 2$, and then, by \eqref{payjg0}, 
\begin{equation}\label{j+20}
\| \pa_y^j u_0 \|_{L^{2,a}}  =  \|\pa_y^{j-2} g_0\|_{L^{2,a}}   \leq \| g \|_{\sigma, s, a} \, , 
\quad \forall 2 \leq j \leq s + 2 \, . 
\end{equation}
The bounds
\eqref{stimau0},
\eqref{1payu}, 	\eqref{j+20},  and recalling \eqref{migliore4}, imply that  
\begin{equation}
\label{u.est0}
| u_0 - \Pi u_0|^2 + \| \Pi u_0 \|_{L^{2,a}}^2 + 
\sum_{j=1}^{s+2} 
\| \pa_y^j u_0 \|_{L^{2,a}}^2
 \leq C_{a} \| g \|_{\sigma, s, a}^2 \, . 
\end{equation}
\noindent\underline{Case $k \neq 0$:} 
The solution of the linear equation \eqref{ell6} 
is (by the variation of constants method)
\begin{align}
\notag
u_k(y) & =
- \frac{1}{2|k|} \int\limits_{-\infty}^y e^{|k|(z-y)} \, g_k(z) \, \de z 
- \frac{1}{2|k|} \int\limits_y^0 e^{|k|(y-z)} g_k(z) \, \de z 
+\frac{e^{|k| y} }{2|k|} \int\limits_{-\infty}^0 g_k(z) e^{|k|z} \de z  
 \\ \label{uk_formula}
& \stackrel{\eqref{Tk}} = - \frac{1}{2|k|} (T_{|k|}g_k)(y) 
- \frac{1}{2|k|} (\widetilde T_{|k|}g_k) (y) 
+ \frac{ e^{|k| y}}{2|k|} (T_{|k|}g_k)(0)  \, .
\end{align}
By \eqref{migliore4}, each 
$ g_k \in L^{2,a} $ and $ \| g_k \|_{L^{2,a}} \leq \| g \|_{\sigma,s,a} $. 
Thus by  $ \| e^{|k|y}\|_{L^{2,a}} = 1/ \sqrt{2(|k|-a)} $, Lemma \ref{lem:T},  
and recalling that $ a \in (0,1) $,
 we bound \eqref{uk_formula} for any $ |k| \geq 1$,  as 
\begin{equation}\label{stimaukf}
\| u_k\|_{L^{2,a}} \leq 
\frac{1}{2|k|} \|T_{|k|} g_k\|_{L^{2,a}}   + 
\frac{ 1 }{2|k|}\|\widetilde T_{|k|} g_k\|_{L^{2,a}}
+ \frac{ |(T_{|k|}g_k)(0) |}{2|k| \sqrt{2(|k|-a)}} |    \lesssim_a
 \frac{1}{|k|^2} \|g_k\|_{L^{2,a}}   \, . 
\end{equation}
Thus each $ u_k \in L^{2,a} $, $ k \neq 0 $. 
Note also that  
$ \pa_y u_k (y) =  \frac{1}{2} (T_{|k|}g_k)(y) 
-
\frac{1}{2} (\widetilde T_{|k|}g_k) (y) 
+
\frac{ e^{|k| y}}{2} (T_{|k|}g_k)(0) $
satisfies, by \eqref{Tn.est1bis} and \eqref{Tn.est3},
$$
\begin{aligned}
|\pa_y u_k (y)| & \leq  \frac{1}{2} |(T_{|k|}g_k)(y)| 
+
\frac{1}{2} |(\widetilde T_{|k|}g_k) (y)| 
+
\frac{ e^{|k| y}}{2} |(T_{|k|}g_k)(0)| \\
& \leq
 \frac12 \frac{e^{a y}}{\sqrt{2(|k|+a)}} \| g_k \|_{ L^{2,a}}+
\frac12 \Big( \frac{e^{2a y} - e^{2|k| y}}{2(|k| - a)} 
\Big)^{\frac12}\| g_k \|_{ L^{2,a}} + 
\frac{ e^{|k| y}}{2} \frac{1}{\sqrt{2(|k|+a)}} \| g_k \|_{ L^{2,a}}
\end{aligned}
$$
thus tends to $  0 $ as $  y \to - \infty $.

By \eqref{stimaukf} and recalling  \eqref{migliore4} we deduce that 
\begin{equation}\label{parzap}
\sum_{k\neq 0 } e^{2\sigma |k|_1}\, \langle  k \rangle^{2(s+2)} \,  
\| u_k \|_{L^{2,a}}^2  \lesssim_a  \|g \|_{\sigma, s,a}^2 
\end{equation}
and we conclude that  
$ u = u_0 (y) + \sum_{k \neq 0} u_k (y) e^{\im k \cdot x} $ 
is in $ \bC \oplus L^{2,a}(\bR_{\leq 0}, H^{\sigma,s+2}) $
with
\begin{equation} \label{u.estk1}
\| \Pi u \|_{ L^{2,a}(\bR_{\leq 0}, H^{\sigma,s+2})}^2 
= \| \Pi u_0 
 \|_{L^{2,a}}^2 +
\sum_{k 
\neq 0 }  e^{2\sigma |k|_1}\, \langle  k \rangle^{2(s+2)} \,   
\|  u_k \|_{L^{2,a}}^2  
\stackrel{\eqref{stimau0},\eqref{parzap}} \leq C_a \|g \|_{\sigma, s,a}^2 \, . 
\end{equation}
Now we estimate the derivatives $ \pa_y^j u $, $j \geq 1$.
Differentiating \eqref{uk_formula}  we get, for any $ j \geq 1 $,  
$$
\pa_y^j u_k(y) = - \frac{1}{2|k|} \pa_y^j (T_{|k|}g_k)(y) -
\frac{1}{2|k|}\pa_y^j  (\widetilde T_{|k|} g_k) 
+
 \, \frac12 |k|^{j-1}\, e^{|k| y} (T_{|k|}g_k)(0)  
$$
and, using  $ \| e^{|k|y}\|_{L^{2,a}} = 1/ \sqrt{2(|k|-a)} $, Lemma \ref{lem:T}, $ a \in (0,1) $,  we get, for any $ |k| \geq 1 $, 
\begin{align}
\notag
\| \pa_y^j u_k\|_{L^{2,a}} & \leq 
\frac{1}{2|k|}\|\pa_y^j T_{|k|} g_k \|_{L^{2,a}}  + 
\frac{1}{2|k|}\|\pa_y^j \widetilde T_{|k|} g_k\|_{L^{2,a}}
+ C_a |k |^{j-\frac32}    \, |(T_{|k|}g_k)(0) | \\
& \stackrel{\eqref{Tn.est2}, \eqref{TB8}, \eqref{Tn.est1bis}}  
{\lesssim_a} \sum_{i =0}^{j-1}  \langle k \rangle^{j-i-2} \,  \| \pa_y^{i} g_k \|_{L^{2,a}} \, . \label{pauj} 
\end{align}
 By \eqref{pauj}  we conclude that, for any $ 1 \leq j \leq s + 1 $, 
\begin{equation}\label{u.est2}
\begin{aligned}
\sum_{k \neq 0 } e^{2\sigma |k|_1}  \langle  k \rangle^{2(s+2 -j)} 
\| \pa_y^j u_k\|_{L^{2,a}}^2 
&  \lesssim_a  \sum_{i= 0}^{j-1}
  \sum_{k \neq 0}  e^{2\sigma |k|_1}  \, 
 \langle  k \rangle^{2(s+2 -j)}  \langle  k \rangle^{2(j-i-2)} \| \pa_y^i g_k \|_{L^{2,a}}^2 \\
 &  \lesssim_a   \sum_{i= 0}^{j-1}
  \sum_{k \neq 0} 
   e^{2\sigma |k|_1}  \, 
 \langle  k \rangle^{2(s-i)}  \| \pa_y^i g_k \|_{L^{2,a}}^2 
   \stackrel{\eqref{migliore4}}\leq C_{a} \| g \|_{\sigma, s, a }^2 \, .
  \end{aligned}
\end{equation}
We finally estimate the last derivative $\partial_y^{s+2} u_k $. 
Differentiating \eqref{ell6} with respect to $ \partial_y^s $, we get  
$$
\partial_y^{s+2} u_k (y) = \partial_y^s g_k (y) + |k|^2 \partial_y^s u_k (y)
$$ 
and then 
\begin{align}
\sum_{k \in \bZ^d} e^{2 \sigma |k|_1} \| \partial_y^{s+2} u_k \|_{L^{2,a}}^2 
& \lesssim
\sum_{k \in \bZ^d} e^{2 \sigma |k|_1}  
\Big( \|  \partial_y^s g_k \|_{L^{2,a}}^2 + |k|^4 \| \partial_y^s u_k \|_{L^{2,a}}^2\Big)  \notag \\
& \stackrel{\eqref{migliore4},\eqref{pauj}} {\lesssim_a} 
\| g \|_{\sigma,s,a}^2 + \sum_{k \in \bZ^d} e^{2 \sigma |k|_1}   \langle k \rangle^4 
\sum_{i =0}^{s-1}  \langle k \rangle^{2(s-i-2)} \,  \| \pa_y^{i} g_k \|_{L^{2,a}}^2 \notag \\
&   \lesssim_a 
\| g \|_{\sigma,s,a}^2 + 
\sum_{i =0}^{s-1}  \sum_{k \in \bZ^d} e^{2 \sigma |k|_1}   \langle k \rangle^{2(s-i)} \,  \| \pa_y^{i} g_k \|_{L^{2,a}}^2 
\stackrel{\eqref{migliore4}} \leq C_{a} \| g \|_{\sigma,s,a}^2 \, . \label{lastder2}
\end{align}
Recalling \eqref{migliore}, summing the estimates \eqref{u.est0}, \eqref{u.estk1}, 
\eqref{u.est2} and \eqref{lastder2}, 
we deduce that  $u \in \bC \oplus \cH^{\sigma, s+2, a} $ 
and $ \| u \|_{\sigma, s+2,a} = | u - \Pi u | + \| \Pi u \|_{\sigma,s+2,a} \leq  C_{s,a} \| g \|_{\sigma,s,a} $. 
Lemma \ref{lem:ell5} is proved. 
\end{proof}

\begin{footnotesize}

 \end{footnotesize}

 International School for Advanced Studies (SISSA), Via Bonomea 265, 34136, Trieste, Italy. 
 \textit{Emails: } \texttt{berti@sissa.it},  \texttt{alberto.maspero@sissa.it}, \texttt{paolo.ventura@sissa.it}

\end{document}